\newcommand{\xMapsto}[2][]{\ext@arrow 0599{\Mapstofill@}{#1}{#2}}
\def\Mapstofill@{\arrowfill@{\Mapstochar\Relbar}\Relbar\Rightarrow}
\newcommand{\Z}{\mathbb{Z}}
\newcommand{\R}{\mathbb{R}}
\newcommand{\C}{\mathbb{C}}
\newcommand{\HH}{\mathbb{H}}
\newcommand{\cA}{\mathcal{A}}
\newcommand{\cC}{\mathcal{C}}
\newcommand{\cF}{\mathcal{F}}
\newcommand{\psl}{\text{PSL}_{2}{\left(\mathbb{Z}\right)}}
\def\res{\mathop{\text{Res}}}
\def\ord{\text{ord}}
\newcommand{\pr}[1]{{\left(#1\right)}}
\newcommand{\prb}[1]{{\left[#1\right]}}
\newcommand{\prs}[1]{{\left\{#1\right\}}}
\newcommand{\floor}[1]{{\left\lfloor #1\right\rfloor}}
\newcommand{\msum}[2]{\sum_{\begin{subarray}{c} 
   #1 \\ 
   #2  \\
 \end{subarray} }}
\newcommand{\sub}{\subset}
\numberwithin{equation}{section}
\newtheorem{thm}{Theorem}[section]
\newtheorem{conj}[thm]{Conjecture}
\newtheorem{corr}[thm]{Corollary}
\newtheorem{prop}[thm]{Proposition}
\newtheorem{lemma}[thm]{Lemma}
\newtheorem*{claim*}{Claim}
\theoremstyle{definition}
\theoremstyle{remark}
\newtheorem*{remark}{Remark}
\title{On the Zeros of the Miller Basis of Cusp Forms}
\author{Roei Raveh}
\address{School of Mathematical Sciences\\
Tel Aviv University\\
Tel Aviv 69978, Israel}
\email{roeiraveh@mail.tau.ac.il}
\date{}
\begin{document}
\begin{abstract}
    We study the zeros of cusp forms in the Miller basis whose vanishing order at infinity is a fixed number $m$. We show that for sufficiently large weights, the finite zeros of such forms in the fundamental domain, all lie on the circular part of the boundary of the fundamental domain. We further show and quantify an effective bound for the weight, which is linear in terms of $m$.
\end{abstract}
\maketitle
\section{Introduction}
Let $k\ge0$ be an even integer, and let $M_{k}$ denote the linear space of holomorphic modular forms of weight $k$ for the modular group $\Gamma = \psl $. Each modular form $f$ has a $q$-expansion (i.e.\  Fourier series),
\[f(\tau) = \sum_{n=n_{0}}^{\infty}a_{f}\pr{n}q^{n}, \quad q= e^{2\pi i\tau}\]
where $\tau\in\HH$, and $n_{0}=\ord_{\infty}\pr{f} $.\\

For any nonzero $f\in M_{k}$, we have the valence formula (see \cite{serre}):
\begin{equation}\label{valence}
    \ord_{\infty}\pr{f} + \frac{1}{2}\ord_{i}\pr{f} + \frac{1}{3}\ord_{\rho}\pr{f} + \sum_{z\in\cF\smallsetminus\{i,\rho\}} \ord_{z}\pr{f} =\frac{k}{12},
\end{equation}
where $\rho = e^{\sfrac{2\pi i}{3}} = -\frac{1}{2}+\frac{\sqrt{3}}{2}i$, and $\cF$ is the fundamental domain
\[\cF = \prs{z\in\HH:\abs{z}\ge 1, -\frac{1}{2}\le \Re\pr{z}\le 0}\bigcup \prs{z\in\HH:\abs{z}>1, 0<\Re\pr{z}<\frac{1}{2}},\]
as seen in Figure \ref{fig: fundomain}.
\begin{figure}[ht]
    \centering
    \begin{tikzpicture}[scale = 2] 
    \filldraw (0,0) circle (0.75pt) node[below] {$0$};
    \filldraw (1,0) circle (0.75pt) node[below right] {$1$};
    \filldraw (-1,0) circle (0.75pt) node[below left] {$-1$};
    \filldraw (-0.5,0) circle (0.75pt) node[below] {$-\frac{1}{2}$};
    \filldraw (0.5,0) circle (0.75pt) node[below] {$\frac{1}{2}$};
    \draw[thick,->] (-1.25,0) -- (1.25,0) node[right] {};
    \draw[fill=gray!30,draw =white] (-0.5,2.6) -- (-0.5,0.866) arc (120:60:1) -- (0.5,2.6);
    \draw[line width=0.15mm,loosely dashed] (1,0) arc (0:60:1);
    \draw[line width=0.15mm,dashed] (0.5,0.866) arc (60:90:1);
    \draw[line width=0.2mm] (0,1) arc (90:120:1);
    \draw[line width=0.15mm,loosely dashed](-0.5,0.866) arc (120:180:1);
    \draw[line width=0.2mm] (-0.5,0.866) -- (-0.5,2.6);
    \draw[loosely dashed] (0.5,0) -- (0.5,.866);
    \draw[dashed] (.5,.866) -- (0.5, 2.6);
    \draw[loosely dashed] (-0.5,0) -- (-0.5,.866);
    \filldraw (0,1) circle (0.75pt) node[above] {$i$};
    \filldraw (-0.5,0.866) circle (0.75pt) node[left] {$\rho$};
    \filldraw[gray!30] (0,1.8) circle (0.75pt) node {\color{black}$\cF$};
    \end{tikzpicture}
    \caption{The fundamental domain $\cF$}
    \label{fig: fundomain}
\end{figure}

Writing $k=12\ell +k'$ where $k'\in\{0,4,6,8,10,14\}$, we obtain
\begin{equation}
    \ell \ge \ord_{\infty}\pr{f}.
\end{equation}
Those formulas provide a powerful tool for studying modular forms via their zeros. The valence formula \eqref{valence} implies that for a nonzero modular form of weight $k$, there are $\frac{k}{12}-\ord_{\infty}(f)+O\pr{1}$ zeros in the fundamental domain $\cF$. The space $M_{k}$ is finite-dimensional and is spanned by the Eisenstein series \eqref{esn} and the space of cusp forms $S_{k}$, with $\dim S_{k}=\ell$. The zeros of the Eisenstein series were studied in 1970 by F. Rankin and P. Swinnerton-Dyer \cite{sdr}. In their paper, they proved that all the zeros of the Eisenstein series in the fundamental domain lie on the arc $\cA=\left\{e^{i\theta}:\frac{\pi}{2}\le \theta\le \frac{2\pi}{3}\right\}$ and become uniformly distributed in $\cA$ as $k\to\infty$. This argument of Rankin and Swinnerton-Dyer was used to prove similar results, for instance, in the work of R. Rankin on the zeros of Poincar{\'e} series \cite{rankin} and in the work of S. Gun on the zeros of certain linear combinations of Poincar{\'e} series \cite{gun}. For different types of results about zeros of various modular forms, see \cite{holowinsky2010mass}, \cite{kimmel}, \cite{RVY}, \cite{Ringeling}, \cite{rudnick2005}, \cite{rudnick2023}, \cite{Xue-Zhu}.\\

This paper will discuss the zeros of the Miller basis of modular forms. The elements of the Miller basis $\left\{g_{k,m}\right\}_{m=0}^{\ell}$ are uniquely defined for every $0\le m\le\ell$ by requiring \[g_{k,m}(\tau)=q^{m}+O\left(q^{\ell+1}\right).\]
The Miller basis forms a canonical basis of $M_{k}$ in the sense that it is a basis of reduced row echelon form.
In particular, we are interested in the cusp forms of the Miller basis $g_{k,1},\ldots,g_{k,\ell}$.\\

 W. Duke and P. Jenkins \cite{dukejen} showed that the \say{gap form} $g_{k,0} = 1+O\pr{q^{\ell+1}}$ has all its zeros on the arc $\cA$ in the fundamental domain. This is not generally true for $g_{k,m}$ with $m\ge 1$; for example, as pointed out in \cite{dukejen}, $g_{132,9}$ doesn't have all its zeros on the arc $\cA$. Nevertheless, an asymptotic result can be achieved:
\begin{thm}\label{mr}
    Fix $m\ge 1$. There exist $\alpha,\beta>0$ so that if $\ell > \alpha m+\beta$, then all the zeros of $g_{k,m}$ in the fundamental domain lie on the arc $\left\{e^{i\theta}:\frac{\pi}{2}\le \theta\le \frac{2\pi}{3}\right\}$, and become uniformly distributed on the arc as $k\to \infty$.
\end{thm}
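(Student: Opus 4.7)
The plan is to extend the Rankin--Swinnerton-Dyer argument to the Miller basis. Parameterising the arc as $\tau=e^{i\theta}$ with $\theta\in[\pi/2,2\pi/3]$, the real Fourier coefficients of $g_{k,m}$, together with the modular transformation $g_{k,m}(-1/\tau)=\tau^{k}g_{k,m}(\tau)$ and the identity $-1/e^{i\theta}=e^{i(\pi-\theta)}$, imply that
\[
F_{k,m}(\theta)\;:=\;e^{ik\theta/2}g_{k,m}(e^{i\theta})\;\in\;\mathbb{R}.
\]
Expanding $g_{k,m}(\tau)=q^{m}+\sum_{n\ge\ell+1}a_{k,m}(n)q^{n}$ and writing $q^{n}|_{\tau=e^{i\theta}}=e^{-2\pi n\sin\theta}e^{2\pi i n\cos\theta}$, taking real parts yields
\[
F_{k,m}(\theta)\;=\;e^{-2\pi m\sin\theta}\cos\!\bigl(\tfrac{k\theta}{2}+2\pi m\cos\theta\bigr)\;+\;R_{k,m}(\theta),
\]
where $R_{k,m}(\theta)=\sum_{n\ge\ell+1}a_{k,m}(n)\,e^{-2\pi n\sin\theta}\cos(\tfrac{k\theta}{2}+2\pi n\cos\theta)$.

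Next I would analyse the main term. Its phase $\varphi(\theta)=k\theta/2+2\pi m\cos\theta$ has derivative $\varphi'(\theta)=k/2-2\pi m\sin\theta$, which is strictly positive and of size $\asymp k$ once $\ell$ exceeds a constant multiple of $m$; hence $\varphi$ is strictly increasing on $[\pi/2,2\pi/3]$, with total increment $\varphi(2\pi/3)-\varphi(\pi/2)=k\pi/12-\pi m=(\ell-m)\pi+k'\pi/12$. Accordingly $\cos\varphi(\theta)$ has $\ell-m+O(1)$ sign changes on $\cA$, which matches (once one absorbs any forced zeros at the CM points $i$ and $\rho$ coming from the factor $E_{k'}$) the total number of zeros of $g_{k,m}$ in $\cF$ predicted by the valence formula \eqref{valence}. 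Provided the tail $R_{k,m}$ is shown to be smaller in absolute value than $e^{-2\pi m\sin\theta}$ at each would-be sign change, the intermediate value theorem will produce enough genuine sign changes of $F_{k,m}$ on $\cA$ to account for \emph{every} zero of $g_{k,m}$ in $\cF$, which forces them all to lie on $\cA$.

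The main obstacle is controlling $R_{k,m}$ uniformly in $\theta$, and this is exactly where the hypothesis $\ell>\alpha m+\beta$ enters. Since $\sin\theta\ge\sqrt{3}/2$ throughout $\cA$, each tail summand carries a damping factor $e^{-\pi n\sqrt{3}}$, which is extremely small for $n\ge\ell+1$. I would combine this with a polynomial bound on $|a_{k,m}(n)|$ in the three variables $k,m,n$, obtained either from an explicit expression for $g_{k,m}$ as $\Delta^{m}E_{k'}$ times a Faber-type polynomial in $j$, or from the Poincar\'e/Petersson-type formulas for the Miller basis used by Duke--Jenkins, to establish an estimate of the shape $|R_{k,m}(\theta)|\ll e^{-2\pi m\sin\theta}\cdot e^{-c(\ell-m)}$ with some explicit $c>0$. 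Tracing through the implied constants yields the linear dependence $\ell>\alpha m+\beta$; obtaining a coefficient bound clean enough to make this effective is the step I expect to be the most technical, because the bound on $|a_{k,m}(n)|$ has to be sharp enough to beat the (a priori Hecke-type) growth in $n$ and yet track the dependence on $k$ and $m$ explicitly.

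For the equidistribution statement, once $R_{k,m}$ is negligible the zeros $\theta_{1}<\dots<\theta_{N}$ of $F_{k,m}$ on $\cA$ lie within $O(k^{-1})$ of the solutions of $\varphi(\theta)=\pi/2+j\pi$. Since $\varphi'(\theta)=k/2+O_{m}(1)$, the spacing $\theta_{j+1}-\theta_{j}$ is asymptotically $2\pi/k$ uniformly in $j$, so the empirical measure $\tfrac{1}{N}\sum_{j}\delta_{\theta_{j}}$ converges weakly to the normalised arc-length measure on $\cA$ as $k\to\infty$.
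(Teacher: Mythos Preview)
Your high-level strategy---extract a main oscillation, bound the remainder, count sign changes against the valence formula, and read off equidistribution from the nearly linear phase---matches the paper exactly, and your sign-change and equidistribution arguments are correct. The gap is in the remainder estimate. The Fourier coefficients $a_{k,m}(n)$ are \emph{not} polynomially bounded in $k,m,n$, and in fact your desired estimate $|R_{k,m}(\theta)|\ll e^{-2\pi m\sin\theta}e^{-c(\ell-m)}$ is false. What actually holds on the arc (and is what the paper proves) is $e^{ik\theta/2}g_{k,m}(e^{i\theta})\approx 2\,e^{-2\pi m\sin\theta}\cos\varphi(\theta)$, so the $q$-tail $\sum_{n\ge\ell+1}a_{k,m}(n)q^{n}$ carries the ``other half'' of that $2\cos$ and hence has magnitude $\asymp e^{-2\pi m\sin\theta}$, the \emph{same} size as your main term, not exponentially smaller. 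Since each $|q|^{n}\le e^{-\pi\sqrt{3}\,n}$ on the arc, this forces some $|a_{k,m}(n)|$ with $n$ near $\ell+1$ to be exponentially large in $\ell-m$; neither a polynomial bound nor the Deligne-type bound $n^{(k-1)/2}$ (which in any case applies to Hecke eigenforms, not to $g_{k,m}$) is of any use here. Your fallback reference to ``Poincar\'e/Petersson-type formulas used by Duke--Jenkins'' also misidentifies their method.

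What the paper does, following Duke--Jenkins, is to bypass the Fourier coefficients entirely. Applying Cauchy's formula to the Faber polynomial $F_{k,m}$ gives the representation
\[
g_{k,m}(z)=\int_{-1/2+iA}^{1/2+iA}\frac{\Delta^{\ell}(z)E_{k'}(z)E_{14-k'}(\tau)}{\Delta^{\ell+1}(\tau)\bigl(j(\tau)-j(z)\bigr)}\,e^{2\pi i m\tau}\,d\tau,
\]
and one then shifts the horizontal contour downward. The residues at $\tau=z$ and $\tau=-1/z$ \emph{together} produce the correct main term $2\cos\varphi(\theta)$ (this is where your missing factor of $2$ lives), and the remaining integral at a fixed height $A''<\sqrt{3}/2$ is bounded pointwise by a constant times $\bigl|\Delta(e^{i\theta})/\Delta(x+iA'')\bigr|^{\ell}\,e^{2\pi m(\sin\theta-A'')}$. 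The uniform inequality $\bigl|\Delta(e^{i\theta})/\Delta(x+iA'')\bigr|\le e^{-c}<1$ is what supplies the genuine exponential gain $e^{-c\ell}$ and yields the linear threshold $\ell>\alpha m+\beta$ with $\alpha=\pi/(2\log(10/7))$. A further subtlety, invisible in your outline, is that as $\theta\to 2\pi/3$ extra poles at $-1/(z\pm 1)$ approach the lowered contour; this forces a case split in the height $A''$ and an additional residue contribution that must separately be shown to be at most $1$.
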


In \S \ref{background} we discuss some needed background on modular forms and the Miller basis.
In \S \ref{smr} we will prove Theorem \ref{mr}.
In \S \ref{What are the bounds} we quantify those bounds and show we can choose $\alpha = 4.5$ and $\beta=9.5$.
Finally, in \S \ref{m=1} we investigate the behavior of $g_{k,1}$, and prove the following:
\begin{thm}\label{the case m=1}
    For every $\ell\ge 1$, all the zeros of $g_{k,1}(\tau)=q+O\pr{q^{\ell+1}}$ in the fundamental domain lie on the arc. 
\end{thm}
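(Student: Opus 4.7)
My plan is to specialize and refine the Rankin--Swinnerton-Dyer argument of \S\ref{smr} to $m=1$, sharpening its error estimate so that it applies for every $\ell\ge 1$ and not only once $\ell$ exceeds $4.5 m + 9.5$. Throughout, the reformulation I keep in mind is the factorization $g_{k,1}=\Delta^\ell E_{k'} P_1(j)$, where $P_1\in\mathbb{Z}[x]$ is monic of degree $\ell-1$ determined by the Miller normalization: since $\Delta$ has no zeros in $\HH$, $E_{k'}$ vanishes only at $i$ or $\rho$, and $j:\cF\to\mathbb{C}$ sends $\cA$ bijectively to $[0,1728]$, the theorem is equivalent to the statement that $P_1$ has all $\ell-1$ of its roots in the interval $[0,1728]$.

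As in \S\ref{smr}, I would set
\[
F(\theta) := e^{ik\theta/2}\,g_{k,1}(e^{i\theta}), \qquad \theta\in\left[\tfrac{\pi}{2},\tfrac{2\pi}{3}\right],
\]
and observe that $F$ is real-valued because the Miller basis has rational Fourier coefficients and $-1/\tau=-\bar\tau$ on the unit circle. Using $a_1=1$ and $a_2=\cdots=a_\ell=0$, decompose $F=M+E$ with $M(\theta)=e^{-2\pi\sin\theta}\cos\!\left(\tfrac{k\theta}{2}+2\pi\cos\theta\right)$ and $E(\theta)=\sum_{n\ge\ell+1}a_n e^{-2\pi n\sin\theta}\cos\!\left(\tfrac{k\theta}{2}+2\pi n\cos\theta\right)$. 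The phase $\phi(\theta)=k\theta/2+2\pi\cos\theta$ sweeps $k\pi/12-\pi$ radians across $\cA$ (monotonically once $k\ge 14$), yielding exactly the $\ell-1+O(1)$ sign changes of $M$ that match the finite zeros of $g_{k,1}$ forced by the valence formula.

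The main obstacle is showing $|E(\theta)|<|M(\theta)|$ at every peak of $M$ (where $|M|\ge e^{-2\pi}$), \emph{uniformly in} $\ell\ge 1$. Since each tail term carries the factor $e^{-2\pi n\sin\theta}\le e^{-\pi\sqrt{3}\,n}$, the task reduces to an effective bound on $|a_n|$ for $n\ge\ell+1$ that beats this exponential decay regardless of $\ell$. The Deligne bound alone is too weak here; one must exploit structural features specific to $m=1$. My plan is to use the defining recursion for $P_1$: each $a_n$ is a known polynomial in the $\ell$ coefficients of $P_1$ together with the (explicitly bounded) coefficients of $\Delta^\ell E_{k'}$ and the powers of $j$, yielding a bound on $|a_{\ell+1}|,|a_{\ell+2}|,\dots$ that grows only polynomially in $\ell$--more than enough to dominate $e^{-\pi\sqrt{3}(\ell+1)}$ for all $\ell$ beyond a small, effectively computable range.

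The remaining tasks are routine: the corner points $\tau=i$ and $\tau=\rho$ are handled via the parity of $k$ modulo $4$ and $6$; the base case $\ell=1$ is free because $g_{12,1}=\Delta$ has no zeros in $\HH$; and any finitely many small-$\ell$ cases slipping through the uniform tail bound are dispatched by directly computing $P_1$ and verifying that its $\ell-1$ roots lie in $[0,1728]$. The hard step is precisely the uniform tail bound, since the weaker form of this estimate is exactly what forces the threshold $\ell>4.5m+9.5$ in Theorem~\ref{mr}.
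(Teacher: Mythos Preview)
Your plan overcomplicates matters and leaves its central step unfinished. The paper's proof is far more direct: it simply specializes Theorem~\ref{QuanMR} to $m=1$, which immediately gives that all zeros of $g_{k,1}$ lie on $\cA$ once $\ell>14$. There is no attempt to sharpen the analytic argument of \S\ref{smr} to work uniformly for all $\ell$; instead, the finitely many remaining cases $1\le\ell\le 14$ (over all six residues $k'$) are handled exactly by your own fallback---computing the Faber polynomial $F_{k,1}$ of degree $\ell-1$ and numerically verifying that its roots lie in $[0,1728]$.

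Your proposed route to a uniform-in-$\ell$ tail bound is where the gap lies. You assert that the recursion for $P_1$ yields $|a_n|$ growing ``only polynomially in $\ell$'' for $n\ge\ell+1$, but you do not prove this, and it is far from clear: the coefficients of $P_1$ themselves (see, e.g., the example $F_{124,1}$ in \S\ref{m=1}) grow extremely rapidly with $\ell$, so a naive recursion bound will be exponential rather than polynomial. Moreover, the argument in \S\ref{smr} does not proceed by bounding the Fourier tail directly---it uses the Duke--Jenkins contour integral representation (Lemma~\ref{lemma :dukejen lemma}) and shifts the contour---so your decomposition $F=M+E$ with $E$ a Fourier tail is already a departure whose feasibility you have not established. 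Since your fallback is precisely the paper's entire argument, the extra analytic machinery you propose is both unnecessary and, as written, incomplete.
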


\section{Background and Preliminaries on Modular Forms}\label{background}
\subsection{Definitions}
Let $k\ge 0$ be an even integer, and let $\HH=\{\tau:\Im(\tau)>0\}$ denote the upper half plane. Let $f:\HH\to\C$ be a holomorphic function; we say that $f$ is a \emph{modular form of weight $k$} if
\begin{equation}\label{def1}
    f\left(\frac{a\tau+b}{c\tau +d}\right)=\left(c\tau+d\right)^{k}f(\tau),\quad\forall \begin{psmallmatrix}
    a & b \\
    c & d
    \end{psmallmatrix}\in\psl.
\end{equation}
and $f$ is bounded as $\Im(\tau)\to\infty$. If $f$ vanishes as $\Im(\tau)\to\infty$, we say that $f$ is a cusp form.
\begin{remark}
We can replace \eqref{def1} with the following conditions:
\begin{align}
    f(\tau) & = f(\tau+1),\label{eq:1periodic}\\
    f(\tau) & = \tau^{-k}f(-1/\tau). \label{eq:inverting}
\end{align}
\end{remark}
When $k\ge 4$ and even, there exists a nonzero modular form in $M_{k}$ known as the (normalized) Eisenstein series
\begin{equation}\label{esn}
    E_{k}(\tau) = \frac{1}{2}\msum{(m,n)\in\Z^{2}}{\gcd(m,n)=1}\frac{1}{\left(m\tau+n\right)^{k}}= 1 -\gamma_{k}\sum_{n=1}^{\infty}\sigma_{k-1}(n)q^{n},
\end{equation}
where $\sigma_{k-1}(n)=\sum_{d\mid n}d^{k-1}$, $\gamma_{k}=\frac{2k}{B_k}$, and $B_k$ is the $k$-th Bernoulli number.\\
One can also define the Eisenstein series of weight $2$
\begin{equation}
    E_{2}(\tau) =\frac{1}{2\zeta\pr{2}}\sum_{n\neq 0}\frac{1}{n^{2}} + \frac{1}{2\zeta\pr{2}}\sum_{m\neq 0}\sum_{n\in \Z}\frac{1}{\pr{m\tau+n}^2}= 1 - 24\sum_{n=1}^{\infty}\sigma_{1}(n)q^{n}.
\end{equation}
While $E_{2}$ is not a modular form, it has some modular properties and transforms as
\begin{equation}\label{eq: E_2 transform}
    E_{2}\pr{-1/\tau} = \tau^{2}E_{2}\pr{\tau}+\frac{6\tau}{i\pi}.
\end{equation}
We will also define $E_{0}\pr{\tau}=1$.
The first cusp form we encounter is the Modular Discriminant,
\begin{equation}\label{moddisc}
    \Delta(\tau) = \frac{1}{1728}\left(E_{4}^{3}(\tau)-E_{6}^{2}(\tau)\right) = q\prod_{n=1}^{\infty}\left(1-q^{n}\right)^{24} = \sum_{n=1}^{\infty}\tau(n)q^{n}.
\end{equation}
The coefficients $\tau(n)$ are known as Ramanujan's tau function and are all integers.\\
Lastly, there is a meromorphic modular form of weight $0$, Klein's absolute invariant, or the $j$-function:
\begin{equation}\label{jinv}
    j(\tau) = \frac{E_{4}^{3}(\tau)}{\Delta(\tau)} = q^{-1} + 744 + \sum_{n=1}^{\infty}c(n)q^n,
\end{equation}
where the coefficients $c(n)$ are all positive integers.\\
\subsection{The Miller Basis for Modular Forms} Let $m\in\{1,\ldots,\ell\}$, and denote:
\[e_{k,m}=\Delta^{\ell}E_{k'}j^{\ell-m},\]
Notice that $e_{k,m}$ has integer coefficients and that
\[e_{k,m}= \left(q^\ell + O\left(q^{\ell+1}\right)\right)\left(q^{-\ell+m}+O\left(q^{-\ell+m+1}\right)\right)=q^{m} + O\left(q^{m+1}\right).\]
Using Gaussian elimination we obtain a basis of reduced row echelon form, $\left\{g_{k,m}\right\}_{m=1}^{\ell}$. Moreover, for any $m\in\{1,\ldots,\ell\}$ there exists a unique polynomial $F_{k,m}\in \Z[x]$ of degree $\ell-m$, so that
\begin{equation}
    g_{k,m}=\Delta^{\ell}E_{k'}F_{k,m}(j)=q^{m}+O\left(q^{\ell+1}\right).
\end{equation}
\begin{remark}
Following each step in the Gaussian elimination process, we can see that $F_{k,m}$ has integer coefficients. The polynomial $F_{k,m}$ is the associated Faber polynomial of $g_{k,m}$; Faber polynomials play a major role in the study of zeros of modular forms (see \cite{rudnick2023}), and we will discuss those in detail in \S\ref{FaberPoly}.
\end{remark}
\subsection{Modular Forms on the Arc and Under Conjugation}\label{forms on the arc}
Let $f\in M_{k}$, and consider its $q$-expansion $f(\tau) = \sum_{n=0}^{\infty}a_{n}q^{n}$. Suppose $a_{n}$ are all real, then:
\begin{equation}
    \overline{f(\tau)}=\sum_{n=0}^{\infty}a_{n}\overline{e^{2\pi in\tau}}
    =\sum_{n=0}^{\infty}a_{n}e^{2\pi in(-\overline{\tau})} = f(-\overline{\tau}).
\end{equation}
Suppose $\tau = e^{i\theta}$, with $\theta\in\left[\frac{\pi}{2},\frac{2\pi}{3}\right]$. We have $\overline{\tau} = 1/e^{i\theta}$ and from \eqref{eq:inverting} we get
\begin{equation*}
    \overline{f\left(e^{i\theta}\right)} = f\left(-1/e^{i\theta}\right) = e^{ik\theta}f\left(e^{i\theta}\right),
\end{equation*}
which yields, 
\begin{equation}
    \overline{e^{ik\theta/2}f\left(e^{i\theta}\right)}=e^{-ik\theta/2}\overline{f\left(e^{i\theta}\right)} = e^{ik\theta/2}f\left(e^{i\theta}\right).
\end{equation}
So, $g(\theta)=e^{ik\theta/2}f(e^{i\theta})$ is real valued.

\section{Proof of Theorem~\ref{mr}}\label{smr}
We begin with proving some bounds on $\Delta$.
\subsection{Bounds and Properties of $\Delta$}
The goal of this section is to prove the following proposition:
\begin{prop}\label{Prop: Bounds of delta on the arc}
    For all $\theta\in\left[\frac{\pi}{2},\frac{2\pi}{3}\right]$,
    \[\abs{\Delta\pr{e^{i\theta}}}\ge \abs{\Delta(i)} = \pr{\frac{\varpi}{\sqrt{2}\pi}}^{12} = 0.00178537\ldots\]
    with $\varpi = 2\int_{0}^{1}\frac{dx}{\sqrt{1-x^{4}}} = 2.622057\ldots $, and 
    \[\abs{\Delta\pr{e^{i\theta}}}\le \abs{\Delta(\rho)} = \frac{27}{256}\pr{\frac{\varpi'}{\pi}}^{12} =  0.00480514\ldots\]
    with $\varpi' = 2\int_{0}^{1}\frac{dx}{\sqrt{1-x^{6}}} =2.42865\ldots$.
\end{prop}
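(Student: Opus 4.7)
Since $\Delta$ is nowhere zero on $\HH$, the question reduces to understanding the extrema of the smooth function $\theta \mapsto \abs{\Delta(e^{i\theta})}$ on $\bigl[\frac{\pi}{2}, \frac{2\pi}{3}\bigr]$. The plan is to show that this function is strictly increasing on the interval, so that its minimum is attained at $\theta = \pi/2$ and its maximum at $\theta = 2\pi/3$; the explicit values on the right-hand sides will then follow from classical special-value formulas at the two elliptic points.

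For the monotonicity, combine the logarithmic derivative $\Delta'/\Delta = 2\pi i E_2$ with the chain rule on $\tau = e^{i\theta}$ to obtain
\[\frac{d}{d\theta}\log\abs{\Delta(e^{i\theta})} = -2\pi\,\Re\pr{e^{i\theta}E_2(e^{i\theta})}.\]
The quasi-modular transformation \eqref{eq: E_2 transform} applied at $\tau = e^{i\theta}$ forces $\Im\pr{e^{i\theta}E_2(e^{i\theta})} = \frac{3}{\pi}$ to be \emph{constant} on the arc, so it suffices to show that $u(\theta) := \Re\pr{e^{i\theta}E_2(e^{i\theta})}$ is $\le 0$, with equality only at $\theta = \pi/2$. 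Differentiating $\phi(\theta) := e^{i\theta}E_2(e^{i\theta})$ using Ramanujan's identity $E_2' = \frac{\pi i}{6}(E_2^2 - E_4)$ and separating real parts yields
\[u'(\theta) = \frac{\pi}{6}\pr{\psi(\theta) - u(\theta)^2} - \frac{3}{2\pi}, \qquad \psi(\theta) := e^{2i\theta}E_4(e^{i\theta}),\]
where $\psi$ is real-valued by \S\ref{forms on the arc}, since $E_4$ has real coefficients and weight $4$.

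The valence formula \eqref{valence} forces the unique zero of $E_4$ in $\cF$ to occur at $\rho$, so $\psi$ does not vanish on $\bigl[\frac{\pi}{2}, \frac{2\pi}{3}\bigr)$; combined with $\psi(\pi/2) = -E_4(i) < 0$, continuity yields $\psi \le 0$ throughout the arc. Hence $u'(\theta) \le -\frac{3}{2\pi} < 0$, and since $E_2(i) = \frac{3}{\pi}$ (another application of \eqref{eq: E_2 transform}, at $\tau = i$) gives $u(\pi/2) = \Re(iE_2(i)) = 0$, we conclude $u < 0$ on $\bigl(\frac{\pi}{2}, \frac{2\pi}{3}\bigr]$, proving the strict monotonicity. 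For the endpoint values, the vanishings $E_6(i) = 0$ and $E_4(\rho) = 0$ (both immediate from the weight-$k$ transformations at the elliptic points) reduce matters to $\Delta(i) = E_4(i)^3/1728$ and $\abs{\Delta(\rho)} = \abs{E_6(\rho)}^2/1728$; classical Chowla--Selberg evaluations at the CM points yield $E_4(i) = 3(\varpi/\pi)^4$ and $\abs{E_6(\rho)} = \frac{27}{2}(\varpi'/\pi)^6$, from which the stated formulas follow. The main obstacle is the monotonicity, with the sign of $\psi$ as the key ingredient; the endpoint values are standard.
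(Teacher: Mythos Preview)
Your proposal is correct and follows essentially the same route as the paper. Your function $u(\theta)=\Re\bigl(e^{i\theta}E_2(e^{i\theta})\bigr)$ is exactly the paper's $e_2(\theta)=e^{i\theta}E_2(e^{i\theta})+\frac{3}{i\pi}$, and your derivative formula $u'=\frac{\pi}{6}(\psi-u^2)-\frac{3}{2\pi}$ with $\psi=e^{2i\theta}E_4(e^{i\theta})$ coincides with the paper's computation $\frac{de_2}{d\theta}=-\frac{3}{2\pi}-\frac{\pi}{6}e_2^2+\frac{\pi}{6}e_4$; the only cosmetic difference is that the paper packages the monotonicity via the real-valued $\delta(\theta)=e^{6i\theta}\Delta(e^{i\theta})$ rather than $\log\abs{\Delta(e^{i\theta})}$, and cites Hurwitz and Katayama (rather than Chowla--Selberg) for the endpoint evaluations.
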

As this proposition implies, on the interval $\prb{\frac{\pi}{2},\frac{2\pi}{3}}$ the function $\theta\mapsto\abs{\Delta\pr{e^{i\theta}}}$ attains its extrema values on the boundary of the interval.
To prove this proposition, we will need the following lemmata:
\begin{lemma}\label{lemma: e_2,e_4, delta are negative}
    \begin{enumerate}[label = (\roman*)]
        \item The function $e_{4}:\prb{\frac{\pi}{2},\frac{2\pi}{3}}\to\R$ defined by $e_{4}\pr{\theta}=e^{2i\theta}E_{4}\pr{e^{i\theta}}$ for all $\theta\in\prb{\frac{\pi}{2},\frac{2\pi}{3}}$ is negative for all $\theta\in\left[\frac{\pi}{2},\frac{2\pi}{3}\right)$ and vanishes at $\frac{2\pi}{3}$. \label{e_{4} is negative}
        \item The function $\delta:\prb{\frac{\pi}{2},\frac{2\pi}{3}}\to\R$ defined by $\delta(\theta)=e^{6i\theta}\Delta\pr{e^{i\theta}}$ for all $\theta\in\prb{\frac{\pi}{2},\frac{2\pi}{3}}$ is negative for all $\theta\in\left[\frac{\pi}{2},\frac{2\pi}{3}\right]$.
        \item The function $e_{2}:\prb{\frac{\pi}{2},\frac{2\pi}{3}}\to\R$ defined by $e_{2}\pr{\theta} = e^{i\theta}E_{2}\pr{e^{i\theta}}+\frac{3}{i\pi}$  for all $\theta\in\prb{\frac{\pi}{2},\frac{2\pi}{3}}$ is real valued and is negative for all $\theta\in\left(\frac{\pi}{2},\frac{2\pi}{3}\right]$.
    \end{enumerate}
\end{lemma}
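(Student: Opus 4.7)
The plan is to prove the three parts in order, in each case exploiting the reality formalism of Section~\ref{forms on the arc}: for a modular form $f$ of weight $k$ with real $q$-expansion coefficients, the function $\theta\mapsto e^{ik\theta/2}f(e^{i\theta})$ is real-valued on $\prb{\frac{\pi}{2},\frac{2\pi}{3}}$. This reduces each sign question to non-vanishing plus a value check at $\theta=\pi/2$.

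For (i), applying the valence formula \eqref{valence} to $E_4$ forces a unique simple zero in $\cF$ at $\rho$, so $e_4$ vanishes only at $\theta=2\pi/3$ on $\prb{\frac{\pi}{2},\frac{2\pi}{3}}$. To fix the sign I would evaluate $e_4(\pi/2)=-E_4(i)$. Since $E_4$ has no zeros on the imaginary axis (its only zero in $\cF$ is $\rho$), and $E_4(iy)=1-240\sum_{n\ge 1}\sigma_3(n)e^{-2\pi ny}$ is real-valued, continuous in $y>0$, and tends to $1$ as $y\to\infty$, we conclude $E_4(i)>0$, so $e_4(\pi/2)<0$ and by continuity $e_4<0$ throughout the half-open interval. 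Part (ii) is immediate: the product $\Delta(iy)=e^{-2\pi y}\prod_{n\ge 1}(1-e^{-2\pi ny})^{24}>0$ is manifestly positive, so $\delta(\pi/2)=-\Delta(i)<0$; combined with the non-vanishing of $\Delta$ on $\HH$ and the reality of $\delta$, this gives $\delta<0$ on the whole interval.

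The subtler case is (iii). First, to see that $e_2$ is real, note that on $\abs{\tau}=1$ we have $-1/\tau=-\bar\tau$, so reality of the $q$-coefficients of $E_2$ gives $\overline{E_2(\tau)}=E_2(-\bar\tau)=E_2(-1/\tau)$; combining with the transformation \eqref{eq: E_2 transform} and multiplying by $\bar\tau=1/\tau$ yields
\begin{equation*}
    \overline{\tau E_2(\tau)} = \tau E_2(\tau) + \frac{6}{i\pi},
\end{equation*}
which forces $\Im(\tau E_2(\tau))=3/\pi$ and hence $e_2(\theta)=\Re(\tau E_2(\tau))\in\R$. Setting $\tau=i$ in \eqref{eq: E_2 transform} directly yields $E_2(i)=3/\pi$, so $e_2(\pi/2)=0$.

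The decisive step is a derivative computation. Differentiating $\tau E_2(\tau)$ in $\theta$ and applying Ramanujan's identity $q\,\frac{dE_2}{dq}=\frac{1}{12}(E_2^2-E_4)$ gives
\begin{equation*}
    \frac{d}{d\theta}[\tau E_2(\tau)] = i\tau E_2(\tau) - \frac{\pi\tau^2}{6}\left(E_2(\tau)^2 - E_4(\tau)\right).
\end{equation*}
Taking real parts and using $\Im(\tau E_2)=3/\pi$, $\tau^2 E_4=e_4(\theta)\in\R$, together with $\Re(\tau^2 E_2^2)=\Re((\tau E_2)^2)=e_2(\theta)^2-9/\pi^2$, one obtains
\begin{equation*}
    e_2'(\theta) = -\frac{3}{2\pi} - \frac{\pi}{6}e_2(\theta)^2 + \frac{\pi}{6}e_4(\theta).
\end{equation*}
By part (i) already proved, $e_4(\theta)\le 0$ on the interval, so $e_2'(\theta)\le -\frac{3}{2\pi}<0$. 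Hence $e_2$ is strictly decreasing; starting at $e_2(\pi/2)=0$, it is therefore strictly negative on $(\pi/2,2\pi/3]$. The main obstacle I expect is engineering the derivative identity so that the negativity falls out immediately from part (i); without this, a direct analysis of $\abs{\Delta(e^{i\theta})}$ via its oscillatory product expansion would be considerably more delicate.
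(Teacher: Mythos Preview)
Your proof is correct and follows essentially the same route as the paper: parts (i) and (ii) by reality, non-vanishing, and a sign check at $\theta=\pi/2$, and part (iii) via Ramanujan's identity to obtain the derivative formula $e_2'=-\tfrac{3}{2\pi}-\tfrac{\pi}{6}e_2^2+\tfrac{\pi}{6}e_4$, which is exactly the paper's computation. One cosmetic slip: the $q$-expansion of $E_4$ has a $+240$, not $-240$; your continuity argument for $E_4(i)>0$ is unaffected, but the paper simply reads off positivity from $E_4(i)=1+240\sum_{n\ge 1}\sigma_3(n)e^{-2\pi n}\ge 1$.
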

\begin{proof}
    \begin{enumerate}[label = (\roman*)]
        \item $E_{4}$ has a unique zero in the fundamental domain, at $\rho = e^{2\pi i/3}$. Therefore, $e_{4}$ is real-valued, continuous, and nonzero for all $\theta\in\left[\frac{\pi}{2},\frac{2\pi}{3}\right)$. Hence, it is enough to show that $e_{4}\pr{\frac{\pi}{2}}<0$, and indeed
        \[-e_{4}\pr{\frac{\pi}{2}}=E_{4}\pr{i}=1+240\sum_{n=1}^{\infty}\sigma_{3}\pr{n}e^{-2\pi n}\ge 1>0.\]
        \item We know that $\Delta$ never vanishes, thus $\delta$ never vanishes. Hence, it is enough to show that $\delta\pr{\frac{\pi}{2}}<0$. Indeed,
        \[\delta\pr{\frac{\pi}{2}} = -\Delta(i)=-\frac{E_{4}\pr{i}^{3}}{1728}<0,\]
        since $E_{4}(i)>0$. 
        \item First, we will show that $e_{2}$ is real valued. Since $E_{2}$ has real Fourier coefficients, we have $\overline{E_{2}(\tau)}=E_{2}\pr{-\overline{\tau}}$. Using \eqref{eq: E_2 transform} we obtain
        \begin{multline*}
            \overline{e_{2}(\theta)} = e^{-i\theta}\overline{E_{2}\pr{e^{i\theta}}} - \frac{3}{i\pi}= e^{-i\theta}E_{2}\pr{-e^{-i\theta}}- \frac{3}{i\pi} \\ = e^{-i\theta}\pr{e^{2i\theta}E_{2}\pr{e^{i\theta}}+\frac{6e^{i\theta}}{i\pi }} - \frac{3}{i\pi} = e^{i\theta}E_{2}\pr{e^{i\theta}}+ \frac{3}{i\pi} = e_{2}\pr{\theta}.
        \end{multline*}
        Using \eqref{eq: E_2 transform} again, we get $E_{2}\pr{i}=\frac{3}{\pi}$, and thus
        \[e_{2}\pr{\frac{\pi}{2}}=e^{i\pi/2}E_{2}\pr{i} + \frac{3}{i\pi}=i\frac{3}{\pi}-\frac{3}{\pi}i=0.\]
        We claim that $e_{2}$ is decreasing on $\left[\frac{\pi}{2},\frac{2\pi}{3}\right]$, which yields $e_{2}(\theta)<0$ for all $\theta\in \left(\frac{\pi}{2},\frac{2\pi}{3}\right]$. We will show that $\frac{de_{2}}{d\theta}<0$:\\
        Using an identity of Ramanujan \cite{ramid}, we know that
        \[\frac{1}{2\pi i}\frac{dE_{2}}{d\tau}=\frac{E_{2}^{2}-E_{4}}{12}.\]
        Hence,
        \begin{align*}
            \frac{de_{2}}{d\theta}(\theta) & = \frac{d}{d\theta}\pr{e^{i\theta}E_{2}\pr{e^{i\theta}}+\frac{3}{i\pi}}\\
            & = ie^{i\theta}E_{2}\pr{e^{i\theta}} + e^{i\theta} \frac{d}{d\theta}\pr{E_{2}\pr{e^{i\theta}}} \\
            & = i\pr{e_{2}\pr{\theta}-\frac{3}{i\pi}} + e^{i\theta}\cdot ie^{i\theta}\frac{dE_{2}}{d\tau}\pr{e^{i\theta}} \\
            & = ie_{2}\pr{\theta}-\frac{3}{\pi}- e^{2i\theta}\frac{\pi}{6}\pr{E_{2}\pr{e^{i\theta}}^{2} - E_{4}\pr{e^{i\theta}}}\\
            & = ie_{2}\pr{\theta}-\frac{3}{\pi}-\frac{\pi}{6}\pr{\pr{e^{i\theta}E_{2}\pr{e^{i\theta}}}^{2} - e_{4}\pr{\theta}}\\
            & = ie_{2}\pr{\theta}-\frac{3}{\pi}+ \frac{\pi}{6}e_{4}\pr{\theta}-\frac{\pi}{6}\pr{e_{2}\pr{\theta}-\frac{3}{i\pi}}^{2}\\
            & = ie_{2}\pr{\theta}-\frac{3}{\pi}+ \frac{\pi}{6}e_{4}\pr{\theta}-\frac{\pi}{6}e_{2}\pr{\theta}^{2}+\frac{e_{2}\pr{\theta}}{i}+\frac{3}{2\pi}\\
            & = -\frac{3}{2\pi}-\frac{\pi}{6}e_{2}\pr{\theta}^{2}+\frac{\pi}{6}e_{4}\pr{\theta}.
        \end{align*}
        Since $e_{4}\pr{\theta}<0$ for all $\theta\in\left[\frac{\pi}{2},\frac{2\pi}{3}\right)$ we obtain $\frac{de_{2}}{d\theta}\pr{\theta}<0$ for all $\theta \in \left(\frac{\pi}{2},\frac{2\pi}{3}\right]$.
    \end{enumerate}
\end{proof}
We now have the tools to prove the following lemma, which is key for proving Proposition \ref{Prop: Bounds of delta on the arc},
\begin{lemma}
    The function $\delta$ is decreasing on $\left[\frac{\pi}{2},\frac{2\pi}{3}\right]$.
\end{lemma}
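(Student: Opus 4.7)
The plan is to compute $\delta'(\theta)$ directly using the logarithmic derivative of $\Delta$, and to observe that the definition of $e_{2}$ in Lemma~\ref{lemma: e_2,e_4, delta are negative} is tailored so that the answer is proportional to $e_{2}(\theta)\delta(\theta)$. The sign of the derivative is then immediate from parts (ii) and (iii) of that same lemma.

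Concretely, I would start from the standard identity
\[
    \frac{1}{2\pi i}\frac{d\Delta}{d\tau} = E_{2}\Delta,
\]
which follows at once from the product expansion $\Delta(\tau)=q\prod_{n\ge 1}(1-q^{n})^{24}$ in \eqref{moddisc} and the $q$-expansion of $E_{2}$. Differentiating $\delta(\theta)=e^{6i\theta}\Delta(e^{i\theta})$ and inserting this identity gives
\[
    \delta'(\theta)=6i\,e^{6i\theta}\Delta(e^{i\theta})+e^{6i\theta}\cdot ie^{i\theta}\cdot 2\pi i\,E_{2}(e^{i\theta})\Delta(e^{i\theta})=\bigl[6i-2\pi\,e^{i\theta}E_{2}(e^{i\theta})\bigr]\delta(\theta).
\]

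The next step is to rewrite the bracket using $e_{2}$. From the definition $e_{2}(\theta)=e^{i\theta}E_{2}(e^{i\theta})+\frac{3}{i\pi}$ in Lemma~\ref{lemma: e_2,e_4, delta are negative}(iii), together with $\frac{3}{i\pi}=-\frac{3i}{\pi}$, we have $e^{i\theta}E_{2}(e^{i\theta})=e_{2}(\theta)+\frac{3i}{\pi}$. Substituting, the $6i$ term cancels exactly and we obtain the clean formula
\[
    \delta'(\theta)=-2\pi\,e_{2}(\theta)\,\delta(\theta).
\]

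To finish, I invoke Lemma~\ref{lemma: e_2,e_4, delta are negative}: by part (ii), $\delta(\theta)<0$ on the whole interval $[\frac{\pi}{2},\frac{2\pi}{3}]$, and by part (iii), $e_{2}(\theta)<0$ on $(\frac{\pi}{2},\frac{2\pi}{3}]$ with $e_{2}(\frac{\pi}{2})=0$. Hence $\delta'(\theta)\le 0$ on $[\frac{\pi}{2},\frac{2\pi}{3}]$, with strict inequality on $(\frac{\pi}{2},\frac{2\pi}{3}]$, so $\delta$ is (strictly) decreasing as claimed. The only real obstacle is the bookkeeping in the derivative computation; the essential point is that $e_{2}$ was defined with precisely the right constant shift $\frac{3}{i\pi}$ to absorb the $6i$ produced by differentiating $e^{6i\theta}$, leaving a real, signed multiple of $\delta$.
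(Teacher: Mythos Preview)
Your proof is correct and follows essentially the same route as the paper: both compute $\delta'(\theta)$ via the identity $\frac{d\Delta}{d\tau}=2\pi i E_{2}\Delta$, rewrite $e^{i\theta}E_{2}(e^{i\theta})$ in terms of $e_{2}(\theta)$ to obtain $\delta'(\theta)=-2\pi e_{2}(\theta)\delta(\theta)$, and then conclude from the sign information in Lemma~\ref{lemma: e_2,e_4, delta are negative}. Your handling of the endpoint $\theta=\pi/2$ (where $\delta'=0$) is in fact slightly more explicit than the paper's.
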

\begin{proof}
    As stated in \cite{ramid},
    \[\frac{d\Delta}{d\tau} = 2\pi i E_{2}\Delta.\]
    Hence,
    \begin{multline*}
        \frac{d\delta}{d\theta}\pr{\theta} = 6ie^{6i\theta}\Delta\pr{e^{i\theta}}+e^{6i\theta}\cdot ie^{i\theta}\frac{d\Delta}{d\tau}\pr{e^{i\theta}} = 6i\delta\pr{\theta}-2\pi e^{6i\theta}\cdot e^{i\theta}E_{2}\pr{e^{i\theta}}\Delta\pr{e^{i\theta}}\\
        = 6i\delta\pr{\theta}-2\pi \pr{e_{2}\pr{\theta}- \frac{3}{i\pi}}\delta\pr{i\theta} =-2\pi e_{2}\pr{\theta}\delta\pr{\theta}.
    \end{multline*}
    Since $e_{2}<0$ for all $\theta\in \left(\frac{\pi}{2},\frac{2\pi}{3}\right]$ and $\delta\pr{\theta}< 0$ for all $\theta\in \left[\frac{\pi}{2},\frac{2\pi}{3}\right]$ we get $\frac{d\delta}{d\theta}\pr{\theta} <0$ for all $\theta \in \left(\frac{\pi}{2},\frac{2\pi}{3}\right]$. Hence, $\delta$ is decreasing on $\left[\frac{\pi}{2},\frac{2\pi}{3}\right]$.
\end{proof}

\begin{figure}[ht]
    \begin{center}
    \includegraphics[height=65mm]{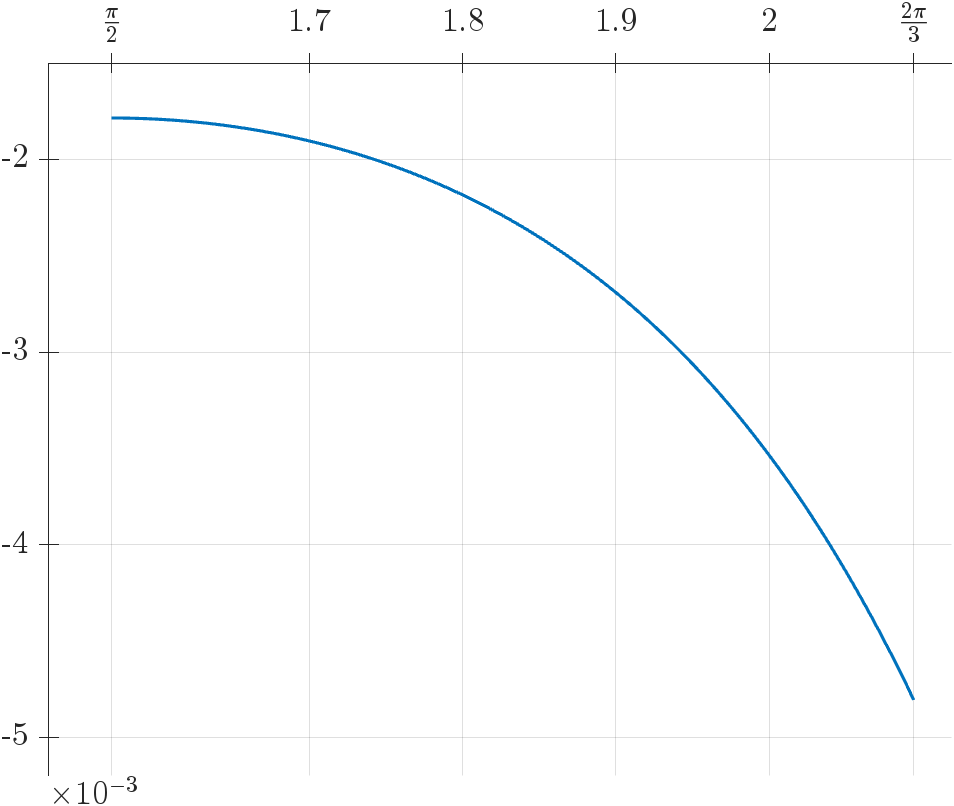}
    \caption{$\delta\pr{\theta}$ on the interval $\left[\frac{\pi}{2},\frac{2\pi}{3}\right]$.}
    \label{fig:Delta on arc}
    \end{center}
\end{figure}

Finally, we can prove Proposition \ref{Prop: Bounds of delta on the arc}:
\begin{proof}[Proof of Proposition \ref{Prop: Bounds of delta on the arc}]
    The function $\abs{\Delta\pr{e^{i\theta}}}$ is increasing on $\left[\frac{\pi}{2},\frac{2\pi}{3}\right]$, thus, for all $\theta\in \left[\frac{\pi}{2},\frac{2\pi}{3}\right]$ we have
    \[\Delta\pr{i} \le \abs{\Delta\pr{e^{i\theta}}}\le \abs{\Delta\pr{\rho}}.\]
    Since $\Delta = \frac{E_{4}^{3}-E_{6}^{2}}{1728}$ and since $E_{4}\pr{\rho}=0$, $E_{6}\pr{i} =0 $ we get
    \begin{align}
        \Delta\pr{i}  & = \frac{E_{4}\pr{i}^3}{1728},\\
        \Delta\pr{\rho}  & = -\frac{E_{6}\pr{\rho}^2}{1728}.
    \end{align}
    Hurwitz  \cite{Hurwitz} showed that 
    \[\sum_{(m,n)\in\Z^{2}\smallsetminus\{(0,0)\}}\frac{1}{\pr{mi+n}^{4}}=\frac{\varpi^{4}}{15},\]
    with $\varpi = 2\int_{0}^{1}\frac{dx}{\sqrt{1-x^{4}}} = 2.622057\ldots $, hence
    \[ E_{4}\pr{i} =\frac{1}{2\zeta(4)} \sum_{(m,n)\in\Z^{2}\smallsetminus\{(0,0)\}}\frac{1}{\pr{mi+n}^{4}}=\frac{3\varpi^{4}}{\pi^{4}}.\]
    Therefore,
    \[\Delta\pr{i}  = \frac{E_{4}\pr{i}^3}{1728}=\pr{\frac{E_{4}\pr{i}}{12}}^{3} = \pr{\frac{\varpi}{\sqrt{2}\pi}}^{12}=0.00178537\ldots.\]
    Katayama \cite{Katayama} gives a generalization of Hurwitz’s formula
    \[\sum_{(m,n)\in\Z^{2}\smallsetminus\{(0,0)\}}\frac{1}{\pr{m\rho+n}^{6}}=\frac{\varpi'^{6}}{35},\]
    with $\varpi' = 2\int_{0}^{1}\frac{dx}{\sqrt{1-x^{6}}} =2.42865\ldots$, hence
    \[E_{6}\pr{\rho} = \frac{1}{2\zeta\pr{6}}\sum_{(m,n)\in\Z^{2}\smallsetminus\{(0,0)\}}\frac{1}{\pr{m\rho+n}^{6}}=\frac{945}{2\pi^{6}}\frac{\varpi'^{6}}{35}=\frac{27\varpi'^{6}}{2\pi^{6}}.\]
    Therefore,
    \[\Delta(\rho)=-\frac{E_{6}\pr{\rho}^{2}}{1728} = -\frac{27}{256}\pr{\frac{\varpi'}{\pi}}^{12} =  -0.00480514\ldots.\qedhere\]
\end{proof}
\begin{prop} \label{tau bounds}
    \begin{enumerate}[label = (\roman*)]
        \item For all $\tau\in\mathbb{H}$ with $e^{-2\pi\Im(\tau) }\le \frac{1}{3}$,
        \[\left|\Delta\left(\tau\right)\right|\ge e^{-2\pi \Im(\tau)}\left(1-e^{-2\pi \Im(\tau)}-e^{-4\pi \Im(\tau)}-\frac{2e^{-4\pi \Im(\tau)}}{1-e^{-2\pi \Im(\tau)}}\right)^{24}.\]
        \item For all $\tau\in\mathbb{H}$
        \[\left|\Delta\left(\tau\right)\right|\le e^{-2\pi \Im(\tau)}\left(1+2e^{-2\pi\Im(\tau)}+\frac{2e^{-8\pi \Im(\tau)}}{1-e^{-4\pi \Im(\tau)}}\right)^{24}.\]
    \end{enumerate}
\end{prop}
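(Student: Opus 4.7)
The natural starting point is the Jacobi infinite product
\[\Delta(\tau) \;=\; q\prod_{n=1}^{\infty}(1-q^n)^{24},\qquad q=e^{2\pi i\tau},\]
which, writing $r=|q|=e^{-2\pi\Im(\tau)}$, gives
\[|\Delta(\tau)| \;=\; r\cdot\Bigl(\prod_{n=1}^\infty|1-q^n|\Bigr)^{24}.\]
Thus both parts of the proposition reduce to two-sided estimates on $\prod_{n}|1-q^n|$. The triangle inequality provides the pointwise bracket $1-r^n \le |1-q^n| \le 1+r^n$ for every $n$, so the problem reduces further to bounding the real products $\prod_{n\ge 1}(1-r^n)$ from below and $\prod_{n\ge 1}(1+r^n)$ from above, after which we raise to the $24$th power.

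For the lower bound (i), I would isolate the first two factors and write
\[\prod_{n=1}^\infty(1-r^n) \;=\; (1-r)(1-r^2)\prod_{n\ge 3}(1-r^n),\]
then apply the elementary Weierstrass-type inequality $\prod_n(1-a_n)\ge 1-\sum_n a_n$ (valid when $a_n\ge 0$ and $\sum a_n<1$) to the tail with $a_n=r^n$, giving $\prod_{n\ge 3}(1-r^n)\ge 1-\sum_{n\ge 3}r^n = 1-\tfrac{r^3}{1-r}$. Expanding $(1-r)(1-r^2)$ and regrouping produces a lower bound of the shape $1-r-r^2-\tfrac{c\,r^s}{1-r}$. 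The hypothesis $r\le 1/3$ is precisely what keeps this polynomial strictly positive, so that raising to the $24$th power preserves the inequality and yields the advertised bound on $|\Delta(\tau)|$.

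For the upper bound (ii), the analogous strategy applies to $\prod_n(1+r^n)$. I would extract the factor $(1+r)$ and bound $\prod_{n\ge 2}(1+r^n)$ from above; one convenient route is Euler's identity $\prod_n(1+r^n)=\prod_{k\text{ odd}}(1-r^k)^{-1}$ together with geometric-series estimates on each factor, while a more hands-on alternative uses $\log(1+r^n)\le r^n$ to control $\prod_{n\ge 2}(1+r^n)$ by an exponential of $\tfrac{r^2}{1-r}$, then first-order expansion. Collecting even and odd contributions separately naturally produces terms of denominator $1-r^2$, ultimately giving the shape $1+2r+\tfrac{c\,r^s}{1-r^2}$ stated.

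The main difficulty is not the existence of bounds (any pointwise triangle inequality produces some bound) but arriving at the precise polynomial shapes $1-r-r^2-\tfrac{2r^2}{1-r}$ and $1+2r+\tfrac{2r^4}{1-r^2}$ announced. The bookkeeping — which factors to separate first, and which Weierstrass-type estimate to invoke at which step — has to be done carefully so that after raising to the $24$th power the resulting bounds remain as clean and as tight as claimed. The threshold $r\le 1/3$ in (i) is exactly the calibration needed to ensure the base of the $24$th power is positive.
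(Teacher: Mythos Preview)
Your approach to (i) is sound and in fact simpler than the paper's: separating off $(1-r)(1-r^2)$ and applying the Weierstrass inequality to the tail gives
\[
\prod_{n\ge 1}(1-r^n)\ \ge\ (1-r)(1-r^2)\Bigl(1-\frac{r^3}{1-r}\Bigr)=1-r-r^2+r^5,
\]
which is stronger than the stated bound (so the claimed shape $1-r-r^2-\tfrac{cr^s}{1-r}$ is not what actually emerges, but what emerges is better). The paper instead uses Euler's pentagonal number theorem for both parts.

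The genuine gap is in (ii). Your first move $|1-q^n|\le 1+r^n$ discards too much: the resulting upper bound $\prod_{n\ge1}(1+r^n)$ is \emph{not} dominated by $1+2r+\tfrac{2r^4}{1-r^2}$. For example at $r=\tfrac12$ one computes $\prod_{n\ge1}(1+2^{-n})\approx 2.384$, whereas $1+2\cdot\tfrac12+\tfrac{2\cdot(1/16)}{3/4}=\tfrac{13}{6}\approx 2.167$. No downstream manipulation (the Euler identity $\prod(1+r^n)=\prod_{k\ \mathrm{odd}}(1-r^k)^{-1}$, the bound $\log(1+x)\le x$, etc.) can repair this, because the inequality you would need is simply false for $\prod(1+r^n)$. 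The paper's route is essential here: one applies the pentagonal number theorem \emph{before} taking absolute values, obtaining
\[
\Bigl|\prod_{n\ge1}(1-q^n)\Bigr|\ \le\ 1+\sum_{k\ge1}\Bigl(r^{k(3k-1)/2}+r^{k(3k+1)/2}\Bigr),
\]
which exploits the sparseness of the pentagonal exponents and the cancellation in the series. Bounding the $k=1$ contribution by $2r$ and each $k\ge2$ contribution by $2r^{2k}$ (since $k(3k-1)/2\ge 2k$ for $k\ge2$) then yields exactly $1+2r+\tfrac{2r^4}{1-r^2}$.
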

\begin{proof}
\begin{enumerate}[label = (\roman*)]
    \item Recall that by Euler's pentagonal number theorem, for any $|z|<1$ we have:
    \[\prod_{n=1}^{\infty}\left(1-z^{n}\right)=1 + \sum_{k=1}^{\infty}(-1)^{k}\left(z^{\frac{k(3k+1)}{2}}+z^{\frac{k(3k-1)}{2}}\right)\]
    and the series on the RHS converges absolutely.
    Due to the absolute convergence, we can change the order of summation and write:
    \begin{align*}
        \prod_{n=1}^{\infty}\left(1-z^{n}\right) & = 1 + \sum_{k=1}^{\infty}\left(z^{k(6k+1)}+z^{k(6k-1)}-z^{(2k-1)(3k-1)}-z^{(2k-1)(3k-2)}\right)\\
        & = 1 +z^7 +z^5 - z - z^2 + \sum_{k=2}^{\infty}z^{6k^{2}}\left(z^{k}+z^{-k}-z^{-5k+1}-z^{-7k+2}\right).
    \end{align*}
    Let $z\in (0,1)$. For all $k\ge 1$, we have $z^{-5k+1}<z^{-7k+2}$ and since $0 \le 5k^2-7k+2$ we also have $z^{6k^{2}-7k+2}\le z^{k^{2}}$.
    Therefore,
    \begin{multline*}
        z^{6k^{2}}\left(z^{k}+z^{-k}-z^{-5k+1}-z^{-7k+2}\right)\ge -z^{6k^{2}}\left(z^{-5k+1}+z^{-7k+2}\right) \\> -2z^{6k^{2}}z^{-7k+2}> -2z^{k^2} \ge -2z^{k}.
    \end{multline*}
    Hence, 
    \[\prod_{n=1}^{\infty}\left(1-z^{n}\right) \ge 1-z-z^2 -2\sum_{k=1}^{\infty}z^{k} = 1-z - z^2 - \frac{2z^2}{1-z}. \]
    Thus, for any $\tau\in\mathbb{H}$ with $e^{-2\pi\Im(\tau) }\le \frac{1}{3}$ we have:
    \begin{align*}
        \left|q\prod_{n=1}^{\infty}\left(1-q^{n}\right)^{24}\right| & \ge|q|\left(\prod_{n=1}^{\infty}\left(1-|q|^{n}\right)\right)^{24}=e^{-2\pi \Im(\tau)}\left(\prod_{n=1}^{\infty}\left(1-e^{-2\pi n\Im(\tau)}\right)\right)^{24}\\
        & \ge e^{-2\pi \Im(\tau)}\left(1-e^{-2\pi \Im(\tau)}-e^{-4\pi \Im(\tau)}-\frac{2e^{-4\pi \Im(\tau)}}{1-e^{-2\pi \Im(\tau)}}\right)^{24}.
    \end{align*}
    
    \item By the pentagonal number theorem and the triangle inequality:
    \begin{multline*}
        \left|\prod_{n=1}^{\infty}\left(1-z^{n}\right)\right|\le 1+\sum_{k=1}^{\infty}\left|(-1)^{k}\left(z^{\frac{k(3k+1)}{2}}+z^{\frac{k(3k-1)}{2}}\right)\right|\\\le 1 + \sum_{k=1}^{\infty}\left|z\right|^{\frac{k(3k+1)}{2}}+\left|z\right|^{\frac{k(3k-1)}{2}}.
    \end{multline*}
    For all $|z|<1$, we have $\left|z\right|^{\frac{k(3k+1)}{2}}<\left|z\right|^{\frac{k(3k-1)}{2}}$. In addition, since $\frac{k(3k-1)}{2}\ge 2k$ for all $k\ge2$, we obtain: $$\left|z\right|^{\frac{k(3k+1)}{2}}+\left|z\right|^{\frac{k(3k-1)}{2}}\le 2\left|z\right|^{\frac{k(3k-1)}{2}}\le 2\left|z\right|^{2k}.$$
    Thus, 
    \[ \left|\prod_{n=1}^{\infty}\left(1-z^{n}\right)\right| \le 1+2\left|z\right|+\sum_{k=2}^{\infty}2\left|z\right|^{2k} = 1 +\left|z\right|+\frac{2\left|z\right|^4}{1-\left|z\right|^2},\]
    which shows \[\left|\Delta\left(\tau\right)\right|\le e^{-2\pi \Im(\tau)}\left(1+2e^{-2\pi\Im(\tau)}+\frac{2e^{-8\pi \Im(\tau)}}{1-e^{-4\pi \Im(\tau)}}\right)^{24},\]
    for all $\tau\in\mathbb{H}$, and concludes our proof.\qedhere
\end{enumerate}
\end{proof}

From Proposition \ref{Prop: Bounds of delta on the arc} and Proposition \ref{tau bounds}, we get the following corollary:
\begin{corr}
For all $\theta \in [\pi/2,2\pi/3]$, $x\in[-1/2,1/2]$:
\begin{equation}\label{deltabounds65}
\left|\frac{\Delta(e^{i\theta})}{\Delta(x+0.65i)}\right|< \frac{1}{2},
\end{equation}
and
\begin{equation}\label{deltabounds75}
\left|\frac{\Delta(e^{i\theta})}{\Delta(x+0.75i)}\right|< \frac{7}{10}.
\end{equation}
\end{corr}
\begin{proof}
Substitute $\tau = x+0.65i$,
\[|\Delta (x+0.65i)|\ge e^{-\frac{13\pi}{10}}\left(1-e^{-\frac{13\pi}{10}}-e^{-\frac{13\pi}{5}}-\frac{2e^{-\frac{26\pi}{5}}}{1-e^{-\frac{13\pi}{10}}}\right)^{24}>0.01\]
Now, using Proposition \ref{Prop: Bounds of delta on the arc}, we know $\left|\Delta(e^{i\theta})\right|<0.005$ for all $\theta\in\left[\frac{\pi}{2},\frac{2\pi}{3}\right]$.
For all  $\theta \in [\pi/2,2\pi/3]$, $x\in[-1/2,1/2]$:
\[\left|\frac{\Delta(e^{i\theta})}{\Delta(x+0.65i)}\right|< \frac{0.005}{0.01}= \frac{1}{2}.\]
Similarly, we can get:
\begin{equation*}
\left|\frac{\Delta(e^{i\theta})}{\Delta(x+0.75i)}\right|< \frac{7}{10}.\qedhere
\end{equation*}
\end{proof}

\subsection{A Key Proposition}
For the proof of Theorem \ref{mr} we will also need the following proposition:

\begin{prop}\label{mrl}
Fix $m\ge 1$. There exists $c_{1},c_{2}>0$ so that if $\ell > c_{1}m+c_{2}$, then we have
\[\left|e^{ik\theta/2}e^{2\pi m \sin\theta}g_{k,m}(e^{i\theta})-2\cos\left(k\theta/2+2\pi m \cos\theta\right)\right|<2\]
for all $\theta\in\prb{\frac{\pi}{2},\frac{2\pi}{3}}$. Furthermore, $c_{1} = \frac{\pi}{2\log\pr{10/7}}$.
\end{prop}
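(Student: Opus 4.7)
The plan is to exploit that $e^{ik\theta/2}g_{k,m}(e^{i\theta})$ is real on the arc, split the $q$-expansion $g_{k,m}(\tau) = q^m + h(\tau)$ with $h(\tau) = \sum_{n\ge\ell+1}a_nq^n$, and show that the tail is strictly smaller than the main term once $\ell$ is large. First, by \S\ref{forms on the arc} (using $\overline{g_{k,m}(\tau)} = g_{k,m}(-\overline{\tau}) = g_{k,m}(-1/\tau) = \tau^kg_{k,m}(\tau)$ at $\tau = e^{i\theta}$), the quantity $e^{ik\theta/2}g_{k,m}(e^{i\theta})$ is real-valued, and multiplying by the positive real $e^{2\pi m\sin\theta}$ preserves reality; call this real number $A$. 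Evaluating the main term at $\tau = e^{i\theta}$ gives $e^{ik\theta/2}e^{2\pi m\sin\theta}q^m = e^{i\phi}$ with $\phi = k\theta/2 + 2\pi m\cos\theta$, so $A = e^{i\phi} + R$ where $R := e^{ik\theta/2}e^{2\pi m\sin\theta}h(e^{i\theta})$. Since $A$ is real, $A = \cos\phi + \mathrm{Re}(R)$, and therefore $|A - 2\cos\phi|\le 1 + |R|$. The proposition thus reduces to proving $|R|<1$.

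To bound $|R| = e^{2\pi m\sin\theta}|h(e^{i\theta})|$, I would use the Cauchy integral formula for Fourier coefficients at height $y_1 = 3/4$: for $n\ge\ell+1$,
\[|a_n|\le e^{2\pi n\cdot 3/4}\max_{x\in[-1/2,1/2]}|g_{k,m}(x+3i/4)|.\]
Summing against $|q|^n = e^{-2\pi n\sin\theta}$ (a convergent geometric series since $\sin\theta\ge\sqrt{3}/2 > 3/4$ on the arc) yields
\[|h(e^{i\theta})|\le \frac{s^{\ell+1}}{1-s}\max_x|g_{k,m}(x+3i/4)|,\qquad s = e^{-2\pi(\sin\theta-3/4)}<1.\]
For the maximum of $|g_{k,m}|$ at $y_1 = 3/4$ I would use the factorization $g_{k,m} = \Delta^\ell E_{k'}F_{k,m}(j)$ together with the corollary's $\Delta$-ratio bound $|\Delta(e^{i\theta})/\Delta(x+3i/4)|<7/10$: raised to the $\ell$-th power this trades the dominant $\Delta$-factor for $(7/10)^\ell$, while the $E_{k'}$ and Faber-polynomial factors contribute only bounded multiplicative constants on the horizontal line $y = 3/4$. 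Collecting everything, one obtains an estimate of the shape
\[|R|\le C\cdot\pr{\tfrac{7}{10}}^\ell\cdot e^{2\pi m(\sin\theta - 3/4)}\le C\cdot\pr{\tfrac{7}{10}}^\ell\cdot e^{\pi m/2},\]
since $\sin\theta\le 1$ gives $2\pi(\sin\theta - 3/4)\le\pi/2$. Requiring the right-hand side to be $<1$ yields $\ell\log(10/7) > \pi m/2 + O(1)$, i.e.\ $\ell > \pi m/(2\log(10/7)) + c_2$, which is precisely the announced $c_1 = \pi/(2\log(10/7))$.

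The main obstacle is making the estimate on $\max_x|g_{k,m}(x+3i/4)|$ sharp enough that the exponent of $m$ in the final bound does not exceed $\pi/2$. Concretely, one needs to show that at height $y=3/4$ the form $g_{k,m}$ is essentially no larger than its leading monomial $q^m$ up to the $\Delta$-ratio $(7/10)^\ell$, which in turn requires uniform control of the Faber polynomial $F_{k,m}(j(x+3i/4))$ as $x$ varies in $[-1/2,1/2]$. This matching step — ensuring that the $e^{2\pi m\sin\theta}$ amplification is almost exactly compensated by the $q^m$-like decay of $g_{k,m}$ at $y = 3/4$, leaving only the $e^{\pi m/2}$ factor and the purely $\ell$-dependent $(7/10)^\ell$ — is the technical heart of the argument.
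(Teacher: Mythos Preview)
Your opening reduction is correct and rather elegant: writing $A=e^{i\phi}+R$ with $A$ real and deducing $|A-2\cos\phi|\le 1+|R|$ recovers the $2\cos$ main term from a \emph{single} Fourier monomial, whereas the paper obtains it from \emph{two} residues. The gap is in your proposed control of $|R|$.

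Concretely, you plan to bound $M=\max_{|x|\le 1/2}|g_{k,m}(x+3i/4)|$ via the factorization $g_{k,m}=\Delta^{\ell}E_{k'}F_{k,m}(j)$ together with the $\Delta$-ratio $|\Delta(e^{i\theta})/\Delta(x+3i/4)|<7/10$. This does not work, for two reasons. First, the ratio bound gives a \emph{lower} bound on $|\Delta(x+3i/4)|$, but in the factorization that quantity sits in the numerator of $M$, so the inequality points the wrong way; moreover there is no $\Delta(e^{i\theta})$ in sight to pair it with. Second, the Faber polynomial $F_{k,m}$ has degree $\ell-m$, so $|F_{k,m}(j(x+3i/4))|$ is not a ``bounded multiplicative constant'' uniform in $\ell$; indeed $|\Delta|^{\ell}|F_{k,m}(j)|=|g_{k,m}/E_{k'}|$ is just $M$ again up to a harmless factor, so the factorization is circular here. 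Consequently the $(7/10)^{\ell}$ you announce has no source in your argument: the only exponential-in-$\ell$ decay your Cauchy-plus-geometric-series step produces is $s^{\ell}=e^{-2\pi\ell(\sin\theta-3/4)}$, and you have no independent bound on $M$ to compensate.

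The paper sidesteps this entirely by using the Duke--Jenkins contour-integral representation (Lemma~\ref{lemma :dukejen lemma}), whose integrand is
\[
\frac{\Delta^{\ell}(z)E_{k'}(z)E_{14-k'}(\tau)}{\Delta^{\ell+1}(\tau)\bigl(j(\tau)-j(z)\bigr)}\,e^{2\pi i m\tau},
\]
with $z=e^{i\theta}$. Here the ratio $\bigl(\Delta(z)/\Delta(\tau)\bigr)^{\ell}$ is \emph{built into} the integrand and no Faber polynomial appears; shifting the contour down picks up the residues at $\tau=z$ and $\tau=-1/z$ (giving the $2\cos$ term), and the remaining integral at height $0.75$ (or $0.65$, with one extra residue, when $\theta$ is near $2\pi/3$) is bounded directly by $(7/10)^{\ell}$ times a quantity that is genuinely $O(1)$ in $\ell$ and $m$. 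That integral representation is precisely the missing ingredient that makes the $\Delta$-ratio usable and eliminates the need to bound $F_{k,m}$ separately.
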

For the proof of Proposition \ref{mrl}, we use the method introduced in the work of W. Duke and P. Jenkins in \cite{dukejen}.

\subsection{Proof of Proposition \ref{mrl}}
We begin with the statement and proof of the following version of Lemma $2$ in \cite{dukejen}:
\begin{lemma}\label{lemma :dukejen lemma}
    Let $R>0$. There exists $A>1$ so that for all $z\in\cF$ with $\abs{j(z)}<R$ we have
    \[g_{k,m}(z)=\int_{-1/2+iA}^{1/2+iA}\frac{\Delta^{\ell}(z)E_{k'}(z)E_{14-k'}(\tau)}{\Delta^{\ell+1}(\tau)\left(j(\tau)-j(z)\right)}e^{2\pi im\tau}d\tau.\]
    where we integrate over the interval $\prb{-\frac{1}{2}+iA,\frac{1}{2}+iA}=\prs{t+iA:t\in\prb{-\frac{1}{2},\frac{1}{2}}}$.
\end{lemma}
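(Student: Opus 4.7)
The plan is to apply the residue theorem to a carefully chosen weight-$2$ meromorphic modular form in $\tau$, and to convert its residue at $\tau = z$ into $g_{k,m}(z)$ via the identity $E_{k'}(\tau)E_{14-k'}(\tau)=E_{14}(\tau)$ (immediate since $\dim M_{14}=1$ and both sides have constant term $1$) together with Ramanujan's formula $j'(\tau)=-2\pi i E_{14}(\tau)/\Delta(\tau)$.

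First I would introduce the auxiliary function
\[ \phi(\tau) := \frac{E_{14-k'}(\tau)\, g_{k,m}(\tau)}{\Delta^{\ell+1}(\tau)\pr{j(\tau)-j(z)}}. \]
A weight count (numerator $(14-k')+k$, denominator $12(\ell+1)$, difference $2$) shows $\phi$ is a meromorphic modular form of weight $2$, so $\phi(\tau)\,d\tau$ is $\psl$-invariant and in particular $1$-periodic. Since $\Delta$ has no zeros on $\HH$, the only pole of $\phi$ in $\cF$ is located at $\tau=z$, and is simple for generic $z$.

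Next I would choose $A>0$ large enough that $\Im(z)<A$ whenever $\abs{j(z)}<R$, and that $\abs{j(\tau)}>R$ on the line $\Im(\tau)=A$ (both hold for $A$ large, since $\abs{j(\tau)}\sim e^{2\pi\Im(\tau)}$). Applying the residue theorem to the boundary $\gamma$ of $\cF$ truncated at height $A$, oriented counterclockwise, the two vertical sides cancel by $1$-periodicity of $\phi$, and the circular arc cancels via the substitution $\tau\mapsto -1/\tau$ combined with the weight-$2$ invariance of $\phi(\tau)\,d\tau$. Hence
\[ -\int_{-1/2+iA}^{1/2+iA}\phi(\tau)\,d\tau=2\pi i\,\operatorname{Res}_{\tau=z}\phi(\tau). \]
Computing the residue using $j'(z)=-2\pi i E_{k'}(z)E_{14-k'}(z)/\Delta(z)$ yields $\operatorname{Res}_{\tau=z}\phi(\tau)=g_{k,m}(z)/\pr{-2\pi i\,\Delta^{\ell}(z)E_{k'}(z)}$, so that
\[ g_{k,m}(z)=\int_{-1/2+iA}^{1/2+iA}\frac{\Delta^{\ell}(z)E_{k'}(z)E_{14-k'}(\tau)\, g_{k,m}(\tau)}{\Delta^{\ell+1}(\tau)\pr{j(\tau)-j(z)}}\,d\tau. \]

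Finally, I would replace $g_{k,m}(\tau)$ in the numerator by $e^{2\pi i m\tau}$. On the line $\Im(\tau)=A$ the horizontal integral of a $1$-periodic function $\sum_n c_n e^{2\pi i n\tau}$ equals its constant Fourier coefficient $c_0$ (this is the identity $\int_{-1/2+iA}^{1/2+iA}e^{2\pi in\tau}\,d\tau=\delta_{n,0}$). Writing $g_{k,m}(\tau)=e^{2\pi im\tau}+\sum_{n\ge\ell+1}a_n e^{2\pi in\tau}$, for $A$ large the factor $E_{14-k'}(\tau)/\pr{\Delta^{\ell+1}(\tau)(j(\tau)-j(z))}$ admits a convergent $q$-expansion starting at $q^{-\ell}$, so multiplying by $e^{2\pi in\tau}$ with $n\ge\ell+1$ shifts it to strictly positive powers of $q$ and contributes nothing to the constant coefficient. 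Hence the tail integrates to zero and the stated formula follows. The one delicate point, which should be the main obstacle to write out cleanly, is the treatment of the elliptic points $z\in\{i,\rho\}$: there $j'(z)=0$ forces $\phi$ to have a higher-order pole while $\gamma$ also has a corner at $z$, and both issues are resolved either by a standard indentation of $\gamma$ or by continuity from generic values of $z$.
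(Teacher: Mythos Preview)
Your argument is correct and yields the stated formula, but the derivation differs from the paper's. The paper never integrates over $\partial\cF$; instead it applies Cauchy's integral formula to the polynomial $F_{k,m}$ on the closed curve $\cC=j\bigl([-\tfrac12+iA,\tfrac12+iA]\bigr)$ in the $j$-plane (closed because $j$ is $1$-periodic), and then changes variables back to $\tau$ via $\frac{dj}{d\tau}=-2\pi i\,E_{14}/\Delta$. This directly produces
\[
F_{k,m}(\zeta)=\int_{-1/2+iA}^{1/2+iA}\frac{F_{k,m}(j(\tau))\,E_{14}(\tau)}{\Delta(\tau)\bigl(j(\tau)-\zeta\bigr)}\,d\tau,
\]
after which $F_{k,m}(j(\tau))=g_{k,m}(\tau)/(\Delta^{\ell}E_{k'})$ is split as $q^{m}/(\Delta^{\ell}E_{k'})$ plus a tail that integrates to zero (exactly your final step, phrased as Cauchy's theorem in the $q$-disc). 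You instead work in the $\tau$-plane from the start, using the residue theorem on the truncated fundamental domain together with the weight-$2$ invariance of $\phi(\tau)\,d\tau$ to cancel the arc. Your route makes the modular structure explicit but costs an extra boundary-cancellation argument; the paper's $j$-plane route sidesteps that entirely, since the single horizontal segment already images to a closed contour.

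One remark on your closing caveat: the ``delicate point'' is broader than $z\in\{i,\rho\}$. Whenever $z$ lies anywhere on $\partial\cF$ --- on the arc or on the line $\Re z=-\tfrac12$ --- the pole of $\phi$ sits on your contour $\gamma$, and the arc-cancellation step presupposes $\phi$ is holomorphic there. Your proposed fix (analytic continuation from interior $z$) handles all of these cases uniformly, so there is no gap, but the issue is not specific to the elliptic points.
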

\begin{proof}
We have $j(\tau)=q^{-1}+744+P(q)$ where $P(q)=O(\abs{q})$ as $\Im(\tau)\to\infty$. Thus, there exists $N>0$ so that for any $\tau\in\HH$ with $\Im(\tau)>N$, we have $\abs{P(q)}<744$. Choose $A>\max(N,1)$ such that $e^{2\pi A}>R+1488$. For any $\Im(\tau)\ge A$, 
\begin{equation}
    \abs{j(\tau)} \ge \frac{1}{\abs{q}}-744-\abs{P(q)} > e^{2\pi\Im(\tau)}-1488 \ge e^{2\pi A}-1488>R.
\end{equation}
Thus, there exists $A>1$ so that $j\pr{\left[-\frac{1}{2}+iA,\frac{1}{2}+iA\right]} \subset \C \setminus \left\{\abs{z}<R\right\}$. 
Denote $\cC = j\pr{\left[-\frac{1}{2}+iA,\frac{1}{2}+iA\right]}$, and $\gamma:\left[-\frac{1}{2}+iA,\frac{1}{2}+iA\right]\to \C$ as the curve $\gamma(\tau)=j(\tau)$. Then $\gamma$ is a closed, simple, and smooth curve onto $\cC$, oriented clockwise.
Let $U \subset \C$ be the bounded connected component of $\C\setminus\cC$, and let $\abs{\zeta}<R$. So $\zeta\in U$, and the Faber polynomial $F_{k,m}$ is holomorphic on $\overline{U}$. Thus, using Cauchy's formula,
\[F_{k,m}(\zeta)=\frac{1}{2\pi i}\int_{\cC}\frac{F_{k,m}(\xi)}{\xi-\zeta}d\xi = \frac{-1}{2\pi i}\int_{-1/2+iA}^{1/2+iA} \frac{F_{k,m}(j(\tau))}{j(\tau)-\zeta}\frac{dj}{d\tau}(\tau)d\tau.\]
To calculate $\frac{dj}{d\tau}$, we use the following identity, which is due to Ramanujan \cite{ramid}:  \[q\frac{dE_{4}}{dq} = \frac{E_{2}E_{4}-E_{6}}{3}.\]
Hence, 
\[q\frac{dj}{dq} = \frac{3q\frac{dE_{4}}{dq}E_{4}^{2}\Delta -q\frac{d\Delta}{dq} E_{4}^{3}}{\Delta^{2}} = \frac{\left(E_{2}E_{4}-E_{6}\right)E_{4}^{2}\Delta-E_{2}\Delta E_{4}^{3}}{\Delta^2} = -\frac{E_{6}E_{4}^{2}}{\Delta},\]
which, using the chain rule, yields:
\begin{equation}\label{difeq}
    \frac{dj}{d\tau} = \frac{dj}{dq}\frac{dq}{d\tau}=2\pi iq\frac{dj}{dq}=-2\pi i\frac{E_{14}}{\Delta}.
\end{equation}
Therefore, 
\begin{equation}\label{I1}
    F_{k,m}(\zeta) =\int_{-1/2+iA}^{1/2+iA} \frac{F_{k,m}(j(\tau))}{j(\tau)-\zeta}\frac{E_{14}(\tau)}{\Delta(\tau)}d\tau.
\end{equation}
We have $g_{k,m}(\tau) = q^{m}+q^{\ell+1}G(\tau)$, where $G$ is a holomorphic function and $\ord_{\infty}G\ge 0$.
Therefore, we have:
\begin{equation}\label{DicardCoeff}
    F_{k,m}\pr{j(\tau)} =\frac{q^{m}}{\Delta^{\ell}(\tau)E_{k'}(\tau)}+\frac{q^{\ell+1}G(\tau)}{\Delta^{\ell}(\tau)E_{k'}(\tau)}=\frac{q^{m}}{\Delta^{\ell}(\tau)E_{k'}(\tau)}+q\frac{G(\tau)}{P^{\ell}(\tau)E_{k'}(\tau)},
\end{equation}
where $P(\tau)=\prod_{n\ge 1}\pr{1-q^{n}}^{24}$.
Plugging \eqref{DicardCoeff} in \eqref{I1}, we get:
\begin{multline*}
    F_{k,m}\pr{\zeta} = \int_{-1/2+iA}^{1/2+iA}\frac{e^{2\pi im\tau}}{\Delta^{\ell}(\tau)E_{k'}(\tau)}\frac{E_{14}(\tau)}{\Delta(\tau)\pr{j(\tau)-\zeta}}d\tau\\ +\int_{-1/2+iA}^{1/2+iA}q\frac{G(\tau)}{P^{\ell}(\tau)E_{k'}(\tau)}\frac{E_{14}(\tau)}{\Delta(\tau)\pr{j(\tau)-\zeta}}d\tau.
\end{multline*}
Under the change of variables $\tau\to q$, the contour of integration deforms into a circle of radius $e^{-2\pi A}$ around the origin (oriented counter-clockwise).
Doing so with the second integral gives us
\begin{multline*}
    F_{k,m}\pr{\zeta} = \int_{-1/2+iA}^{1/2+iA}\frac{e^{2\pi im\tau}}{\Delta^{\ell}(\tau)E_{k'}(\tau)}\frac{E_{14}(\tau)}{\Delta(\tau)\pr{j(\tau)-\zeta}}d\tau\\ +\frac{1}{2\pi i}\int_{\{\abs{q}=e^{-2\pi A}\}}\frac{G(\tau)}{P^{\ell}(\tau)E_{k'}(\tau)}\frac{E_{14}(\tau)}{\Delta(\tau)\pr{j(\tau)-\zeta}}dq.
\end{multline*}
The function $q\mapsto\frac{G(\tau)}{P^{\ell}(\tau)E_{k'}(\tau)}\frac{F_{k,m}(j(\tau))}{j(\tau)-\zeta}\frac{E_{14}(\tau)}{\Delta(\tau)}$ is holomorphic on the disk $\left\{\abs{q}<e^{-2\pi A}\right\}$, so by Cauchy's Theorem:
\begin{equation}\label{intfola}
    F_{k,m}\pr{\zeta} = \int_{-1/2+iA}^{1/2+iA}\frac{e^{2\pi im\tau}}{\Delta^{\ell+1}(\tau)E_{k'}(\tau)}\frac{E_{14}(\tau)}{j(\tau)-\zeta}d\tau.
\end{equation}
Now, let $z\in\cF$ and suppose that $\abs{j(z)}<R$. Substituting $\zeta=j(z)$ into \eqref{intfola}, using the identity $E_{14-k'}E_{k'}=E_{14}$, and multiplying by $\Delta^\ell(z)E_{k'}(z)$, we get:
\begin{multline}\label{fkmintegral}
    g_{k,m}(z) =\Delta^{\ell}(z)E_{k'}(z)F_{k,m}\pr{j(z)}\\=\int_{-1/2+iA}^{1/2+iA}\frac{\Delta^{\ell}(z)E_{k'}(z)E_{14-k'}(\tau)}{\Delta^{\ell+1}(\tau)\pr{j(\tau)-j(z)}}e^{2\pi im\tau} d\tau. \qedhere
\end{multline}
\end{proof}

Equipped with Lemma \ref{lemma :dukejen lemma}, we will lower the contour of integration, collecting poles as we decrease it from its initial height.

Fix $m\ge1$. For briefness, we denote
\begin{equation}\label{Gdef}
    G(\tau,z) =\frac{\Delta^{\ell}(z)E_{k'}(z)E_{14-k'}(\tau)}{\Delta^{\ell+1}(\tau)\left(j(\tau)-j(z)\right)}e^{2\pi im\tau}.
\end{equation}
The function $\tau\mapsto G(\tau,z)$ has a simple pole whenever $\tau \sim z$ (i.e. there exists $\gamma\in \psl$ such that $z=\gamma.\tau$).
Using \eqref{difeq}, and the identity $E_{14-k'}E_{k'}=E_{14}$ we can write 
\begin{equation}\label{Gres}
    G(\tau,z) = \frac{e^{2\pi im\tau}}{-2\pi i}\frac{\Delta^{\ell}(z)E_{k'}(z)}{\Delta^{\ell}(\tau)E_{k'}(\tau)}\frac{\frac{dj}{d\tau}\pr{\tau}}{j(\tau)-j(z)},
\end{equation}
which will be useful for calculating residues.
\begin{remark}
    The factor $E_{k'}\pr{\tau}$ in the denominator of \eqref{Gres} does not contribute any additional poles, as they are absorbed in the factor of $\frac{dj}{d\tau}\pr{\tau}$ in the numerator, as seen in \eqref{Gdef}.
\end{remark}

\begin{lemma}\label{lemma: possible poles}
    Let $\theta\in\prb{\frac{\pi}{2},\frac{2\pi}{3}}$ and denote $z=e^{i\theta}$. The only possible poles of $\tau\mapsto G\pr{\tau,z}$ in \[D=\left\{\tau\in \HH:\abs{\Re(\tau)}\le\frac{1}{2}, \Im(\tau) > \frac{2}{5}\right\}\] other than $z,-1/z$ are $\frac{-1}{z+1},\frac{z}{z+1}$ and $\frac{-1}{z-1},\frac{-z}{z-1}$.
\end{lemma}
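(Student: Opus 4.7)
The plan is to read off the poles of $\tau\mapsto G(\tau,z)$ directly from its representation \eqref{Gres}. Because $\Delta$ has no zeros in $\HH$, and by the preceding remark the factor $E_{k'}(\tau)$ in the denominator is absorbed by the zeros of $\frac{dj}{d\tau}$, the only poles in $\HH$ come from zeros of $j(\tau)-j(z)$, i.e.\ from the $\psl$-translates $\tau=\gamma z$ with $\gamma\in\psl$.

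Writing $\gamma=\begin{psmallmatrix}a&b\\c&d\end{psmallmatrix}$, the identity $|z|=1$ gives
\[
|cz+d|^{2}=c^{2}+d^{2}+2cd\cos\theta=c^{2}\sin^{2}\theta+(c\cos\theta+d)^{2}.
\]
Since $\Im(\gamma z)=\sin\theta/|cz+d|^{2}$, the condition $\gamma z\in D$ imposes $|cz+d|^{2}<5\sin\theta/2\le 5/2$. Using $\sin^{2}\theta\ge 3/4$ on $[\pi/2,2\pi/3]$ this forces $c^{2}<10/3$, hence $|c|\le 1$; and for each admissible $c$ the inequality $(c\cos\theta+d)^{2}<5/2-c^{2}\sin^{2}\theta$ restricts $d$ to a short list of integers.

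The proof is then completed by a finite case analysis on $(c,d)$, taken modulo $\gamma\sim-\gamma$ in $\psl$. If $c=0$ then $\gamma z=z+b$, and $|\cos\theta+b|\le 1/2$ forces $b=0$, yielding $\tau=z$. If $c=1$ then $d\in\{-1,0,1\}$, and I would rewrite $\gamma z=a-1/(cz+d)$ and invoke the clean identities $\Re(1/(z\pm 1))=\pm 1/2$ (immediate from $|z\pm 1|^{2}=2\pm 2\cos\theta$) together with $\Re(-1/z)=-\cos\theta$ to pin down the admissible $a\in\Z$:
\begin{align*}
d=0:&\quad a=0,\ \tau=-1/z;\\
d=1:&\quad a\in\{0,1\},\ \tau\in\{-1/(z+1),\,z/(z+1)\};\\
d=-1:&\quad a\in\{-1,0\},\ \tau\in\{-z/(z-1),\,-1/(z-1)\}.
\end{align*}
The union of these is exactly the six points in the statement.

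The main obstacle is purely bookkeeping: handling the identification $\gamma\sim-\gamma$ consistently, and checking that the boundary coincidences at $\theta=\pi/2$ (where $z=-1/z=i$) and $\theta=2\pi/3$ (where, for instance, the would-be extra candidate $-1-1/z$ collapses to $z=\rho$, and $z+1$ coincides with $z/(z+1)=\rho+1$) do not produce points outside the listed six. Beyond this, every step is a short inequality.
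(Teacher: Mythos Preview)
Your proposal is correct and follows essentially the same route as the paper: identify the poles as the $\psl$-translates of $z$, use the imaginary-part constraint $\Im(\gamma z)>2/5$ to reduce $(c,d)$ to a finite list, and then use the real-part constraint $|\Re(\gamma z)|\le 1/2$ to pin down the admissible integer shifts. The only cosmetic difference is that the paper organizes the case split by the value of $|cd|$ (using $c^{2}+d^{2}+2cd\cos\theta\ge\frac{1}{2}(c^{2}+d^{2})$ when $|cd|\ge 2$), whereas you first isolate $c^{2}\sin^{2}\theta$ to get $|c|\le 1$ and then bound $d$; both lead to the same six points and the same boundary checks at $\theta=\pi/2,\,2\pi/3$.
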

 In other words, for any $\tau\sim z$ such that $\tau\notin\prs{ z,-1/z, \frac{-1}{z+1},\frac{z}{z+1},\frac{-1}{z-1},\frac{-z}{z-1}}$, we have $\tau\notin D$.
 
\begin{proof}
Let $\tau\in \HH$ such that $\tau\sim z$ and $\tau\notin\prs{ z,-1/z, \frac{-1}{z+1},\frac{z}{z+1},\frac{-1}{z-1},\frac{-z}{z-1}}$. Hence, there exists $\gamma=\begin{psmallmatrix}
    a & b\\
    c & d
\end{psmallmatrix} \in \psl$ such that $\tau = \gamma.z$ and  \[\gamma\notin\prs{\text{id},\begin{psmallmatrix}
    0 & -1\\
    1 & 0
\end{psmallmatrix},\begin{psmallmatrix}
    0 & -1\\
    1 & 1
\end{psmallmatrix},\begin{psmallmatrix}
    1 & 0\\
    1 & 1
\end{psmallmatrix},\begin{psmallmatrix}
    0 & -1\\
    1 & -1
\end{psmallmatrix},\begin{psmallmatrix}
    -1 & 0\\
    1 & -1
\end{psmallmatrix}}.\]
We divide into cases:
\begin{enumerate}[label = Case \Roman*.]
    \item Suppose $\abs{cd}=0$:\\
    If $c=0$, then $\gamma=\begin{psmallmatrix}
    1 & b\\
    0 & 1
    \end{psmallmatrix}$. Now, if $z=\rho$ then since $\gamma\neq\text{id}$ and $\tau\neq-1/z=\rho+1$ we must have $b\neq 0,1$ and therefore $\abs{\Re(\gamma.z)}>\frac{1}{2}$. If $z\neq \rho$, since $\gamma\neq\text{id}$ we must have $\abs{\Re(\gamma.z)}>\frac{1}{2}$.\\
    Otherwise, $d=0$ and therefore $-bc=1$. WLOG assume that $c=1$ so $\gamma=\begin{psmallmatrix}
    a & -1\\
    1 & 0
    \end{psmallmatrix}$. We have $\gamma.z = a-1/z$, and thus $\Re\pr{\gamma.z} = a-\cos\theta$. If $z=\rho$, since $\tau\neq\rho=-1-\frac{1}{\rho}$ and $\gamma\neq \begin{psmallmatrix}
        0 & -1\\
        1 & 0
    \end{psmallmatrix}$, we must have $a\neq -1,0$ and therefore $\abs{\Re\pr{\gamma.z}}>\frac{1}{2}$.  If $z\neq \rho$ since $\gamma\neq \begin{psmallmatrix}
        0 & -1\\
        1 & 0
    \end{psmallmatrix}$ we have $a\neq 0$ and thus $\abs{\Re\pr{\gamma.z}}>\frac{1}{2}$. In any case, $\tau=\gamma.z\notin D$.
    \item Suppose $\abs{cd}=1$:\\
    Since $\gamma\in \psl$, we can assume WLOG that $c=1$.\\
    If $d=1$, then $a-b=1$, and thus $\gamma.z = a+\frac{-1}{z+1}$. Therefore $\Re(\gamma.z) = a -\frac{1}{2}$ but since $\gamma \neq \begin{psmallmatrix}
    0 & -1\\
    1 & 1
\end{psmallmatrix},\begin{psmallmatrix}
    1 & 0\\
    1 & 1
\end{psmallmatrix}$, we have $a\neq 0,1$ and thus $\abs{\Re(\gamma.z)}>\frac{1}{2}$.\\
    If $d=-1$, then $a+b=-1$, and thus $\gamma.z = a+\frac{-1}{z-1}$. Therefore $\Re(\gamma.z) = a +\frac{1}{2}$ but since $\gamma \neq \begin{psmallmatrix}
    0 & -1\\
    1 & -1
\end{psmallmatrix},\begin{psmallmatrix}
    -1 & 0\\
    1 & -1
\end{psmallmatrix}$, we have $a\neq 0,-1$ and thus $\abs{\Re(\gamma.z)}>\frac{1}{2}$.
    Hence, $\tau=\gamma.z\notin D$.
    \item Suppose $\abs{cd}\ge 2$:\\
    In this case, $c^{2}+d^{2}\ge 5$, and \[\abs{ce^{i\theta}+d}^{2}=c^{2}+d^{2}+2cd\cos\theta\ge c^{2}+d^{2} -\abs{cd}\ge \frac{1}{2}\pr{c^{2}+d^{2}} \ge \frac{5}{2}.  \]
    Hence, \[\Im(\gamma.e^{i\theta})=\frac{\Im(e^{i\theta})}{\abs{ce^{i\theta}+d}^{2}} \le  \frac{2}{5}\] and therefore $\tau=\gamma.z\notin D$.\qedhere
\end{enumerate}
\end{proof}

Assume $z=e^{i\theta}$, where $\theta\in\pr{\frac{\pi}{2},\frac{2\pi}{3}}$. Since $j(e^{i\theta})\in \prb{0,1728}$, we can choose $R > 1728$ and by Lemma \ref{lemma :dukejen lemma} get an $A>1$ such that: 
\[g_{k,m}(z) = \int_{-1/2+iA}^{1/2+iA}G(\tau,z)d\tau.\]
Using Cauchy's residue theorem, we begin to lower the contour of integration from its initial height $A$, collecting poles along the way. Fix some $\frac{\sqrt{3}}{2}<A'<\sin\theta$, integrating along a rectangular contour as demonstrated in Figure \ref{fig:First decrease}, we decrease our contour from height $A$ to height $A'$.
\begin{figure}[ht]
    \begin{center}
    \begin{tikzpicture}[scale = 2] 
    \filldraw (0,0) circle (0.5pt) node[below left] {$0$};
    \filldraw (1,0) circle (0.5pt) node[below right] {$1$};
    \filldraw (-1,0) circle (0.5pt) node[below left] {$-1$};
    \filldraw (-0.227,0.974) circle (0.5pt) node[above left] {$z$};
    \filldraw (0.227,0.974) circle (0.5pt) node[above] {$\nicefrac{-1}{z}$};
    \draw[thick,->] (-1.25,0) -- (1.25,0) node[right] {};
    \draw[line width=0.15mm,dashed] (1,0) arc (0:60:1);
    \draw[line width=0.2mm] (0.5,0.866) arc (60:120:1);
    \draw[line width=0.15mm,dashed](-0.5,0.866) arc (120:180:1);
    \draw[line width=0.2mm] (0.5,0.866) -- (0.5,2.6);
    \draw[line width=0.2mm] (-0.5,0.866) -- (-0.5,2.6);
    \draw[dashed] (0.5,0) -- (0.5,.866);
    \draw[dashed] (-0.5,0) -- (-0.5,.866);
    \draw[thick] (-0.5,2.3) to node[currarrow,rotate = 90] {} (-0.5,.92) to node[currarrow,rotate = 180]{}(0.5,.92) to node[currarrow,rotate=-90]{} (0.5,2.3)to node[currarrow]{}(-0.5,2.3);
    \filldraw (0.5,2.3) circle (0pt) node[right] {$A$};
    \filldraw (0.5,0.92) circle (0pt) node[right] {$A'$};
    \end{tikzpicture}
    \caption{The contour of integration}
        \label{fig:First decrease}
    \end{center}
\end{figure}

We have two poles inside the region of integration at $\tau=z$  and $\tau=-1/z$. Calculating the residues using \eqref{Gres}, we obtain:
\begin{equation*}
    \res_{\tau = z}G(\tau,z) =\frac{e^{2\pi imz}}{-2\pi i},\quad\res_{\tau = -1/z}G(\tau,z) = \frac{z^{-k}e^{2\pi im(-1/z)}}{-2\pi i}.
\end{equation*}
Since $\tau\mapsto G\pr{\tau,z}$ is $1$-periodic and the two vertical parts of the integral are of opposite orientation, their contribution cancels pairwise. Therefore, we get:
\[\int_{-1/2+iA'}^{1/2+iA'}G(\tau,e^{i\theta})d\tau =g_{k,m}(e^{i\theta})-e^{2\pi ime^{i\theta}}-e^{-ik\theta}e^{-2\pi ime^{-i\theta}}.\]
Multiplying both sides by $e^{ik\theta/2}e^{2\pi m\sin\theta}$ gives us:
\begin{multline}\label{int-differ relation}
    e^{ik\theta/2}e^{2\pi m\sin\theta}\int_{-1/2+iA'}^{1/2+iA'}G(\tau,e^{i\theta})d\tau\\=e^{ik\theta/2}e^{2\pi m \sin\theta}g_{k,m}(e^{i\theta})-2\cos\left(k\theta/2+2\pi m \cos\theta\right).
\end{multline}
For convenience, we denote:
\begin{equation}
    I(a) = e^{2\pi m\sin\theta}\int_{-1/2+ia}^{1/2+ia}\left|G(\tau,e^{i\theta})\right|d\tau.
\end{equation}

We see that
 \begin{equation}\label{heightsofpoles}
     \Im\left(\frac{-1}{e^{i\theta}+1}\right)=\Im\left(\frac{e^{i\theta}}{e^{i\theta}+1}\right) = \frac{\sin\theta}{2+2\cos\theta}\ge \frac{\sin\theta}{2-2\cos\theta}=\Im\left(\frac{-e^{i\theta}}{e^{i\theta}-1}\right)
 \end{equation}
and $\Re\left(\frac{-1}{e^{i\theta}+1}\right)=\Re\left(\frac{-e^{i\theta}}{e^{i\theta}-1}\right) = -\frac{1}{2}$.  The function $\theta\mapsto \frac{\sin\theta}{2+2\cos\theta}$ is increasing on the interval $\pr{\frac{\pi}{2},\frac{2\pi}{3}}$ from $\frac{1}{2}$ to $\frac{\sqrt{3}}{2}$, and the function $\theta\mapsto \frac{\sin\theta}{2-2\cos\theta}$ is decreasing on the interval $\pr{\frac{\pi}{2},\frac{2\pi}{3}}$ from $\frac{1}{2}$ to $\frac{\sqrt{3}}{6}$. Our goal now is to decrease the path of integration from its current height $A'$ to some $\frac{1}{2}<A''<\frac{\sqrt{3}}{2}$ to avoid the contribution of the poles $\frac{-1}{z-1},\frac{-z}{z-1}$. However, for values of $\theta$ sufficiently close to $\frac{2\pi}{3}$, we cannot decrease our path from its current height without accounting for the contribution of the poles $\frac{-1}{z+1},\frac{z}{z+1}$. Hence, we divide into cases:
\subsubsection{Suppose $\frac{\pi}{2}<\theta<1.9$}
We have $\Im\left(\frac{-1}{1+e^{i\theta}}\right)<0.75<\frac{\sqrt{3}}{2}$, so we set $A''=0.75$ and decrease the contour of integration from height $A'$ to $A''$ as before, integrating along a rectangular contour, as demonstrated in Figure \ref{fig: contour pi/2}.
\begin{figure}[ht]
    \begin{center}
    \begin{tikzpicture}[scale = 3]
    \filldraw (0,0) circle (0.4pt) node[below left] {$0$};
    \filldraw (1,0) circle (0.4pt) node[below right] {$1$};
    \filldraw (-1,0) circle (0.4pt) node[below left] {$-1$};
    \filldraw (-0.227,0.974) circle (0.4pt) node[above left] {$z$};
    \filldraw (0.227,0.974) circle (0.4pt) node[above] {$\nicefrac{-1}{z}$};
    \draw[thick,->] (-1.25,0) -- (1.25,0) node[right] {};
    \draw[line width=0.15mm, loosely dashed] (1,0) arc (0:60:1);
    \draw[line width=0.2mm] (0.5,0.866) arc (60:120:1);
    \draw[line width=0.15mm, loosely dashed](-0.5,0.866) arc (120:180:1);
    \draw[line width=0.2mm] (0.5,0.866) -- (0.5,1.21);
    \draw[line width=0.2mm] (-0.5,0.866) -- (-0.5,1.21);
    \draw[loosely dashed] (0.5,0) -- (0.5,.866);
    \draw[loosely dashed] (-0.5,0) -- (-0.5,.866);
    \draw[thick] (-0.5,0.92) to node[currarrow,rotate = 90] {}  (-0.5, 0.75) to node[currarrow,rotate = 180]{}(0.5,.75) to node[currarrow,rotate=-90]{} (0.5,0.92)to node[currarrow]{}(-0.5,0.92);
    \filldraw[fill = white, draw = white] (0.525, 0.69) rectangle (0.7,0.81);
    \filldraw (0.5,0.75) circle (0pt) node[right] {$A''$};
    \filldraw (0.5,0.92) circle (0pt) node[right] {$A'$};
    
    \end{tikzpicture}
    \caption{The contour of integration when $\theta\in\left(\frac{\pi}{2},1.9\right)$.}
    \label{fig: contour pi/2}
    \end{center}
\end{figure}
As before, the contribution of the two vertical parts of the integral cancels pairwise.
Hence,
\[\int_{-1/2+iA'}^{1/2+iA'}G(\tau,e^{i\theta})d\tau = \int_{-1/2+0.75i}^{1/2+0.75i}G(\tau,e^{i\theta})d\tau.\]
Substituting in \eqref{int-differ relation}, we obtain
\begin{multline}\label{eq: int-differ relation 075}
    e^{ik\theta/2}e^{2\pi m\sin\theta}\int_{-1/2+0.75i}^{1/2+0.75i}G(\tau,e^{i\theta})d\tau\\=e^{ik\theta/2}e^{2\pi m \sin\theta}g_{m}(e^{i\theta})-2\cos\left(k\theta/2+2\pi m \cos\theta\right).
\end{multline}
Thus, the quantity we need to bound is:
\begin{equation}\label{int75}
I(0.75) \le \max_{|x|\le1/2}e^{2\pi m(\sin\theta-0.75)}\left|\frac{\Delta(e^{i\theta})}{\Delta(x+0.75i)}\right|^{\ell}\left|\frac{E_{k'}(e^{i\theta})E_{14-k'}(x+0.75i)}{\Delta(x+0.75i)\left(j(x+0.75i)-j(e^{i\theta})\right)}\right|.
\end{equation}
Lemma \ref{lemma: possible poles} and \eqref{heightsofpoles} show that for any $x\in\prb{-\frac{1}{2},\frac{1}{2}}$ and any $\theta\in \prb{\frac{\pi}{2},1.9}$ we have $x+0.75i\not\sim e^{i\theta}$ and thus $j(x+0.75i)\neq j(e^{i\theta})$. Since $j$ is holomorphic, the function $\pr{x,\theta}\mapsto j(x+0.75i)-j(e^{i\theta})$ is continuous and non-zero on $\prb{-\frac{1}{2},\frac{1}{2}}\times\prb{\frac{\pi}{2},1.9}$. In addition, $\Delta(z)\neq0$ for all $z\in\HH$ and $E_{k'},\,E_{14-k'},\,\Delta$, are all holomorphic in $\HH$. Therefore, the function \[\pr{x,\theta}\mapsto \left|\frac{E_{k'}(e^{i\theta})E_{14-k'}(x+0.75i)}{\Delta(x+0.75i)\left(j(x+0.75i)-j(e^{i\theta})\right)}\right|,\] is well defined and continuous on $\prb{-\frac{1}{2},\frac{1}{2}}\times\prb{\frac{\pi}{2},1.9}$. Since $\prb{-\frac{1}{2},\frac{1}{2}}\times\prb{\frac{\pi}{2},1.9}$ is compact, there exists $B_{1}\ge 0$ such that for all $\theta\in\prb{\frac{\pi}{2},1.9}$
 \begin{equation}\label{B75}
     \max_{\abs{x}\le\frac{1}{2}}\left|\frac{E_{k'}(e^{i\theta})E_{14-k'}(x+0.75i)}{\Delta(x+0.75i)\left(j(x+0.75i)-j(e^{i\theta})\right)}\right|<e^{B_{1}}.
 \end{equation}
Additionally, by $\eqref{deltabounds75}$, for all  $x\in \left[-\frac{1}{2},\frac{1}{2}\right]$ and all $\theta\in\left(\frac{\pi}{2},\frac{2\pi}{3}\right)$,
\begin{equation}\label{C75}
    \abs{\frac{\Delta(e^{i\theta})}{\Delta(x+0.75i)}} < e^{-\log\pr{10/7}}
.\end{equation}
Plugging \eqref{B75} and \eqref{C75} into \eqref{int75}, we obtain:
\begin{equation}\label{exp bnd 75}
    I(0.75) \le e^{2\pi m(\sin\theta-0.75)-\log\pr{10/7}\ell+B_{1}}.
\end{equation}
\subsubsection{Suppose $1.9 \le \theta < \frac{2\pi}{3}$}
In this case, the pole at $\tau = \frac{-1}{e^{i\theta}-1}$ is of height at most $0.4$ and $\tau = \frac{-1}{e^{i\theta}+1}$ is of height at least $0.69$. Hence, we will decrease $A'$ to $A''=0.65$. In this case, we slightly change our rectangular contour, adding a little semicircle to our vertical parts, so that one of the poles $\frac{-1}{z+1}$, $\frac{z}{z+1}$ is inside the region of integration, as demonstrated in Figure \ref{fig: contour 2pi/3}.

\begin{figure}[ht]
    \begin{center}
    \begin{tikzpicture}[scale = 3]
    \filldraw (0,0) circle (0.4pt) node[below left] {$0$};
    \filldraw (1,0) circle (0.4pt) node[below right] {$1$};
    \filldraw (-1,0) circle (0.4pt) node[below left] {$-1$};
    \filldraw (-0.3327,0.943) circle (0.4pt) node[above] {$z$};
    \filldraw (0.3327,0.943) circle (0.4pt) node[above] {$\nicefrac{-1}{z}\,\,\,$};
    \filldraw (-0.5,0.737) circle (0.4pt) node[right] {\footnotesize $\nicefrac{-1}{(1+z)}$};
    \draw[thick,->] (-1.25,0) -- (1.25,0) node[right] {};
    \draw[line width=0.15mm,loosely dashed] (1,0) arc (0:60:1);
    \draw[line width=0.2mm] (0.5,0.866) arc (60:120:1);
    \draw[line width=0.15mm,loosely dashed](-0.5,0.866) arc (120:180:1);
    \draw[line width=0.2mm] (0.5,0.866) -- (0.5,1.21);
    \draw[line width=0.2mm] (-0.5,0.866) -- (-0.5,1.21);
    \draw[loosely dashed] (0.5,0) -- (0.5,.866);
    \draw[loosely dashed] (-0.5,0) -- (-0.5,.866);
    \draw[thick] (-0.5,0.9) to node[currarrow,rotate = 90] {} (-0.5,.787) arc (90:270:0.05) to (-0.5, 0.65) to node[currarrow,rotate = 180]{}(0.5,.65) to (0.5,.687) arc (270:90:.05) to node[currarrow,rotate=-90]{} (0.5,0.9)to node[currarrow]{}(-0.5,0.9);
    \filldraw[fill = white, draw = white] (0.525, 0.67) rectangle (0.85,0.82);
    \filldraw (0.5,0.737) circle (0.4pt) node[right] {\footnotesize $\nicefrac{z}{(1+z)}$};
    \filldraw (0.5,0.61) circle (0pt) node[right] {$A''$};
    \filldraw (0.5,0.91) circle (0pt) node[right] {$A'$};
    \end{tikzpicture}
    \caption{The contour of integration when $\theta\in\left[1.9,\frac{2\pi}{3}\right)$.}
    \label{fig: contour 2pi/3}
    \end{center}
\end{figure}

As before, the contribution of the two vertical parts of the integral cancels pairwise.
Hence,
\begin{multline}\label{withpole}
    \int_{-1/2+iA'}^{1/2+iA'}G(\tau,e^{i\theta})d\tau \\
    = e^{ik\theta/2}e^{2\pi m\sin\theta}\left(-2\pi i\Res_{\tau = \frac{-1}{1+z}}G(\tau,z) + \int_{-1/2+0.65i}^{1/2+0.65i}G(\tau,e^{i\theta})d\tau\right).
\end{multline}
Calculating the residue, we get:
\begin{align*}
    \res_{\tau = \frac{-1}{1+z}}G(\tau,z) = \frac{(z+1)^{-k}e^{2\pi im\left(\frac{-1}{1+z}\right)}}{-2\pi i}.
\end{align*}
Therefore,
\begin{equation*}
    -2\pi ie^{ik\theta/2}e^{2\pi m\sin\theta}\Res_{\tau = \frac{-1}{1+z}}G(\tau,z) = \frac{e^{\pi m\left(2\sin\theta-\tan(\theta/2)\right)}e^{-\pi  im}}{\left(2\cos(\theta/2)\right)^{k}}
.\end{equation*}
Together with \eqref{int-differ relation} and \eqref{withpole}, we have:
\begin{equation}\label{Bound of 065}
    \left|e^{ik\theta/2}e^{2\pi m \sin\theta}g_{m}(e^{i\theta})-2\cos\left(k\theta/2+2\pi m \cos\theta\right)\right|\le \frac{e^{\pi m\left(2\sin\theta-\tan(\theta/2)\right)}}{\left(2\cos(\theta/2)\right)^{k}} +I(0.65).
\end{equation}

Consider the derivative (with respect to $\theta$) of the first term on the right-hand side of \eqref{Bound of 065}:
\begin{equation*}
    \left(\frac{e^{\pi m\left(2\sin\theta-\tan(\theta/2)\right)}}{\left(2\cos(\theta/2)\right)^{k}} \right)'\\ = \frac{e^{\pi m\left(2\sin\theta-\tan(\theta/2)\right)}\left((k\sin\theta)/2+\pi m\left(4\cos^{2}(\theta/2)\cos\theta - 1\right)\right)}{2^{k+1}\cos^{k+2}(\theta/2)}.
\end{equation*}
Therefore, if $k\ge \frac{8\pi}{\sqrt{3}}m$, then for all $\theta \in\left[\frac{\pi}{2},\frac{2\pi}{3}\right]$, we get:
\begin{equation*}
    \left(\frac{e^{\pi m\left(2\sin\theta-\tan(\theta/2)\right)}}{\left(2\cos(\theta/2)\right)^{k}} \right)' \ge \frac{e^{\pi m\left(2\sin\theta-\tan(\theta/2)\right)}\left(\frac{\sqrt{3}k}{4}+\pi m\left(-1 - 1\right)\right)}{2^{k+1}\cos^{k+2}(\theta/2)}\ge 0
.\end{equation*}
This shows that the first term on the right-hand side of \eqref{Bound of 065} is increasing, so it is bounded by its value at $\frac{2\pi}{3}$, which means: 
\begin{equation}\label{eq: Pole is bound 065}
    \frac{e^{\pi m\left(2\sin\theta-\tan(\theta/2)\right)}}{\left(2\cos(\theta/2)\right)^{k}}\le 1
\end{equation}
for all $\theta\in\left[\frac{\pi}{2},\frac{2\pi}{3}\right]$. Thus, we are left to deal with 
\begin{equation}\label{int65}
    I(0.65) \le \max_{|x|\le1/2}e^{2\pi m(\sin\theta-0.65)}\left|\frac{\Delta(e^{i\theta})}{\Delta(x+0.65i)}\right|^{\ell}\left|\frac{E_{k'}(e^{i\theta})E_{14-k'}(x+0.65i)}{\Delta(x+0.65i)\left(j(x+0.65i)-j(e^{i\theta})\right)}\right|.
\end{equation} 
Lemma \ref{lemma: possible poles} and \eqref{heightsofpoles} show that $x+0.65i\not\sim e^{i\theta}$ for all $x\in\prb{-\frac{1}{2},\frac{1}{2}}$ and any $\theta\in\prb{1.9,\frac{2\pi}{3}}$. As in the previous case, this is sufficient to imply that there exists $B_{2}\ge 0$ such that for all $\theta\in\prb{1.9,\frac{2\pi}{3}}$
 \begin{equation}\label{B65}
     \max_{\abs{x}\le\frac{1}{2}}\left|\frac{E_{k'}(e^{i\theta})E_{14-k'}(x+0.65i)}{\Delta(x+0.65i)\left(j(x+0.65i)-j(e^{i\theta})\right)}\right|<e^{B_{2}}.
 \end{equation}
Additionally, by $\eqref{deltabounds65}$, for all $x\in\left[-\frac{1}{2},\frac{1}{2}\right]$ and all $\theta\in\left(\frac{\pi}{2},\frac{2\pi}{3}\right)$,
\begin{equation}\label{C65}
    \abs{\frac{\Delta(e^{i\theta})}{\Delta(x+0.65i)}} < e^{-\log\pr{2}}
.\end{equation}
Plugging \eqref{B65} and \eqref{C65} into \eqref{int65}, we obtain:
\begin{equation}\label{exp bnd 65}
    I(0.65) \le e^{2\pi m(\sin\theta-0.65)-\log\pr{2}\ell+B_{2}}.
\end{equation}
\subsubsection{Back to $\theta\in\pr{\frac{\pi}{2},\frac{2\pi}{3}}$}
Denote 
\begin{equation}\label{c_{1}}
    c_{1} =\max\pr{\frac{\pi}{2\log\pr{10/7}},\frac{7\pi}{10\log\pr{2}}}=\frac{\pi}{2\log\pr{10/7}},
\end{equation} and 
\begin{equation}\label{c_{2}}
    c_{2}=\max\left(\frac{B_{1}-\log\pr{1.995}}{\log\pr{10/7}},\frac{B_{2}-\log\pr{0.995}}{\log\pr{2}}\right).
\end{equation}
Suppose $\ell> c_{1} m+c_{2}$, so by \eqref{exp bnd 75} 
\begin{equation*}
    I(0.75)\le e^{\frac{\pi m}{2} -\log\pr{10/7}\ell +B_{1}} < e^{\log\pr{1.995}}=1.995.
\end{equation*}
By \eqref{exp bnd 65}, 
\begin{equation*}
    1+I(0.65) \le 1+e^{\frac{7\pi m}{10} -\log\pr{2}\ell +B_{2}} < 1+e^{\log\pr{0.995}}=1.995.
\end{equation*}
Therefore,
\begin{equation*}
    \max\left(I(0.65)+1,I(0.75)\right)< 1.995.
\end{equation*}
Finally, using \eqref{eq: int-differ relation 075}, \eqref{Bound of 065} and \eqref{eq: Pole is bound 065}, for all $\theta\in\pr{\frac{\pi}{2},\frac{2\pi}{3}}$ we get:
\begin{equation*}
    \abs{e^{ik\theta/2}e^{2\pi m \sin\theta}g_{m}(e^{i\theta})-2\cos\left(k\theta/2+2\pi m \cos\theta\right)}\le \max\left(I(0.65)+1,I(0.75)\right)
,\end{equation*}
which implies:
\[\abs{e^{ik\theta/2}e^{2\pi m \sin\theta}g_{k,m}(e^{i\theta})-2\cos\left(k\theta/2+2\pi m \cos\theta\right)}<1.995.\]
Using the continuity of left-hand side of the expression above, for any $\theta\in\prb{\frac{\pi}{2},\frac{2\pi}{3}}$ we have
\begin{equation}\label{MR Eq}
    \abs{e^{ik\theta/2}e^{2\pi m \sin\theta}g_{k,m}(e^{i\theta})-2\cos\left(k\theta/2+2\pi m \cos\theta\right)}\le 1.995<2,
\end{equation}
which completes our proof of Proposition \ref{mrl}. 
\subsection{Proof of Theorem \ref{mr}}
Choose $\alpha = 4.5 \ge c_{1}$ and $\beta = c_{2}$. Let $k\ge0$ be an even integer with $\ell>\alpha m +\beta$. Denote $D=\ell-m$ and $h(\theta)= \frac{k\theta}{2}+2\pi m \cos\theta$. Consider the derivative of $h$: \[h'(\theta) = k/2-2\pi m \sin\theta= \left(k-4\pi m\sin\theta\right)/2\ge\left(k-4\pi m\right)/2. \]
Since $k\ge 12\ell > 12\alpha m \ge 4\pi m$, we have $h'(\theta)\ge 0$. Therefore, $h$ increases from $\frac{\pi k}{4}=3\pi\ell+\frac{\pi k'}{4}$ to $\frac{\pi k}{3}-\pi m = 3\pi\ell+\frac{\pi k'}{3}+\pi D$, passing through $D+1$ consecutive integer multiples of $\pi$. Let $n_0$ be the least integer such that $\pi n_{0}\in\left[\frac{\pi k}{4},\frac{\pi k}{3}-\pi m\right]$. Thus, there exists $\frac{\pi}{2}\le \theta_{0}<\theta_{1}<\ldots<\theta_{D}\le \frac{2\pi}{3}$ such that $h\pr{\theta_{j}}=\pi n_{0} +\pi j$ for all $0\le j\le D$. We have $\cos\left(h\pr{\theta_{j}}\right)=(-1)^{j+n_{0}}$. So, by Proposition \ref{mrl} we get:
\[2+2\cdot(-1)^{j+n_{0}}>e^{ik\theta_{j}/2}e^{2\pi m \sin\theta_{j}}g_{k,m}(e^{i\theta_{j}})>-2+2\cdot(-1)^{j+n_{0}}\]
Hence, the function $\theta\mapsto e^{ik\theta/2}e^{2\pi m \sin\theta}g_{k,m}(e^{i\theta})$ is continuous and changes its sign in the interval $\left(\theta_{j-1},\theta_{j}\right)$, showing that it attains the value zero at least once in each of those intervals. Since there are $D$ intervals, we deduce that $g_{k,m}(e^{i\theta})$ has $D$ zeros. Hence, the form $g_{k,m}$ has $D=\ell -m$ zeros on the arc $\cA$, in addition to the $m$ zeros of $g_{k,m}$ at infinity. In other words, all of the finite zeros of $g_{k,m}$ in the fundamental domain are on the arc $\cA$. We are left to show that the zeros of $g_{k,m}$ are uniformly distributed on the arc:\\ Let $z_{1},\ldots,z_{D}\in\prb{\pi/2,2\pi/3}$ be the $D$ zeros of the form $g_{k,m}$ under the parametrization $\theta\mapsto e^{i\theta}$ and let $\prb{a,b}\sub \prb{\pi/2,2\pi/3}$. Since $h$ is increasing, we have $z_{j}\in\prb{a,b}$ if and only if $h\pr{z_j}\in \prb{h\pr{a},h\pr{b}}$. By definition $h\pr{z_j}$ is between two consecutive integer multiples of $\pi$, therefore the number of zeros in the interval $\prb{a,b}$ is roughly the number of integer multiples in the interval $\prb{h\pr{a},h\pr{b}}$, i.e.\
\begin{multline*}
    \#\prs{1\le j\le D:z_{j}\in\prb{a,b}}\\ = \#\prs{n\in\Z:\pi n\in \prb{h\pr{a},h\pr{b}}}+O\pr{1}
    =\floor{\frac{h\pr{b}-h\pr{a}}{\pi}}+O\pr{1}.    
\end{multline*}

Hence,
\begin{multline*}
    \frac{\#\prs{1\le j\le D:z_{j}\in\prb{a,b}}}{D} = \frac{h\pr{b}-h\pr{a}}{\pi D}+O\pr{\frac{1}{D}} \\
    = \frac{k\pr{b-a}}{2\pi D}+2\pi m\frac{\cos\pr{b}-\cos\pr{a}}{D} + O\pr{\frac{1}{D}} \\=\frac{\pr{12\ell+k'}\pr{b-a}}{2\pi \pr{\ell-m}}+O\pr{\frac{1}{D}} \xrightarrow[]{\ell\to\infty}\frac{6\pr{b-a}}{\pi}=\frac{b-a}{\frac{2\pi}{3}-\frac{\pi}{2}}.
\end{multline*}
Thus, the zeros become uniformly distributed which implies Theorem \ref{mr}.

\section{Quantifying the Bounds}\label{What are the bounds}
In this section, we will quantify the constants $c_{1}$ and $c_{2}$ in Proposition \ref{mrl}, which will determine $\alpha$ and $\beta$ in Theorem \ref{mr}. Specifically, we will prove:
\begin{thm}\label{QuanMR}
    Fix $m\ge 1$. For all $k=12\ell+k'$, if $\ell > 4.5m+9.5$ then all the zeros of $g_{k,m}$ in the fundamental domain lie on the arc $\left\{e^{i\theta}:\frac{\pi}{2}\le \theta\le \frac{2\pi}{3}\right\}$.
\end{thm}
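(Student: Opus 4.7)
The statement is exactly the conclusion of Theorem \ref{mr} with the explicit choices $\alpha = 4.5$ and $\beta = 9.5$. Since the proof of Theorem \ref{mr} only uses $\alpha \ge c_1$ and $\beta \ge c_2$ from \eqref{c_{1}}--\eqref{c_{2}}, my plan reduces to verifying the two numerical inequalities $c_1 \le 4.5$ and $c_2 \le 9.5$.

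The first is immediate: from \eqref{c_{1}}, $c_1 = \pi/(2\log(10/7)) = 4.403\ldots < 4.5$.

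The heart of the argument is $c_2 \le 9.5$. From \eqref{c_{2}}, I would produce explicit admissible values of the constants $B_1, B_2$ of the uniform estimates \eqref{B75} and \eqref{B65}, meeting the two numerical targets $B_1 \le 9.5\log(10/7)+\log(1.995) \approx 4.08$ and $B_2 \le 9.5\log 2 + \log(0.995) \approx 6.58$. For each target, I would bound the integrand in the definition of $B_i$ factor by factor on its compact parameter domain. The values $|E_{k'}(e^{i\theta})|$ for $k' \in \{0,4,6,8,10,14\}$ and $\theta \in [\pi/2, 2\pi/3]$ can be controlled via the $q$-expansion $E_{k'} = 1 - \gamma_{k'}\sum_{n\ge 1}\sigma_{k'-1}(n)q^n$ with $|q| \le e^{-\pi}$; the values $|E_{14-k'}(x+iy)|$ for $y \in \{0.65, 0.75\}$ by the same method with $|q| = e^{-2\pi y}$; the lower bounds on $|\Delta(x+iy)|$ follow from Proposition \ref{tau bounds}(i), which already gives $|\Delta(x+0.65i)| > 0.01$ and yields an analogous lower bound for $y = 0.75$. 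The only remaining factor is a lower bound on $|j(x+iy) - j(e^{i\theta})|$.

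The hard part will be that last lower bound. By Lemma \ref{lemma: possible poles} and \eqref{heightsofpoles}, the function $(x,\theta) \mapsto |j(x+iy) - j(e^{i\theta})|$ is continuous and strictly positive on the relevant compact product of $[-1/2, 1/2]$ with $[\pi/2, 1.9]$ (respectively $[1.9, 2\pi/3]$ for $y = 0.65$), so its minimum is a positive constant; but making it quantitatively good enough requires care, since at these two heights $|q^{-1}| = e^{2\pi y}$ is merely of comparable size to the values of $j$ on the arc. I would write $j(\tau) = q^{-1} + 744 + P(q)$ with an explicit bound $|P(q)| \le C|q|$ obtained from the first $j$-coefficients $c(n)$, and combine with $j(e^{i\theta}) \in [0, 1728]$. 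The delicate subregion is where $|q^{-1}| = e^{2\pi y}$ is comparable to $j(e^{i\theta}) - 744$; there one exploits that $q^{-1} = e^{2\pi y}e^{-2\pi i x}$ traces a circle of radius $e^{2\pi y}$ about the origin, whose distance to the real interval of possible values $j(e^{i\theta})-744$ stays bounded below except when $x$ is very small, and that final sliver is handled by a direct computation using that the arc $\theta \mapsto j(e^{i\theta})$ and the segment $x \mapsto j(x+iy)$ meet transversally only at the admissible boundary angles. Feeding the resulting explicit $B_1, B_2$ into \eqref{c_{2}} then confirms $c_2 \le 9.5$ and completes the proof.
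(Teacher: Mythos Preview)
Your overall architecture is exactly that of the paper: reduce to $c_1\le 4.5$ (immediate) and $c_2\le 9.5$, then make $B_1$ and $B_2$ explicit by bounding each factor in \eqref{B75}, \eqref{B65}. Your treatment of $|E_{k'}|$, $|E_{14-k'}|$, and $|\Delta|$ is essentially the paper's (the paper also proves monotonicity of $|E_4(e^{i\theta})|$, $|E_6(e^{i\theta})|$ on the arc via Ramanujan's identities, but crude $q$-expansion bounds as you propose would also suffice for the targets $B_1\le 4.08$, $B_2\le 6.58$).

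The gap is in your plan for the lower bound on $|j(x+iy)-j(e^{i\theta})|$. Writing $j=q^{-1}+744+P(q)$ and reasoning about ``the circle of radius $e^{2\pi y}$ traced by $q^{-1}$'' presupposes that $q^{-1}$ dominates. At the heights in question it does not: for $y=0.75$ one has $|q^{-1}|=e^{3\pi/2}\approx 111$, while $c(1)|q|\approx 1768$ and $c(2)|q|^2\approx 1734$; at $y=0.65$ the situation is worse. So the geometric picture of a circle near a real segment is simply wrong here, and your ``final sliver'' transversality argument is moot since the two curves $x\mapsto j(x+iy)$ and $\theta\mapsto j(e^{i\theta})$ never meet on the relevant parameter rectangles. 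The paper handles this by truncating $j$ to $5$--$7$ Fourier terms with an explicit tail bound (Lemma~\ref{lemma: approximate j}, using $c(n)\le e^{4\pi\sqrt n}/(\sqrt 2\,n^{3/4})$), rewriting the resulting trigonometric polynomial as a polynomial in $z=\cos(2\pi x)$ via Chebyshev polynomials, and applying Goursat's Lemma to certify monotonicity and hence locate the minimum; this yields $\min|j(x+0.75i)-j(e^{i\theta})|\ge 176$ on $[\pi/2,1.9]$ and $\min|j(x+0.65i)-j(e^{i\theta})|\ge 311$ on $[1.9,2\pi/3]$, after which $B_1=3.94$, $B_2=5.12$, and $c_2\approx 9.11\le 9.5$.
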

\subsection{Outline of the Proof of Theorem \ref{QuanMR}}
As shown in Proposition \ref{mrl}, we have $c_{1}=\frac{\pi}{2\log(10/7)}\le 4.5$. Thus, we are left to quantify $c_{2}$.
To determine $c_{2}$, we need to quantify the bounds $B_{1}$ in \eqref{B75}
 \begin{equation}
     \max_{\abs{x}\le\frac{1}{2}}\left|\frac{E_{k'}(e^{i\theta})E_{14-k'}(x+0.75i)}{\Delta(x+0.75i)\left(j(x+0.75i)-j(e^{i\theta})\right)}\right|<e^{B_{1}}, \tag{\ref{B75}}
 \end{equation}
and $B_{2}$ in \eqref{B65}
 \begin{equation*}
     \max_{\abs{x}\le\frac{1}{2}}\left|\frac{E_{k'}(e^{i\theta})E_{14-k'}(x+0.65i)}{\Delta(x+0.65i)\left(j(x+0.65i)-j(e^{i\theta})\right)}\right|<e^{B_{2}}. \tag{\ref{B65}}
 \end{equation*}
 We will do so in the following manner:
 \begin{enumerate}
     \item In \S\ref{j bounds}, we will find a lower bound to the difference
     \[\min_{\abs{x}\le \frac{1}{2}}\abs{j\pr{x+0.75i}-j\pr{e^{i\theta}}}\]
     where $\theta\in[\pi/2,1.9]$, and to
     \[\min_{\abs{x}\le \frac{1}{2}}\abs{j\pr{x+0.65i}-j\pr{e^{i\theta}}}\]
     where $\theta\in[1.9,2\pi/3]$.
     \item In \S\ref{E_k monotinicity}, \S\ref{E_4 bounds} and \S\ref{E_6 bounds}, we will find upper bounds for $E_{4}$ and $E_{6}$ on the arc $\cA$, and on the horizontal lines of height $0.65$ and $0.75$. Those bounds, together with the identities, 
     \begin{align*}
         E_{8} & = E_{4}^2 ,\\
         E_{10} & = E_{4}E_{6}, \\
         E_{14} & = E_{4}^{2}E_{6}, 
     \end{align*}
    would yield upper bounds for $E_{k'}$ for all $k'\in \{0,4,6,8,10,14\}$.
     \item We will use Proposition \ref{tau bounds} to obtain a lower bound on $\Delta(x+0.65i)$ and $\Delta(x+0.75i)$.
 \end{enumerate}
Lastly, we will substitute the bounds above in \eqref{B75} and \eqref{B65} and get $B_{1}=3.94$ and $B_{2}=5.12$.
\subsection{Bounds for the $j$-function}\label{j bounds} 
We begin with a few basic facts on the $j$-function:
\begin{enumerate}
    \item The $q$-expansion of $j$ has integer coefficients, thus $\overline{j(x+iy)} = j(-x +iy)$.
    \item  The $j$-function is injective on the fundamental domain and onto $\C$.
    \item  $j(\tau)$ is real if and only if $\tau$ lies on the boundary of the fundamental domain or the imaginary line. Specifically, $j$ maps the line $\left\{-\frac{1}{2}+it:t>\frac{\sqrt{3}}{2}\right\}$ onto $(-\infty,0)$, the arc $\cA=\left\{e^{i\theta}:\frac{\pi}{2}\le \theta\le \frac{2\pi}{3}\right\}$ onto the interval $[0,1728]$, and the line $\left\{it:t>1\right\}$ onto $(1728,\infty)$.
    \item For all $n\ge 1$ we have $1\le c\pr{n}\le \frac{e^{4\pi\sqrt{n}}}{\sqrt{2}n^{3/4}}$, where $c\pr{n}$ are the Fourier coefficients of $j$. This was proven in \cite{jbounds}. 
\end{enumerate}

Our goal in this section is to prove the following proposition:
\begin{prop}\label{lemma:j bounds}
\begin{enumerate}[label = (\roman*)]
    \item For all $\theta\in\left[\frac{\pi}{2},1.9\right]$ we have
    \begin{equation}\label{eq: j-diff 75}
        \min_{x\in \left[-0.5,0.5\right]}\abs{j(x+0.75i)-j(e^{i\theta})}\ge 176.
    \end{equation}
    \item For all $\theta\in\left[1.9,\frac{2\pi}{3}\right]$ we have
    \begin{equation}\label{eq: j-diff 65}
        \min_{x\in \left[-0.5,0.5\right]}\abs{j(x+0.65i)-j(e^{i\theta})}\ge 311.
    \end{equation}
\end{enumerate}
\end{prop}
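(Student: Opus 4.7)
The plan is to reduce both inequalities to numerical checks using the $q$-expansion of $j$ and modular symmetry. First, since $j$ has real Fourier coefficients, $\overline{j(x+iy)}=j(-x+iy)$, so the curve $\{j(x+iy_0):x\in[-1/2,1/2]\}$ is symmetric about $\R$, and it suffices to consider $x\in[0,1/2]$. Because $j$ maps the arc $\cA$ monotonically onto $[0,1728]$, the two relevant arc images are $[j(e^{1.9i}),1728]$ in case (i) and $[0,j(e^{1.9i})]$ in case (ii), sharing the value $j(e^{1.9i})\approx 266$, which can be computed from the $q$-expansion to any required precision.

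Next I would handle the two real points of the horizontal segment $x+iy_0$. At $x=0$, the point $iy_0$ is modular-equivalent to $i/y_0$, so $j(iy_0)\in\R$ is much larger than $1728$ (approximately $5140$ when $y_0=0.75$ and $16370$ when $y_0=0.65$), hence far from either arc image. At $x=1/2$, successive applications of $\tau\mapsto \tau-1,\,-1/\tau,\,\tau-1$ show that $1/2+iy_0$ is modular-equivalent to the arc point $e^{i\phi_0}$ with $\phi_0=\pi-\arctan(4y_0/(4y_0^2-1))$; numerically $\phi_0\approx 1.966$ when $y_0=0.75$ and $\phi_0\approx 1.831$ when $y_0=0.65$. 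Crucially, $\phi_0$ lies strictly outside the relevant $\theta$-range in each case, so $j(1/2+iy_0)$ is a real value outside the corresponding arc image. Evaluating the $q$-expansion at $q=-e^{-2\pi y_0}\in\R$ yields approximate distances of $180$ in case (i) and $330$ in case (ii).

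The remaining step is to show that for intermediate $x\in(0,1/2)$ the curve $\gamma(x)=j(x+iy_0)$ does not come any closer to the arc image than its endpoint values. I would bound both $|\operatorname{Im}\gamma(x)|$ and $|\operatorname{Re}\gamma(x)-c|$ (for $c$ the nearer real-axis endpoint of the arc image) using the truncated expansion
\[
j(\tau)=q^{-1}+744+\sum_{n=1}^{N} c(n)\,q^n + R_N(\tau), \qquad |R_N(\tau)|\le\sum_{n>N}\frac{e^{4\pi\sqrt{n}-2\pi n y_0}}{\sqrt{2}\,n^{3/4}},
\]
which follows from the bound on $c(n)$ recalled at the start of \S\ref{j bounds}. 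Combined with a Lipschitz estimate for $\gamma$ via $j'(\tau)=-2\pi i E_{14}(\tau)/\Delta(\tau)$, evaluation on a sufficiently fine grid in $[0,1/2]$ then certifies the uniform lower bound.

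The main obstacle is the slow convergence of the $q$-expansion at these heights: the terms $|c(n)q^n|$ behave like $\exp(\pi(4\sqrt{n}-2n y_0))/(\sqrt{2}\,n^{3/4})$ and peak near $n\approx 4/y_0^2\in[7,10]$, so many coefficients must be computed before the tail becomes small. Moreover, the numerical slack between the endpoint distances and the claimed thresholds is only about $4$ in case (i) and $19$ in case (ii), while $|\gamma'(x)|$ near $x=1/2$ is of order $10^3$, so both the grid spacing and the truncation error must be controlled tightly in order to produce a rigorous certificate.
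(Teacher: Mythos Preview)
Your plan is sound and shares the paper's setup: the symmetry reduction to $x\in[0,1/2]$, the identification of the arc images with real intervals via the monotonicity of $\theta\mapsto j(e^{i\theta})$, and the truncation of the $q$-expansion with a rigorous tail bound coming from $c(n)\le e^{4\pi\sqrt n}/(\sqrt2\,n^{3/4})$. Your endpoint analysis at $x=0$ and $x=1/2$ via modular equivalence is correct and gives the right orders of magnitude (though $j(e^{1.9i})\approx 271$, not $266$).

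The difference lies in how the intermediate $x$ are handled. You propose a Lipschitz-plus-grid certification using $j'=-2\pi iE_{14}/\Delta$; this would work, but as you note the margins are thin (a few units in case~(i)) and $|j'|$ is of order $10^3$, so the grid must be quite fine. The paper avoids this by an algebraic monotonicity argument: writing the truncation $f_{M,a}(x)$ (or $|f_{M,a}(x)|^2$) as a polynomial in $z=\cos(2\pi x)$ via Chebyshev substitution, it applies Goursat's transform to certify the sign of the derivative on $[-1,1]$ from the sign pattern of a single auxiliary polynomial's coefficients. For case~(i) this gives that $\Re f$ is unimodal on $[0,1/2]$, so three subintervals $[0,0.1]$, $[0.1,0.2]$, $[0.2,0.5]$ suffice (real-part bound on the outer two, imaginary-part bound on the middle one); for case~(ii) it gives that $|f_{7,0.65}|$ is monotone decreasing on all of $[0,1/2]$, reducing everything to the single endpoint $x=1/2$.

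In short: both routes are valid. The paper's Chebyshev--Goursat device trades your fine numerical grid for a handful of exact polynomial-coefficient sign checks, which is what makes the verification clean despite the small slack.
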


Plotting the function $x\mapsto\abs{j\pr{x+0.75i}-j\pr{e^{i\theta}}}$ for various values of $\theta\in \prb{\frac{\pi}{2},1.9}$, see Figure \ref{fig:various values}, we can see that this function is decreasing on $\prb{0,0.5}$ and attains its minimum at $x=\frac{1}{2}$.
\begin{figure}[ht]
    \centering
    \includegraphics[height=65mm]{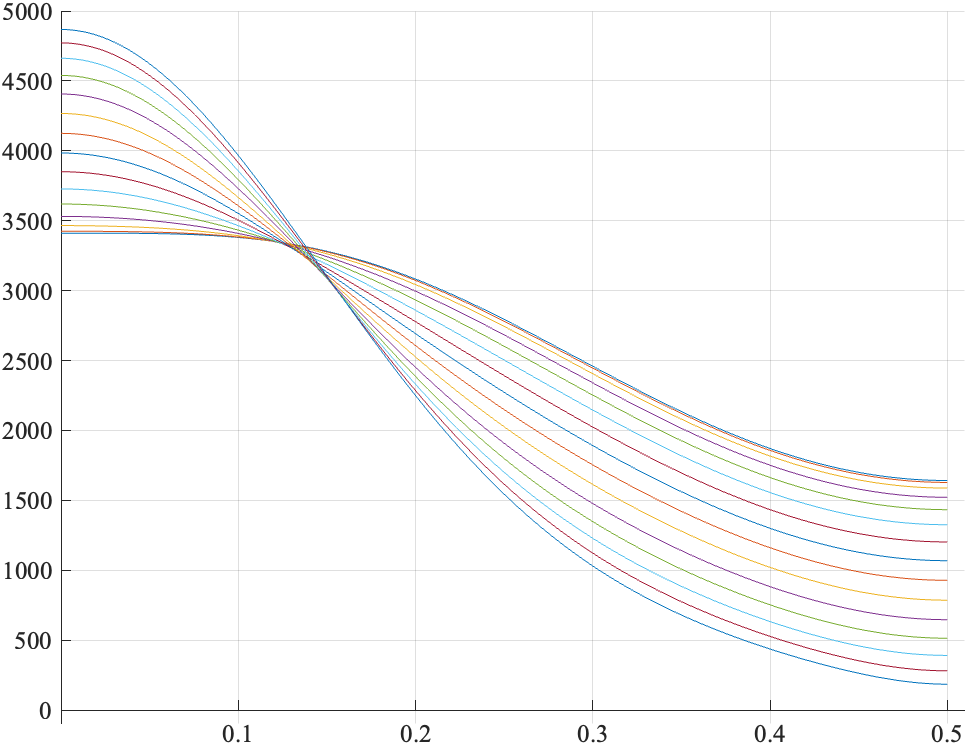}
    \caption{The function $x\mapsto\abs{j\pr{x+0.75i}-j\pr{e^{i\theta}}}$ for various values of $\theta\in \prb{\frac{\pi}{2},1.9}$.}
    \label{fig:various values}
\end{figure}
However, we have not been able to prove this. Hence, we pose the following conjecture:
\begin{conj}\label{conj: j mono}
    \begin{enumerate}[label = (\roman*)]
        \item For all $\theta \in \prb{\frac{\pi}{2},1.9}$, the function $x\mapsto\abs{j\pr{x+0.75i}-j\pr{e^{i\theta}}}$ is decreasing on $\prb{0,0.5}$.
        \item For all $\theta\in \prb{1.9,\frac{2\pi}{3}}$, the function $x\mapsto\abs{j\pr{x+0.65i}-j\pr{e^{i\theta}}}$ is decreasing on $\prb{0,0.5}$.
    \end{enumerate}
\end{conj}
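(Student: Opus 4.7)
Fix $y \in \{0.65, 0.75\}$ and $w = j(e^{i\theta}) \in [0, 1728]$, and set $F(x) := |j(x+iy) - w|^2$. Writing $j(x + iy) = P(x) + iQ(x)$,
\begin{equation*}
F'(x) = 2P'(x)\bigl(P(x) - w\bigr) + 2Q(x) Q'(x),
\end{equation*}
so the conjecture is equivalent to $F'(x) \le 0$ on $[0, 1/2]$ with strict inequality on the interior. The symmetries of $j$ (integer Fourier coefficients plus $1$-periodicity) imply that $P$ is even around $x=0$ and symmetric around $x=1/2$, while $Q$ is odd around $x=0$ and antisymmetric around $x=1/2$; in particular $P'(0)=P'(1/2)=0$ and $Q(0)=Q(1/2)=0$, so $F'$ automatically vanishes at both endpoints and all the work is in interior sign control. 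The endpoint values $j(iy)$ and $j(1/2 + iy)$ are real, corresponding via $S$- and $TS$-equivalence to points on the imaginary axis above $i$ and on the arc $\cA$ respectively.

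The first analytic step is a Taylor expansion at the endpoints. Set $\phi(t) := j(it) = e^{2\pi t} + 744 + \sum_{n \ge 1} c(n) e^{-2\pi n t}$; this is strictly convex on $(0,\infty)$ with minimum $\phi(1) = 1728$. Expanding $P, Q$ around $x = 0$ gives
\begin{equation*}
F''(0) = 2\phi'(y)^2 - 2\phi''(y)\bigl(\phi(y) - w\bigr).
\end{equation*}
For fixed $y$ this is largest in the unhelpful direction when $w = 1728$, so to obtain $F''(0) \le 0$ uniformly in $w \in [0, 1728]$ it suffices to verify $\phi'(y)^2 \le \phi''(y)(\phi(y) - 1728)$. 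Using the functional equation $\phi(1/t) = \phi(t)$ to transfer the computation to $t = 1/y > 1$, where the $q$-expansion converges geometrically fast, one checks this inequality at $y \in \{0.65, 0.75\}$. A parallel expansion around $x = 1/2$ yields $F''(1/2) \ge 0$ by the analogous estimate at the real value $j(1/2 + iy)$.

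The second and harder step is to propagate local sign control across all of $(0, 1/2)$. The most promising route is the factorisation $j - w = (E_4^3 - w\Delta)/\Delta$. Writing $q = r e^{2\pi i x}$ with $r = e^{-2\pi y}$, the $n=1$ factor of $\Delta(\tau) = q \prod_{n \ge 1}(1 - q^n)^{24}$ contributes $\tfrac{48\pi r \sin(2\pi x)}{|1 - q|^2}$ to $\tfrac{d}{dx}\log|\Delta(x+iy)|$, which is strictly positive on $(0, 1/2)$ and, because $r$ is small for $y \in \{0.65, 0.75\}$, dominates the tail $\sum_{n \ge 2}$; hence $|\Delta(x+iy)|$ is strictly increasing on $[0, 1/2]$. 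The conjecture then reduces to proving that $|E_4^3(x+iy) - w\Delta(x+iy)|$ decreases at least as fast as $|\Delta(x+iy)|$ grows. As a backup, one could bound $F'(x)$ directly by truncating the $q$-expansions of $P, Q, P', Q'$ and controlling tails via $c(n) \le e^{4\pi\sqrt{n}}/(\sqrt{2}\, n^{3/4})$ (recalled in \S\ref{j bounds}), reducing the conjecture to a finite trigonometric inequality amenable to interval arithmetic.

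The main obstacle is the corner case $y = 0.75$, $w = 1728$ (i.e.\ $\theta = \pi/2$): numerical evaluation suggests that $\phi'(0.75)^2$ and $\phi''(0.75)\bigl(\phi(0.75) - 1728\bigr)$ agree to many digits, so $F''(0)$ is essentially degenerate at that corner and any purely second-order Taylor argument collapses. To close the proof in this regime one would need either to go to higher order at $x = 0$ (computing $F^{(3)}(0)$ and $F^{(4)}(0)$ explicitly and combining with a sharp global estimate on $F'$), or to supplement the analytic argument with a certified computer-assisted verification on the slice $\{y = 0.75\}$ using rigorous tail bounds from the $q$-expansion.
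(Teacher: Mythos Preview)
The paper does not prove this statement: it is explicitly posed as an open conjecture. Immediately after stating it the paper says ``we have not been able to prove this'' and proceeds to prove the weaker Proposition~\ref{lemma:j bounds} by a different route (truncating the $q$-expansion of $j$ via Lemma~\ref{lemma: approximate j} and bounding the resulting trigonometric polynomials with Chebyshev/Goursat arguments). So there is no paper proof for your proposal to be compared against.

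As to your proposal itself: it is a strategy sketch, not a proof, and you are candid about this. Two of the three steps are left essentially open. For the ``second and harder step'' you only assert that the $n=1$ factor dominates the logarithmic derivative of $|\Delta|$ and then state what \emph{would} remain to be shown about $|E_4^3 - w\Delta|$; no argument is given for that residual inequality, and there is no obvious reason it should hold uniformly in $w$ over the whole interval. Your ``backup'' of truncating and using interval arithmetic is exactly the kind of thing the paper does for the \emph{weaker} Proposition~\ref{lemma:j bounds}, and even there the analysis is delicate; carrying it through for the full monotonicity statement would require controlling the sign of a derivative rather than just a lower bound on a modulus, which is strictly harder. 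Finally, you yourself flag the corner $(y,\theta)=(0.75,\pi/2)$ as a point where the second-order argument degenerates, and you do not resolve it. In short, the proposal identifies reasonable ingredients but does not close any of the gaps that make this a conjecture in the first place.
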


Assuming Conjecture \ref{conj: j mono} would simplify the proof of Proposition \ref{lemma:j bounds}. Since we do not know the conjecture at this time, instead of using monotonicity, we approximate $j$ with short trigonometric polynomials and obtain the bounds with the assistance of a computer.

\begin{lemma}\label{lemma: approximate j}
    For all $a>0$, and any integer $M\ge 1$, we denote 
    \[f_{M,a}\pr{x}=\sum_{n=-1}^{M}c\pr{n}e^{-2\pi an}e^{2\pi inx},\]
    where $c(n)$ are the coefficients of the $j$-function.
    For any $a>0$ and any integer $M>\frac{1}{a^2}$,
    \begin{equation*}
        \abs{j\pr{x+ai}-f_{M,a}\pr{x}}<
        \frac{e^{2\pi\pr{\frac{1}{a} -a\pr{\sqrt{M}-\frac{1}{a}}^{2}}}}{2\sqrt{2}\pi\pr{M+1}^{3/4}}\frac{\sqrt{M}}{a\sqrt{M}-1}.
    \end{equation*}
\end{lemma}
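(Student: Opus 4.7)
The plan is to expand the $j$-function as a Fourier series in $q=e^{2\pi i\tau}$ and compare directly with the truncation $f_{M,a}$. Substituting $\tau=x+ai$ into $j\pr{\tau}=q^{-1}+744+\sum_{n\ge 1}c\pr{n}q^{n}$ gives an absolutely convergent series on the horizontal line $\Im\pr{\tau}=a$, of which $f_{M,a}\pr{x}$ is precisely the partial sum through index $M$ (taking $c\pr{0}=744$). Hence $j\pr{x+ai}-f_{M,a}\pr{x}=\sum_{n=M+1}^{\infty}c\pr{n}e^{-2\pi an}e^{2\pi inx}$. Applying the triangle inequality together with the coefficient bound $c\pr{n}\le \frac{e^{4\pi\sqrt{n}}}{\sqrt{2}\,n^{3/4}}$ from fact $(4)$, and using $n^{3/4}\ge\pr{M+1}^{3/4}$ throughout the tail, one obtains
\[
\abs{j\pr{x+ai}-f_{M,a}\pr{x}}\le \frac{1}{\sqrt{2}\,\pr{M+1}^{3/4}}\sum_{n=M+1}^{\infty}e^{4\pi\sqrt{n}-2\pi an}.
\]

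Next, I would complete the square in the exponent: $4\pi\sqrt{n}-2\pi an=\frac{2\pi}{a}-2\pi a\pr{\sqrt{n}-\frac{1}{a}}^{2}$. Since $M>1/a^{2}$, the map $t\mapsto e^{4\pi\sqrt{t}-2\pi at}$ is monotonically decreasing on $\prb{M,\infty}$ (its logarithmic derivative $\frac{2\pi}{\sqrt{t}}-2\pi a$ is negative there), so the tail sum is majorized by $\int_{M}^{\infty}e^{4\pi\sqrt{t}-2\pi at}\,dt$. Substituting first $u=\sqrt{t}$ and then $v=u-1/a$ converts this integral into
\[
2e^{2\pi/a}\int_{\sqrt{M}-1/a}^{\infty}\pr{v+\tfrac{1}{a}}e^{-2\pi av^{2}}\,dv,
\]
where the lower limit is strictly positive precisely because of the hypothesis $M>1/a^{2}$.

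To finish, I would bound the remaining Gaussian-type integral by splitting it into two pieces. The $v$-weighted piece has an elementary antiderivative and equals $\frac{1}{4\pi a}e^{-2\pi a\pr{\sqrt{M}-1/a}^{2}}$, while the pure Gaussian tail is controlled via the classical estimate $\int_{v_{0}}^{\infty}e^{-2\pi av^{2}}\,dv\le \frac{1}{v_{0}}\int_{v_{0}}^{\infty}v\,e^{-2\pi av^{2}}\,dv=\frac{e^{-2\pi av_{0}^{2}}}{4\pi av_{0}}$, applied with $v_{0}=\sqrt{M}-1/a$. Adding the two contributions produces a common factor $\frac{e^{-2\pi a\pr{\sqrt{M}-1/a}^{2}}}{4\pi a}$ multiplied by $1+\frac{1}{a\sqrt{M}-1}=\frac{a\sqrt{M}}{a\sqrt{M}-1}$; propagating this through the prefactors $2e^{2\pi/a}$ and $\frac{1}{\sqrt{2}\,\pr{M+1}^{3/4}}$ recovers exactly the bound in the statement. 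No step here is deep; the only genuine obstacle is keeping the bookkeeping of the completed square, the two substitutions $u=\sqrt{t},\ v=u-1/a$, and the Gaussian tail inequality aligned so that the prefactors combine into the claimed clean form.
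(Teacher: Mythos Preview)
Your proposal is correct and follows essentially the same path as the paper's proof: apply the coefficient bound $c(n)\le e^{4\pi\sqrt{n}}/(\sqrt{2}\,n^{3/4})$, complete the square in the exponent, majorize the tail sum by $\int_{M}^{\infty}$ using monotonicity (valid since $M>1/a^{2}$), and finish with the standard Gaussian-tail trick $\int_{v_{0}}^{\infty}e^{-2\pi av^{2}}\,dv\le \frac{1}{4\pi av_{0}}e^{-2\pi av_{0}^{2}}$. The only cosmetic difference is that you substitute $u=\sqrt{t}$, $v=u-1/a$ first and then split $(v+1/a)$ into its two pieces, whereas the paper splits the integrand algebraically via $1=\bigl(1-\tfrac{1}{a\sqrt{x}}\bigr)+\tfrac{1}{a\sqrt{x}}$ before substituting; the two computations are line-by-line equivalent.
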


\begin{proof}
    Let $a>0$, and let $M>\frac{1}{a^2}$ an integer. For all $n\ge 1$ we have $1\le c\pr{n}\le \frac{e^{4\pi\sqrt{n}}}{\sqrt{2}n^{3/4}}$. Hence,
    \begin{multline*}
        \abs{j\pr{x+ai}-f_{M,a}\pr{x}} \le \sum_{n=M+1}^{\infty}c\pr{n}e^{-2\pi an} \le \sum_{n=M+1}^{\infty}\frac{e^{4\pi\sqrt{n}}}{\sqrt{2}n^{3/4}}e^{-2\pi an}\\ \le \frac{1}{\sqrt{2}\pr{M+1}^{3/4}}\sum_{n=M+1}^{\infty}e^{4\pi\sqrt{n}-2\pi an}.
    \end{multline*}
    For all $n\ge 0$, we have \[4\pi\sqrt{n}-2\pi an = -2\pi a\pr{\sqrt{n}-\frac{1}{a}}^{2}+\frac{2\pi}{a}\]
    and therefore 
    \begin{equation}\label{eq: sum bound on japprox}
        \abs{j\pr{x+ai}-f_{M,a}\pr{x}}\le \frac{e^{\frac{2\pi}{a}}}{\sqrt{2}\pr{M+1}^{3/4}}\sum_{n=M+1}^{\infty}e^{-2\pi a\pr{\sqrt{n}-\frac{1}{a}}^2}.
    \end{equation}
    The function $x\mapsto e^{-2\pi a\pr{\sqrt{x}-\frac{1}{a}}^2}$ is decreasing on the interval $\pr{\frac{1}{a^2},\infty}$, and since $M>\frac{1}{a^2}$ we can bound the sum on the RHS of \eqref{eq: sum bound on japprox} by the integral from $M$ to $\infty$:
    \begin{equation}\label{eq: int-sum bound on japprox}
        \sum_{n=M+1}^{\infty}e^{-2\pi a\pr{\sqrt{n}-\frac{1}{a}}^2}<\int_{M}^{\infty} e^{-2\pi a\pr{\sqrt{x}-\frac{1}{a}}^2}dx.
    \end{equation}
    Now, 
    \begin{multline*}
        \int_{M}^{\infty} e^{-2\pi a\pr{\sqrt{x}-\frac{1}{a}}^2}dx  = \frac{-1}{2\pi a}\int_{M}^{\infty}\frac{-2\pi a}{\sqrt{x}}\pr{\sqrt{x}-\frac{1}{a}}e^{-2\pi a\pr{\sqrt{x}-\frac{1}{a}}^2}dx \\+ \frac{1}{a}\int_{M}^{\infty}\frac{1}{\sqrt{x}}e^{-2\pi a\pr{\sqrt{x}-\frac{1}{a}}^2}dx.
    \end{multline*}
    Evaluating the first integral on the right-hand side, we get:
    \[ \frac{-1}{2\pi a}\int_{M}^{\infty}\frac{-2\pi a}{\sqrt{x}}\pr{\sqrt{x}-\frac{1}{a}}e^{-2\pi a\pr{\sqrt{x}-\frac{1}{a}}^2}dx = \frac{e^{-2\pi a\pr{\sqrt{M}-\frac{1}{a}}^2}}{2\pi a},\]
    and one can easily obtain: 
    \begin{multline*}
        \frac{1}{a}\int_{M}^{\infty}\frac{1}{\sqrt{x}}e^{-2\pi a\pr{\sqrt{x}-\frac{1}{a}}^2}dx =\frac{2}{a}\int_{\sqrt{M}-\frac{1}{a}}^{\infty}e^{-2\pi at^2}dt \\\le \frac{1}{2\pi a^2\pr{\sqrt{M}-\frac{1}{a}}}\int_{\sqrt{M}-\frac{1}{a}}^{\infty}4\pi at e^{-2\pi at^2}dt = \frac{e^{-2\pi a\pr{\sqrt{M}-\frac{1}{a}}^2}}{2\pi a\pr{a\sqrt{M}-1}}.
    \end{multline*}
    Hence,
    \begin{multline}\label{eq: int bound on japprox}
        \int_{M}^{\infty} e^{-2\pi a\pr{\sqrt{x}-\frac{1}{a}}^2}dx \le \frac{e^{-2\pi a\pr{\sqrt{M}-\frac{1}{a}}^2}}{2\pi a}+\frac{e^{-2\pi a\pr{\sqrt{M}-\frac{1}{a}}^2}}{2\pi a\pr{a\sqrt{M}-1}}\\ = \frac{e^{-2\pi a\pr{\sqrt{M}-\frac{1}{a}}^2}}{2\pi a}\frac{a\sqrt{M}}{a\sqrt{M}-1}.
    \end{multline}
     Finally, we combine \eqref{eq: sum bound on japprox}, \eqref{eq: int-sum bound on japprox}, and \eqref{eq: int bound on japprox} to obtain
     \[\abs{j\pr{x+ai}-f_{M,a}\pr{x}}<
        \frac{e^{2\pi\pr{\frac{1}{a} -a\pr{\sqrt{M}-\frac{1}{a}}^{2}}}}{2\sqrt{2}\pi\pr{M+1}^{3/4}}\frac{\sqrt{M}}{a\sqrt{M}-1}.\qedhere\]
\end{proof}

\begin{proof}[Proof of Proposition \ref{lemma:j bounds}]
Since the $j$-function is real on the arc, the function $x\mapsto \abs{j(x+iy)-j(e^{i\theta})}$ is an even function. Thus,
\begin{equation}\label{j diff}
    \min_{\abs{x}\le \frac{1}{2}}\abs{j(x+iy)-j(e^{i\theta})} = \min_{x\in\left[0,0.5\right]}\abs{j(x+iy)-j(e^{i\theta})}.
\end{equation}
Furthermore, since $j$ is injective on the fundamental domain and real on its boundary, the function $\theta\mapsto j(e^{i\theta})$ is decreasing on $\left[\frac{\pi}{2},\frac{2\pi}{3}\right]$, as $j(i)=1728$ and $j(\rho)=0$. Hence, our goal is to find a lower bound to the expressions
\[\min_{t\in\prb{j\pr{e^{1.9i}},1728}}\min_{x\in \prb{-0.5,0.5}}\abs{j\pr{x+0.75i}-t},\]
and 
\[\min_{t\in\prb{0,j\pr{e^{1.9i}}}}\min_{x\in \prb{-0.5,0.5}}\abs{j\pr{x+0.65i}-t}.\] 
Let us estimate $j\pr{e^{1.9}}$: Using Lemma \ref{lemma: approximate j} with $a=\sin\pr{1.9}$ and $M=6$, we get
\[\abs{j\pr{e^{1.9i}}-\Re\pr{f_{6,\sin\pr{1.9}}\pr{\cos\pr{1.9}}}}\le \abs{j\pr{e^{1.9i}}-{f_{6,\sin\pr{1.9}}\pr{\cos\pr{1.9}}}}<4\cdot 10^{-4}.\]
We have
\begin{multline*}
    \Re\pr{f_{6,\sin\pr{1.9}}\pr{\cos\pr{1.9}}} = \sum_{n=-1}^{6}c\pr{n}e^{-2\pi n\sin\pr{1.9}}\cos\pr{2\pi n\cos\pr{1.9}} \\
    = 271.09885 \ldots.
\end{multline*}
Hence,
\begin{equation}\label{eq: j bounds 1.9}
    271 \le j\pr{e^{1.9i}}\le 272.
\end{equation}

\begin{enumerate}[label = (\roman*)]
\item \emph{The case $y=0.75$ and $\theta \in \left[\frac{\pi}{2},1.9\right]$:}
In this case $j(e^{i\theta})\in\left[j(e^{1.9i}),1728\right]$ and by \eqref{eq: j bounds 1.9} we know $j(e^{1.9i})\ge 271$. Using Lemma \ref{lemma: approximate j} with $a=0.75$ and $M=5$, we get 
\[\abs{j\pr{x+0.75i} - f_{5,0.75}\pr{x}} < 10.\]
Denote $f\pr{x}=f_{5,0.75}\pr{x}$, we have 
\begin{multline*}
    f\pr{x} = e^{\frac{3\pi}{2}}e^{-2\pi ix}+744+196884e^{-\frac{3\pi}{2}}e^{2\pi ix}\\
    +21493760e^{-{3\pi}}e^{4\pi ix}+864299970e^{-\frac{9\pi}{2}}e^{6\pi ix}\\+20245856256e^{-{6\pi}}e^{8\pi ix}+333202640600e^{-\frac{15\pi}{2}}e^{10\pi ix}.
\end{multline*}
Examining the plot of the real and imaginary parts of $f(x)$ on $\left[0,0.5\right]$ (see Figure \ref{fig:jontheline075}), we can see that a bound to \eqref{j diff} is achievable by subdividing the interval $\left[0,0.5\right]$.
\begin{figure}[ht]
    \begin{center}
    \includegraphics[height=65mm]{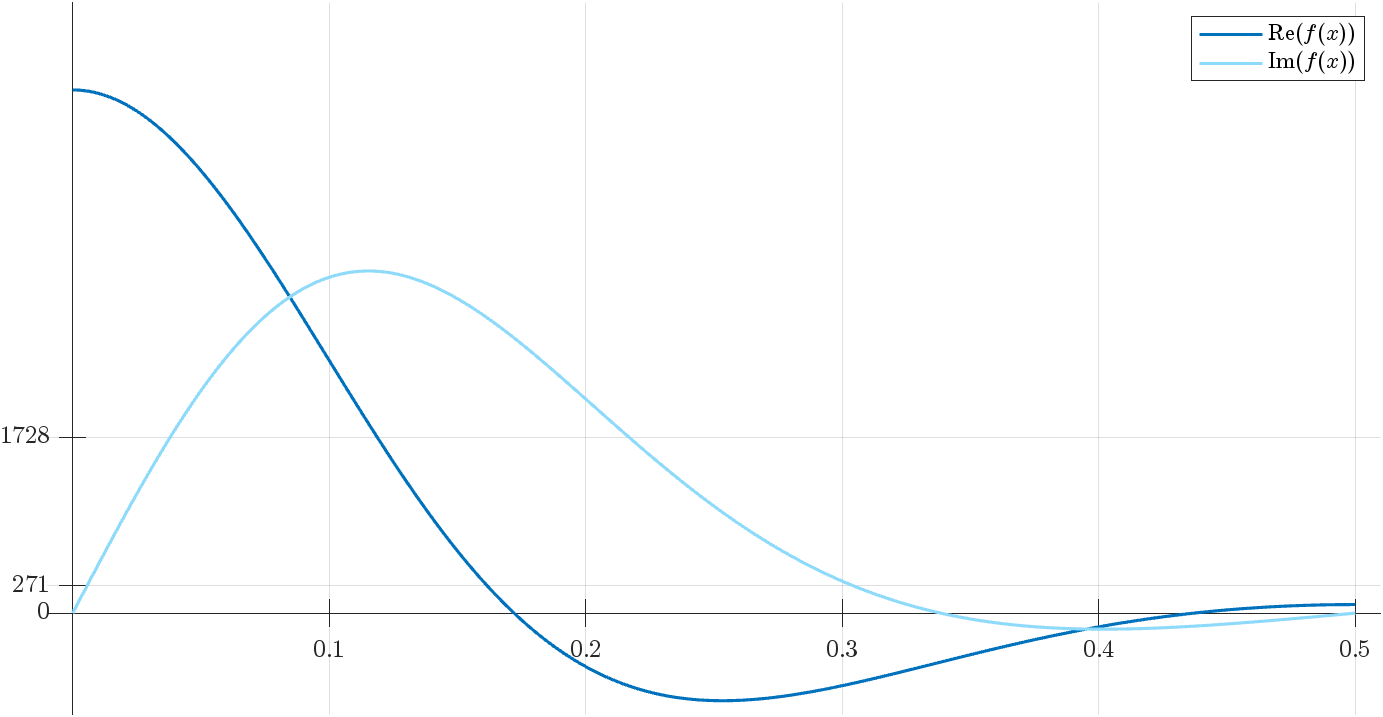}
    \caption{The real and imaginary parts of $f(x)$ on $\left[0,0.5\right]$}
    \label{fig:jontheline075}
    \end{center}
\end{figure}

To begin, we will prove some monotonicity properties on the real part of $f$.
\begin{lemma}\label{real part monotone}
 There exists $x_0\in\pr{0,0.5}$ such that $\Re f$ is monotonically decreasing on $\prb{0,x_0}$ and monotonically increasing on $\prb{x_0,0.5}$. Furthermore,  $x_0$ is approximately $0.253311$.
\end{lemma}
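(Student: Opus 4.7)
The plan is to analyze $(\Re f)'$ as an explicit trigonometric polynomial and show it has exactly one zero in $(0, 1/2)$, with a sign change from negative to positive. Combining the $n=-1$ and $n=1$ harmonics of $f$, one obtains
\[
\Re f(x) = 744 + a_1\cos(2\pi x) + \sum_{n=2}^{5} a_n \cos(2\pi n x),
\]
with $a_1 = e^{3\pi/2} + 196884\,e^{-3\pi/2}$ and $a_n = c(n) e^{-3\pi n/2}$ for $n \ge 2$. Differentiating gives
\[
(\Re f)'(x) = -2\pi \sum_{n=1}^{5} n\, a_n \sin(2\pi n x),
\]
which automatically vanishes at the endpoints $x = 0$ and $x = 1/2$ since $\sin(\pi n) = 0$.

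First I would compute $(\Re f)''(0)$ and $(\Re f)''(1/2)$ and verify that both are strictly negative (the dominant cosine terms are $-a_1$ at $0$ and $a_1 \cdot (-1)^{?}$ adjusted by parity at $1/2$, and a short numerical check confirms the signs). This shows $(\Re f)'$ is strictly negative just to the right of $0$ and strictly positive just to the left of $1/2$, so by continuity it must vanish at least once in $(0, 1/2)$. To prove uniqueness and locate the crossing near $x_0 \approx 0.253311$, I would partition the interval as $[0, a] \cup [a, b] \cup [b, 1/2]$ with $a$ slightly below and $b$ slightly above the numerical estimate for $x_0$. On $[0, a]$ and on $[b, 1/2]$ I would show that $(\Re f)'$ stays strictly negative, respectively strictly positive, by subdividing each piece into smaller windows on which every factor $\sin(2\pi n x)$ is monotonic, and then bounding $(\Re f)'$ term by term using the explicit numerical values of the $a_n$. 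On the narrow middle interval $[a, b]$ I would show $(\Re f)''(x) > 0$ by the same subdivision technique; then $(\Re f)'$ is strictly increasing on $[a, b]$ and therefore has exactly one zero, which is $x_0$.

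The main obstacle is purely bookkeeping: choosing the partition points $a, b$ and the subdivisions fine enough that the pointwise bounds hold uniformly on every window, while still keeping $[a, b]$ narrow enough that $(\Re f)'' > 0$ throughout it. The successive coefficients $n\, a_n$ decay quickly (the ratio is of order $e^{-3\pi/2}$ times the mild growth of $c(n)$ controlled by the bound of \cite{jbounds}), so the first two or three harmonics dominate the sum and the remaining terms can be absorbed comfortably into the error. The numerical value $x_0 \approx 0.253311$ is then pinned down by rigorous bisection against the sign bounds already established.
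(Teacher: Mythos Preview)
Your strategy is sound in principle and would lead to a correct proof, but the paper takes a genuinely different and cleaner route. Rather than staying in the variable $x$ and bounding the trigonometric polynomial $(\Re f)'(x)$ on finely chosen subintervals, the paper performs the substitution $z=\cos(2\pi x)$, using the identity $\cos(2\pi n x)=T_n(z)$ to rewrite $\Re f$ as an explicit degree-$5$ polynomial in $z$. The derivative with respect to $z$ is then a degree-$4$ polynomial $p(z)$, and the question becomes: does $p$ have exactly one sign change on $[-1,1]$? This is settled by Goursat's Lemma: one passes to $\tilde p(z)=(1+z)^4 p\!\left(\frac{1-z}{1+z}\right)$, observes that $\tilde p$ has all negative coefficients except for a positive constant term, and concludes that $\tilde p$ has a unique positive zero, hence $p$ has a unique zero in $(-1,1)$. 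Pulling back through $z=\cos(2\pi x)$ gives the unique $x_0$ and the monotonicity statement immediately.

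What each approach buys: your subdivision argument is elementary and self-contained, but the bookkeeping is genuinely nontrivial here because $a_2$ is of the same order as $a_1$ (so the ``first harmonic dominates'' heuristic is less decisive than you suggest, and the windows must be chosen carefully). The paper's Chebyshev--Goursat reduction replaces all of that with a single coefficient-sign check on a degree-$7$ polynomial, avoiding any interval arithmetic. If you wanted to follow the paper's style, you could shorten your argument substantially by making the same substitution.
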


\begin{proof} For all $x\in\prb{0,0.5}$, we have $\Re f(x)=\sum_{n=0}^{5}a\pr{n}\cos(n\cdot 2\pi x)$ where 
        \begin{align*}
            a\pr{0}& =744, & a\pr{1} & = e^{\frac{3\pi}{2}}+196884e^{-\frac{3\pi}{2}},\\
            & \forall n\ge 2 & a\pr{n} & = e^{-\frac{3\pi}{2}n}c\pr{n}.
        \end{align*}
        For any $x\in\R$, we have $T_{n}\pr{\cos\pr{2\pi x}}=\cos(n\cdot 2\pi x)$ where $T_n$ is the Chebyshev polynomial of the first kind. Hence, 
        \[\Re f(x)=\sum_{n=0}^{5}a\pr{n}\cos(n\cdot 2\pi x)=\sum_{n=0}^{5}a\pr{n}T_{n}\pr{z}, \quad z=\cos\pr{2\pi x}.\]
        It is a polynomial of degree $5$ and
        \[\Re f(x)=311.893 z^5+1054.79 z^4+2116.42 z^3+2414.28 z^2+97.735 z-858.687.\]
        Our goal is to understand the monotonicity of the polynomial on the right-hand side in the interval $\prb{-1,1}$. We will use Goursat's Lemma (see \cite{PowersReznick}): The Goursat transform of a polynomial $P$ is the polynomial 
        \[\Tilde{P}\pr{z} = \pr{1+z}^{\deg P}P\pr{\frac{1-z}{1+z}}.\]
        The map $z\mapsto\frac{1-z}{1+z}$ decreases from $1$ to $-1$ as $z$ increases from $0$ to $\infty$, and $\pr{1+z}^{\deg P}>0$ for all $z\ge 0$. Goursat's Lemma states that the signs of $\Tilde{P}\pr{z_0}$ and $P\pr{\frac{1-z_0}{1+z_{0}}}$ are equal and  $\Tilde{P}\pr{z_0}=0$ if and only if $P\pr{\frac{1-z_0}{1+z_{0}}}=0$.
        Denote
        \[p\pr{z}=\frac{d}{dz}\Re f(x) = 1559.47 z^4+4219.18 z^3+6349.26 z^2+4828.55 z+97.735,\]
        and consider the Goursat transform of $p$, i.e.
        \begin{multline*}
            \Tilde{p}\pr{z} = \pr{1+z}^{4}p\pr{\frac{1-z}{1+z}} \\= -1041.27 z^4-7065.67 z^3-2755.32 z^2-4628.17 z+17054.2
        \end{multline*}
        The coefficients of $\Tilde{p}$ are all negative except for the constant term, which is positive. Hence, $\Tilde{p}$ is decreasing on $\left[0,\infty\right)$. We have $\Tilde{p}\pr{0}=17054.2>0$ and $\lim_{z\to\infty}p\pr{z}=-\infty$, therefore, there exists a unique zero for $\Tilde{p}$ in $\left[0,\infty\right)$ at $z_0$. Computing the roots, we get that $z_0$ is approximately $1.0424883$.
        Recall that the signs of $\Tilde{p}\pr{z}$ and $p\pr{\frac{1-z}{1+z}}$ are equal and  $\Tilde{p}\pr{z}=0$ if and only if $p\pr{\frac{1-z}{1+z}}=0$. Finally, we have $\frac{1-z_0}{1+z_{0}} = -0.0208023$ and thus $p$ is positive on $\prb{-1,-0.0208023}$ and negative on $\prb{-0.0208023,1}$. Therefore, $p$ is increasing on $\prb{-1,-0.0208023}$ and decreasing on $\prb{-0.0208023,1}$.
        Denote $x_0=\frac{1}{2\pi}\arccos\pr{\frac{1-z_0}{1+z_{0}}}= 0.253311\ldots$. The function $z\mapsto\frac{1}{2\pi}\arccos\pr{z}$ is decreasing from $0.5$ to $0$ on $\prb{-1,1}$, and $\Re f\pr{\frac{1}{2\pi}\arccos\pr{z}} = p\pr{z}$ for any $z\in\prb{-1,1}$, thus, $\Re f$ is decreasing on $\prb{0,x_0}$ and increasing on $\prb{x_0,0.5}$.
\end{proof}
Let us subdivide the interval into three parts: $\left[0,0.1\right]$, $\left[0.1,0.2\right]$, and $\prb{0.2,0.5}$. 
\begin{itemize}
    \item $x\in\left[0,0.1\right]$:
    By Lemma \ref{real part monotone}, the function $\Re f$ is decreasing on $\prb{0,0.1}$ and therefore $\Re f\pr{x} \ge \Re f\pr{0.1} = 2481.16>2000$. Thus,
    \begin{equation*}
        \Re(j(x+0.75i)) \ge \Re(f(x)) - 10 > 2000 -10 =1990.
    \end{equation*}
    Hence, for all $x\in\left[0,0.1\right]$:
    \begin{multline*}
        \abs{j(x+0.75i)-j(e^{i\theta})} \ge \abs{\Re(j(x+0.75i)-j(e^{i\theta}))} \\
        \ge \Re(j(x+0.75i))-j(e^{i\theta}) \ge 1990 - 1728 = 262.
    \end{multline*}
    \item $x\in\left[0.1,0.2\right]$:
        In this case, one needs to consider the imaginary part of $f$:
        \[\Im f\pr{x} = \pr{196884e^{-\frac{3\pi}{2}}-e^{\frac{3\pi}{2}}}\sin\pr{2\pi x}+\sum_{n=2}^{5}e^{-\frac{3\pi}{2}n}c\pr{n}\sin\pr{2\pi nx},\]
        By examining $\sin\pr{2\pi n x}$ on the the intervals $\prb{0.1,0.2}$ for $n\in\prs{1,\ldots,5}$ we can deduce 
        \begin{align*}
            \sin\pr{2\pi x} & \ge \sin\pr{\frac{\pi}{5}}\\
            \sin\pr{2\pi x} & \ge \sin\pr{\frac{4\pi}{5}} \\
            \sin\pr{2\pi x} & \ge \sin\pr{\frac{6\pi}{5}}\\
            \sin\pr{2\pi x} & \ge -1\\
            \sin\pr{2\pi x} & \ge -1
        \end{align*}
        Therefore, 
        \begin{multline*}
            \Im f\pr{x} = \pr{196884e^{-\frac{3\pi}{2}}-e^{\frac{3\pi}{2}}}\sin\pr{2\pi x}+\sum_{n=2}^{5}e^{-\frac{3\pi}{2}n}c\pr{n}\sin\pr{2\pi nx} \\\ge \pr{196884e^{-\frac{3\pi}{2}}-e^{\frac{3\pi}{2}}}\sin\pr{\frac{\pi}{5}} + 21493760 e^{-3 \pi }\sin\pr{\frac{4\pi}{5}} \\+ 864299970e^{-\frac{9\pi}{2}}\sin\pr{\frac{6\pi}{5}} - 20245856256 e^{-6 \pi } \\- 333202640600 e^{-\frac{15 \pi }{2}}=1474.07.
        \end{multline*}
        Hence, $\Im(f(x)) > 1400$, which yields
    \begin{equation*}
        \Im(j(x+0.75i)) \ge \Im(f(x)) -10 > 1400 -10 =1390.
    \end{equation*}
    Hence, for all $x\in\left[0.1,0.2\right]$:
    \begin{equation*}
        \abs{j(x+0.75i)-j(e^{i\theta})} \ge \Im(j(x+0.75i)) \ge 1390.
    \end{equation*}
    \item  $x\in\left[0.2,0.5\right]$:
    By Lemma \ref{real part monotone}, the function $\Re f$ is decreasing on $\prb{0.2,x_0}$ and increasing on $\prb{x_0,0.5}$. Therefore, 
    \[\Re f\pr{x}\le \max{\Re f\pr{0.2},\Re f\pr{0.5}}.\]
    We have $\Re f\pr{0.2} = $ and $\Re f\pr{0.5}=84.3362$. Hence, $\Re f\pr{x} \le 85$. Thus,
    \begin{equation*}
        \Re(j(x+0.75i)) \le \Re(f(x)) + 10 < 85+10 =95.
    \end{equation*}
    Hence, for all $x\in\left[0.2,0.5\right]$:
    \begin{multline*}
        \abs{j(x+0.75i)-j(e^{i\theta})} \ge \abs{\Re(j(x+0.75i)-j(e^{i\theta}))} \\
        \ge j(e^{i\theta})-\Re(j(x+0.75i)) \ge 271 - 95 = 176.
    \end{multline*}
\end{itemize}
Finally, we can conclude that for all $\theta \in \left[\frac{\pi}{2},1.9\right]$: 
\begin{equation}
    \min_{x\in \left[-0.5,0.5\right]}\abs{j(x+0.75i)-j(e^{i\theta})}\ge 176.\tag{\ref{eq: j-diff 75}}
\end{equation}

\item \emph{The case $y=0.65$ and $\theta \in \left[1.9,\frac{2\pi}{3}\right]$:}
In this case $j(e^{i\theta})\in\left[0,j(e^{1.9i})\right]$ and by \eqref{eq: j bounds 1.9} we have $j(e^{1.9i})\le 272$. Using Lemma \ref{lemma: approximate j} with $a=0.65$ and $M=7$, we get 
\[\abs{j\pr{x+0.65i} - f_{7,0.65}\pr{x}} < 10.\]
showing \[\abs{j(x+0.65i)} > \abs{f_{7,0.65}\pr{x}} -10.\]
Denote $g\pr{x} = f_{7,0.65}\pr{x}$, then we have
\begin{multline*}
    g\pr{x} = e^{\frac{13\pi}{10}}e^{-2\pi ix} + 744 + 196884e^{-\frac{13\pi}{10}}e^{2\pi ix} + 21493760e^{-\frac{13\pi}{5}}e^{4\pi ix}\\
    + 864299970e^{-\frac{39\pi}{10}}e^{6\pi ix} +20245856256e^{-\frac{26\pi}{5}}e^{8\pi ix}
    + 333202640600e^{-\frac{13\pi}{2}}e^{10\pi ix}\\ +4252023300096e^{-\frac{39\pi}{5}}e^{12\pi ix} + 44656994071935e^{-\frac{91\pi}{10}}e^{14\pi ix}.
\end{multline*}
Write $g\pr{x} = \sum_{n=-1}^{7}a\pr{n}e^{2\pi inx}$ where $a\pr{n} = e^{-\frac{13\pi}{10}n}c\pr{n}$, then 
\begin{align*}
    \abs{g\pr{x}}^2  & = \pr{\sum_{n=-1}^{7}a\pr{n}e^{2\pi i nx}}\overline{\pr{\sum_{m=-1}^{7}a\pr{m}e^{2\pi i mx}}} \\
    & = \sum_{n,m=-1}^{7}a\pr{n}a\pr{m}e^{2\pi i\pr{n-m}x}\\
    & =\sum_{k=-8}^{8}\sum_{\begin{subarray}{c}
        m=-1\\
        -1\le m+k\le 7
    \end{subarray}}^{8} a\pr{k+m}a\pr{m}e^{2\pi k x}\\
    & =\sum_{m=-1§}^{7}a\pr{m}^{2} + \sum_{k=1}^{8}\sum_{m=-1}^{7-k}a\pr{m}a\pr{m+k}e^{2\pi i kx}+\sum_{k=1}^{8}\sum_{m=k-1}^{7}a\pr{m}a\pr{m-k}e^{-2\pi i kx} \\
    & = \sum_{m=-1§}^{7}a\pr{m}^{2} + \sum_{k=1}^{8}\sum_{m=-1}^{7-k}a\pr{m}a\pr{m+k}\pr{e^{2\pi ikx}+e^{-2\pi ikx}}\\
    & =\sum_{m=0}^{7}a\pr{n}^{2}+2\sum_{k=1}^{8}\sum_{m=-1}^{7-k}a\pr{m}a\pr{m+k}\cos\pr{2\pi kx}.
\end{align*}
Denote $b\pr{0} = \sum_{m=0}^{7}a\pr{n}^{2}$ and $b\pr{k}= 2\sum_{m=-1}^{7-k}a\pr{m}a\pr{m+k}$. Recall that for any $x\in\R$ we have $T_{n}\pr{\cos\pr{2\pi x}}=\cos(n\cdot 2\pi x)$ where $T_n$ is the Chebyshev polynomial of the first kind and write 
\[\abs{g\pr{x}}^2 = \sum_{k=0}^{8}b\pr{k}T_{k}\pr{z}, \quad z = \cos\pr{2\pi x}.\]
Denote $p\pr{z} = \sum_{k=0}^{8}b\pr{k}T_{k}\pr{z}$ then 
\begin{multline*}
    p\pr{z} = 260611.69 z^8+2369295.32 z^7+9445879.6 z^6\\+23312894.25 z^5+43183990.77 z^4+63793471.45 z^3\\+69169139.18 z^2+4.7012209.65z+14780542.62.
\end{multline*}
As before, consider the Goursat transform of $p'$, 
\begin{multline*}
    \widetilde{p'}\pr{z} = \pr{1+z}^{7}p'\pr{\frac{1-z}{1+z}}  = 1707749.94 z^7+23929635.67 z^6\\+1.50914350.75z^5+571404726.126z^4+1339181961.02z^3\\+1805503665.81z^2+1383544170.65z+741376575.025
\end{multline*}
All of the coefficients of $\widetilde{p'}$ are positive, hence $\widetilde{p'}$ is positive on $\left[0,\infty\right)$. Thus, by Goursat's Lemma $p'$ is positive on $\prb{-1,1}$ and therefore $p$ is an increasing function on $\prb{-1,1}$. Since $\abs{g\pr{x}}^2 = p\pr{\frac{1}{2\pi}\arccos\pr{x}}$ and $\frac{1}{2\pi}\arccos\pr{x}$ decreases from $1$ to $-1$ on $\prb{0,0.5}$, the function $x\mapsto\abs{g\pr{x}}^{2}$ is decreasing on $\prb{0,0.5}$. Hence, the function $x\mapsto\abs{g\pr{x}}$ is decreasing on $\prb{0,0.5}$.
Therefore, 
\[\abs{g\pr{x}} \ge \abs{g\pr{0.5}}= 593.543\]
Hence,
\begin{equation}
    \abs{j(x+0.65i)-j(e^{i\theta})} \ge  \abs{j(x+0.65i)} - j(e^{i\theta}) > 593 -10 - 272 = 311. \tag{\ref{eq: j-diff 65}}\qedhere
\end{equation}
\begin{figure}[ht]
    \begin{center}
    \includegraphics[height=75mm]{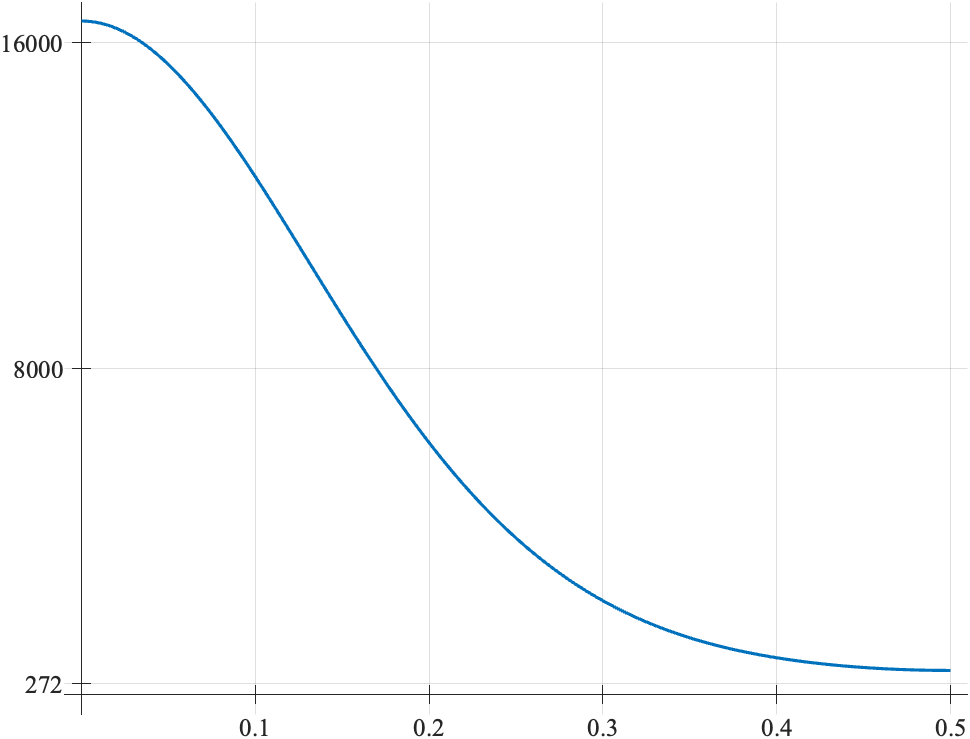}
    \caption{$\abs{g(x)}$ on $\left[0,0.5\right]$}
     \label{fig:6}
    \end{center}
\end{figure}
\end{enumerate}
\end{proof}

\subsection{Monotonicity of $\abs{E_{4}\pr{e^{i\theta}}}$ and $\abs{E_{6}\pr{e^{i\theta}}}$}\label{E_k monotinicity} Recall that for any even $k\ge 4$, we define \[e_{k}\pr{\theta} = e^{\frac{ik\theta}{2}}E_{k}\pr{e^{i\theta}},\]
and for $k=2$, we define \[e_{2}\pr{\theta}=e^{i\theta}E_{2}\pr{e^{i\theta}}+\frac{3}{i\pi}.\]
\begin{lemma}
    \begin{enumerate}[label = (\roman*)]
        \item The function $e_{6}$ is positive for all $\theta\in\left(\frac{\pi}{2},\frac{2\pi}{3}\right]$ and vanishes at $\frac{\pi}{2}$.
        \item The function $\theta\mapsto\abs{E_{4}\pr{e^{i\theta}}}$ is a decreasing function on $\left[\frac{\pi}{2},\frac{2\pi}{3}\right]$. In particular, for all $\theta\in\left[\frac{\pi}{2},1.9\right]$ we have 
        \begin{equation}\label{eq: An upper bound for E_4 arc 0.75}
            \abs{E_{4}\pr{e^{i\theta}}}\le E_{4}\pr{i} = \frac{3\varpi^{4}}{\pi^{4}} = 1.455761\ldots,
        \end{equation}
        with $\varpi = 2\int_{0}^{1}\frac{dx}{\sqrt{1-x^{4}}} = 2.622057\ldots $, and for all $\theta\in\left[1.9, \frac{2\pi}{3}\right]$
        \begin{equation}\label{eq: An upper bound for E_4 arc 0.65}
            \abs{E_{4}\pr{e^{i\theta}}}\le \abs{E_{4}\pr{e^{1.9i}}}=0.900253\ldots.
        \end{equation}
        \item The function $\theta\mapsto\abs{E_{6}\pr{e^{i\theta}}}$ is an increasing function on $\left[\frac{\pi}{2},\frac{2\pi}{3}\right]$. In particular, for all $\theta\in\left[\frac{\pi}{2},1.9\right]$ we have 
        \begin{equation}\label{eq: An upper bound for E_6 arc 0.75}
            \abs{E_{6}\pr{e^{i\theta}}}\le \abs{E_{6}\pr{e^{1.9i}}} =1.980151\ldots,
        \end{equation}
        and for all $\theta\in\left[1.9, \frac{2\pi}{3}\right]$
        \begin{equation}\label{eq: An upper bound for E_6 arc 0.65}
            \abs{E_{6}\pr{e^{i\theta}}}\le \abs{E_{6}\pr{\rho}}=\frac{27\varpi'^{6}}{2\pi^{6}}=2.881536\ldots.
        \end{equation}
        with $\varpi' = 2\int_{0}^{1}\frac{dx}{\sqrt{1-x^{6}}} =2.622057\ldots $.
    \end{enumerate}
\end{lemma}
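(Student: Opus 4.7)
The plan is to follow the template of Lemma~\ref{lemma: e_2,e_4, delta are negative}: for each of the three parts, reduce the claim to a sign analysis of a real-valued auxiliary function, combining the real-Fourier-coefficients argument of \S\ref{forms on the arc} with the Ramanujan identities for $\frac{dE_4}{d\tau}$ and $\frac{dE_6}{d\tau}$. The ``in particular'' bounds then follow from the relevant monotonicity together with the explicit evaluations $E_4(i)=\frac{3\varpi^{4}}{\pi^{4}}$ and $E_6(\rho)=\frac{27\varpi'^{6}}{2\pi^{6}}$ from Proposition~\ref{Prop: Bounds of delta on the arc}, plus a numerical evaluation at $\theta=1.9$ via the rapidly convergent $q$-expansion (the expansion parameter $\abs{q}=e^{-2\pi\sin\theta}$ is at most about $5\cdot 10^{-3}$ on the arc).

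Part~(i) mirrors the proof of Lemma~\ref{lemma: e_2,e_4, delta are negative}(i)--(ii) almost verbatim. Since $E_6$ has real Fourier coefficients, $e_6$ is real-valued on the arc; since the unique zero of $E_6$ in $\cF$ is at $\tau=i$, the function $e_6$ vanishes only at $\pi/2$ on $\prb{\pi/2,2\pi/3}$; and the sign on $\pr{\pi/2,2\pi/3}$ is pinned down by a single evaluation $e_6\pr{2\pi/3}=e^{2\pi i}E_6\pr{\rho}=\frac{27\varpi'^{6}}{2\pi^{6}}>0$.

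For part~(iii), I would differentiate $e_6\pr{\theta}=e^{3i\theta}E_6\pr{e^{i\theta}}$ using $\frac{dE_6}{d\tau}=\pi i\pr{E_2E_6-E_4^{2}}$ and simplify as in the computation of $\frac{de_2}{d\theta}$ in the proof of Lemma~\ref{lemma: e_2,e_4, delta are negative}(iii), which yields
\[\frac{de_6}{d\theta}=\pi\pr{e_4\pr{\theta}^{2}-e_2\pr{\theta}e_6\pr{\theta}}.\]
On $\prb{\pi/2,2\pi/3}$, Lemma~\ref{lemma: e_2,e_4, delta are negative}(iii) gives $e_2\le 0$ and part~(i) above gives $e_6\ge 0$, so $e_2 e_6\le 0$; combined with $e_4^{2}\ge 0$ this forces $\frac{de_6}{d\theta}>0$ on $\pr{\pi/2,2\pi/3}$, so $e_6$ is strictly increasing. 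Since $e_6\ge 0$, we have $\abs{E_6\pr{e^{i\theta}}}=e_6\pr{\theta}$, which is therefore increasing on $\prb{\pi/2,2\pi/3}$.

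Part~(ii) is where I expect the main obstacle. By Lemma~\ref{lemma: e_2,e_4, delta are negative}(i), $e_4\le 0$ on $\prb{\pi/2,2\pi/3}$, so $\abs{E_4\pr{e^{i\theta}}}=-e_4\pr{\theta}$ and monotone decrease is equivalent to $\frac{de_4}{d\theta}\ge 0$. The parallel computation using $\frac{dE_4}{d\tau}=\frac{2\pi i}{3}\pr{E_2E_4-E_6}$ gives
\[\frac{de_4}{d\theta}=\frac{2\pi}{3}\pr{e_6\pr{\theta}-e_2\pr{\theta}e_4\pr{\theta}},\]
but now both $e_6>0$ and $e_2 e_4>0$ on $\pr{\pi/2,2\pi/3}$, so the sign is not immediate. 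I would square the desired inequality $e_6\ge\abs{e_2}\abs{e_4}$ and eliminate $e_6$ using the discriminant relation $e_4^{3}-e_6^{2}=1728\,\delta$, which rearranges to $e_6^{2}=1728\abs{\delta}-\abs{e_4}^{3}$, recasting the claim as
\[1728\,\abs{\delta\pr{\theta}}\ge e_4\pr{\theta}^{2}\pr{\abs{e_4\pr{\theta}}+e_2\pr{\theta}^{2}}.\]
This inequality is sharp at $\pi/2$ (both sides equal $E_4(i)^{3}=1728\abs{\Delta(i)}$ by Proposition~\ref{Prop: Bounds of delta on the arc}, since $e_2\pr{\pi/2}=0$) and strict at $2\pi/3$ (the right-hand side vanishes). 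Proving it throughout the interior analytically seems delicate; I expect either to combine the sharp bounds of Proposition~\ref{Prop: Bounds of delta on the arc} with upper bounds on $\abs{e_2}$ obtained by integrating the formula for $\frac{de_2}{d\theta}$ derived in Lemma~\ref{lemma: e_2,e_4, delta are negative}(iii), or, failing a clean analytical argument, to fall back on a rigorous computer-assisted sweep of the compact interval with interval arithmetic. Once monotonicity is established, the two endpoint bounds follow immediately from the evaluations described in the first paragraph.
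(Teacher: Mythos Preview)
Your plan for parts~(i) and~(iii) is exactly the paper's argument: the sign of $e_6$ is fixed by continuity, nonvanishing on $\pr{\pi/2,2\pi/3}$, and the evaluation $e_6\pr{2\pi/3}=E_6(\rho)>0$; and for~(iii) the paper computes $\frac{de_6}{d\theta}=\pi\bigl(e_4^2-e_2e_6\bigr)$ and concludes positivity from $e_2\le 0$, $e_6\ge 0$, just as you do.

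For part~(ii) you are in fact more careful than the paper. The paper derives the same formula $\frac{de_4}{d\theta}=\frac{2\pi}{3}\bigl(e_6-e_2e_4\bigr)$ and then writes ``$e_6>0$ and \dots\ $e_2<0$ and $e_4<0$ which shows $\frac{de_4}{d\theta}>0$''. This is precisely the step you flag as not immediate: since $e_2<0$ and $e_4<0$, the product $e_2e_4$ is \emph{positive}, and so $e_6-e_2e_4$ is a difference of two positive quantities whose sign is not determined by those three facts alone. The paper does not supply the missing comparison $e_6>e_2e_4$; it simply asserts the conclusion. Your instinct that something further is needed is correct, and your proposed route via $e_6^2=e_4^3-1728\,\delta$ to reduce to the inequality $1728\abs{\delta}\ge e_4^2\bigl(\abs{e_4}+e_2^2\bigr)$ is a reasonable way to attack it, though you would still need to finish that estimate (for instance by bounding $\abs{e_2}$ via the explicit formula $\frac{de_2}{d\theta}=-\frac{3}{2\pi}-\frac{\pi}{6}e_2^2+\frac{\pi}{6}e_4$ together with the bounds on $\abs{\delta}$ and $\abs{e_4}$ already available from Proposition~\ref{Prop: Bounds of delta on the arc}, or by an interval-arithmetic check). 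In short: the paper's approach and yours coincide everywhere except that you have correctly spotted a genuine lacuna in the sign argument for $\frac{de_4}{d\theta}$, and you propose to repair it where the paper does not.
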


\begin{proof}
    \begin{enumerate}[label = (\roman*)]
        \item $E_{6}$ has a unique zero in the fundamental domain, at $i = e^{\pi i/2}$. Therefore, $e_{6}$ is continuous and nonzero for all $\theta\in\left(\frac{\pi}{2},\frac{2\pi}{3}\right]$. Hence, it is enough to show that $e_{6}\pr{\frac{2\pi}{3}}>0$. As we previously saw, by \cite{Katayama}
        \[e_{6}\pr{\frac{2\pi}{3}}=e^{2\pi i}E_{6}\pr{\rho}=\frac{27\varpi'^{6}}{2\pi^{6}}>0.\]
        \item Since $e_4$ is negative, we have $\abs{E_{4}\pr{e^{i\theta}}} = -e_{4}\pr{\theta}$. Hence, it is enough to show $\frac{de_{4}}{d\theta}>0$. Indeed, recall the following Ramanujan's identity,
        \[\frac{1}{2\pi i}\frac{dE_{4}}{d\tau} =\frac{E_{2}E_{4}-E_{6}}{3}.\]
        Then, 
        \begin{align*}
            \frac{de_{4}}{d\theta}\pr{\theta} & = \frac{d}{d\theta}\pr{e^{2i\theta}E_{4}\pr{e^{i\theta}}}\\
            & = 2ie^{2i\theta}E_{4}\pr{e^{i\theta}}+e^{2i\theta}\frac{d}{d\theta}\pr{E_{4}\pr{e^{i\theta}}}\\
            & = 2ie_{4}\pr{\theta}-2\pi e^{2i\theta}e^{i\theta}\frac{dE_{4}}{d\tau}\pr{e^{i\theta}}\\
            & = 2ie_{4}\pr{\theta}-\frac{2\pi}{3}e^{2i\theta}e^{i\theta}\pr{E_{2}\pr{e^{i\theta}}E_{4}\pr{e^{i\theta}}-E_{6}\pr{e^{i\theta}}}\\
            & = 2ie_{4}\pr{\theta}-\frac{2\pi}{3}e_{4}\pr{\theta}\pr{e_{2}\pr{\theta}-\frac{3}{i\pi}} + \frac{2\pi}{3}e_{6}\pr{e^{i\theta}}\\
            & = \frac{2\pi}{3}\pr{e_{6}\pr{\theta}-e_{4}\pr{\theta}e_{2}\pr{\theta}}
        \end{align*}
        By (i), $e_6>0$ and as we saw in Lemma \ref{lemma: e_2,e_4, delta are negative}, $e_{2}<0$ and $e_{4}<0$ which shows $\frac{de_{4}}{d\theta}>0$, and thus $e_{4}$ is an increasing function. The bound \eqref{eq: An upper bound for E_4 arc 0.75} follows from a result which is due to Hurwitz \cite{Hurwitz},
        \[E_{4}\pr{i}=\frac{3\varpi^{4}}{\pi^{4}}.\]
        The bound \eqref{eq: An upper bound for E_4 arc 0.65} follows from a simple calculation: 
        \[e_{4}\pr{1.9}=-0.900253\ldots.\]
        \item Since $e_6$ is positive, we have $\abs{E_{6}\pr{e^{i\theta}}} = e_{6}\pr{\theta}$. Hence, it is enough to show $\frac{de_{6}}{d\theta}>0$. Indeed, recall the following Ramanujan's identity,
        \[\frac{1}{2\pi i}\frac{dE_{6}}{d\tau} =\frac{E_{2}E_{6}-E_{4}^{2}}{2}.\]
        Then, 
        \begin{align*}
            \frac{de_{6}}{d\theta}\pr{\theta} & = \frac{d}{d\theta}\pr{e^{3i\theta}E_{6}\pr{e^{i\theta}}}\\
            & = 3ie^{3i\theta}E_{4}\pr{e^{i\theta}}+e^{3i\theta}\frac{d}{d\theta}\pr{E_{6}\pr{e^{i\theta}}}\\
            & = 3ie_{6}\pr{\theta}-2\pi e^{3i\theta}e^{i\theta}\frac{dE_{6}}{d\tau}\pr{e^{i\theta}}\\
            & = 3ie_{6}\pr{\theta}-\pi e^{3i\theta}e^{i\theta}\pr{E_{2}\pr{e^{i\theta}}E_{6}\pr{e^{i\theta}}-E_{4}\pr{e^{i\theta}}^{2}}\\
            & = 3ie_{6}\pr{\theta}-\pi e_{6}\pr{\theta}\pr{e_{2}\pr{\theta}-\frac{3}{i\pi}} +\pi e_{4}\pr{e^{i\theta}}^{2}\\
            & = \pi\pr{e_{4}\pr{\theta}^{2}-e_{6}\pr{\theta}e_{2}\pr{\theta}}
        \end{align*}
        By (i), $e_6>0$ and as we saw in Lemma \ref{lemma: e_2,e_4, delta are negative}, $e_{2}<0$. Hence, $\frac{de_{6}}{d\theta}>0$, which shows that $e_{6}$ is an increasing function. The bound \eqref{eq: An upper bound for E_6 arc 0.75} follows from a simple calculation:
        \[e_6(1.9) = 1.980151\ldots.\]
        The bound \eqref{eq: An upper bound for E_6 arc 0.65}  
        follows from a generalization of Hurwitz's result, which is due to Katayama \cite{Katayama},
        \[E_{6}\pr{\rho}=\frac{27\varpi^{6}}{2\pi^{6}}.\qedhere\]
    \end{enumerate} 
\end{proof}
\subsection{Bounds for $E_4$}\label{E_4 bounds}
\begin{lemma}
    \begin{enumerate}[label = (\roman*)]
        \item For all $x\in\left[-0.5,0.5\right]$, 
        \begin{equation}\label{eq: An upper bound for |E_4(x+0.65i)|}
            \left|E_{4}\left(x+0.65i\right)\right| < 5.9.
        \end{equation}
        \item For all $x\in\left[-0.5,0.5\right]$, 
        \begin{equation}\label{eq: An upper bound for |E_4(x+0.75i)|}
            \left|E_{4}\left(x+0.75i\right)\right| < 3.45.
        \end{equation}
    \end{enumerate}
\end{lemma}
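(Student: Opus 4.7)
\subsection*{Plan for the proof of the bounds on $|E_4(x+0.65i)|$ and $|E_4(x+0.75i)|$}

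My approach is to reduce the estimate to a one-variable problem depending only on $y=\Im(\tau)$, by means of the triangle inequality applied termwise to the $q$-expansion
\[
E_4(\tau)=1+240\sum_{n=1}^{\infty}\sigma_3(n)q^n,\qquad q=e^{2\pi i\tau}.
\]
Since $|q|=e^{-2\pi y}$ does not depend on $x=\Re(\tau)$, for every $x\in[-1/2,1/2]$ we have the uniform bound
\[
\bigl|E_4(x+yi)\bigr|\;\le\;1+240\sum_{n=1}^{\infty}\sigma_3(n)\,e^{-2\pi n y}.
\]
Thus the task is simply to numerically bound the one-variable series at $y=0.65$ and $y=0.75$.

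To control the series efficiently I would split it as $\sum_{n=1}^{N}+\sum_{n>N}$ for a small explicit cutoff $N$ (say $N=4$ or $N=5$), evaluate the head term by term using the exact values $\sigma_3(1)=1,\sigma_3(2)=9,\sigma_3(3)=28,\sigma_3(4)=73,\ldots$, and dominate the tail by the elementary bound $\sigma_3(n)\le \zeta(3)\,n^3\le 2n^3$, which follows from $\sigma_3(n)/n^3=\sum_{d\mid n}d^{-3}\le\zeta(3)$. Using the closed form
\[
\sum_{n=1}^{\infty}n^3 x^n=\frac{x(x^2+4x+1)}{(1-x)^4},
\]
applied either directly (with $x=e^{-2\pi y}$) or to the tail after subtracting the first $N$ terms, yields an explicit numerical majorant.

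Carrying out the arithmetic with $y=0.65$ (so $|q|=e^{-1.3\pi}\approx 0.01685$) gives a partial sum $240\sum_{n=1}^{4}\sigma_3(n)|q|^n\approx 4.69$ with a negligible tail, so the bound $|E_4(x+0.65i)|<5.9$ follows with comfortable slack. The same recipe with $y=0.75$ (so $|q|=e^{-1.5\pi}\approx 0.00898$) gives $240\sum_{n=1}^{3}\sigma_3(n)|q|^n\approx 2.33$, comfortably below $2.45$, hence $|E_4(x+0.75i)|<3.45$.

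I do not anticipate a genuine obstacle: the step that requires the most care is only checking that the chosen cutoff $N$ is large enough so that the geometric-type tail bound leaves enough slack against the target constants $5.9$ and $3.45$. Since $|q|$ is tiny at both heights and the required slack is on the order of $10^{-1}$, taking $N=4$ uniformly is more than sufficient, and the entire argument reduces to elementary, verifiable numerical estimates.
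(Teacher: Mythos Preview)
Your proposal is correct and follows essentially the same route as the paper: apply the triangle inequality termwise to the $q$-expansion so that the bound depends only on $|q|=e^{-2\pi y}$, evaluate the first few terms explicitly, and control the tail by an elementary majorant for $\sigma_3(n)$. The only cosmetic differences are that the paper truncates at $N=2$ and uses $\sigma_3(n)\le n^4$ together with the trick $n^4 e^{-\pi y n}\le C$ to reduce the tail to a geometric series, whereas you truncate at $N=4$ and use the sharper $\sigma_3(n)\le \zeta(3)n^3$ with the closed form for $\sum n^3 x^n$; both yield the required constants with room to spare.
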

\begin{proof}
\begin{enumerate}[label = (\roman*)]
\item \emph{An upper bound for $|E_{4}(x+0.65i)|$:}
For all $n\ge 3$ we have $n^{4}e^{-\frac{13\pi}{20}n}\le \frac{3}{10}$, and thus
\begin{multline*}
      \left|\left|E_{4}\left(x+0.65i\right)\right|-\left|1+240e^{-\frac{13\pi}{10}}e^{2\pi ix}+2160e^{-\frac{13\pi}{5}}e^{4\pi ix}\right|\right| \le 240\sum_{n=3}^{\infty}\sigma_{3}(n)e^{-\frac{13\pi}{10}n} \\ \le 240\sum_{n=3}^{\infty}n^4e^{-\frac{13\pi}{10}n} \le 240\cdot\frac{3}{10}\sum_{n=3}^{\infty}e^{-\frac{13\pi}{20}n} = 72\frac{e^{-\frac{39\pi}{20}}}{1-e^{-\frac{13\pi}{20}}}<\frac{1}{5}=0.2.
\end{multline*}
Now, we have 
\begin{equation*}
    \left|1+240e^{-\frac{13\pi}{10}}e^{2\pi ix}+2160e^{-\frac{13\pi}{5}}e^{4\pi ix}\right| \le 1+240e^{-\frac{13\pi}{10}}+2160e^{-\frac{13\pi}{5}} < 5.7
\end{equation*}
Hence,
\begin{equation}
    \left|E_{4}\left(x+0.65i\right)\right| \le 5.9.\tag{\ref{eq: An upper bound for |E_4(x+0.65i)|}}
\end{equation}

\item \emph{An upper bound for $|E_{4}(x+0.75i)|$:}
For all $n\ge 3$ we have $n^{4}e^{-\frac{3\pi}{4}}\le \frac{1}{5}$, and thus
\begin{multline*}
      \left|\left|E_{4}\left(x+0.75i\right)\right|-\left|1+240e^{-\frac{2\pi}{3}}e^{2\pi ix}+2160e^{-3\pi ix}\right|\right| \le 240\sum_{n=3}^{\infty}\sigma_{3}(n)e^{-\frac{2\pi}{3}n} \\ \le 240\sum_{n=7}^{\infty}n^4e^{-\frac{2\pi}{3}n} \le 240\cdot\frac{1}{5}\sum_{n=3}^{\infty}e^{-\frac{3\pi}{4}n} = 48\frac{e^{-\frac{9\pi}{4}}}{1-e^{-\frac{3\pi}{4}}}<\frac{1}{20}=0.05.
\end{multline*}
Now, we have 
\begin{equation*}
    \left|1+240e^{-\frac{2\pi}{3}}e^{2\pi ix}+2160e^{-3\pi}e^{4\pi ix}\right| \le 1+ 240e^{-\frac{2\pi}{3}}+2160e^{-3\pi} < 3.4
\end{equation*}
Hence,
\begin{equation}
    \left|E_{4}\left(x+0.75i\right)\right| < 3.45. \tag{\ref{eq: An upper bound for |E_4(x+0.75i)|}}\qedhere
\end{equation}
\end{enumerate}
\end{proof}

\subsection{Bounds for $E_6$}\label{E_6 bounds}
\begin{lemma}
    \begin{enumerate}[label = (\roman*)]
        \item For all $x\in\left[-0.5,0.5\right]$, 
        \begin{equation}\label{eq: An upper bound for |E_6(x+0.65i)|}
            \left|E_{6}\left(x+0.65i\right)\right| < 14.26.
        \end{equation}
        \item For all $x\in\left[-0.5,0.5\right]$, 
        \begin{equation}\label{eq: An upper bound for |E_6(x+0.75i)|}
            \left|E_{6}\left(x+0.75i\right)\right| < 5.25.
        \end{equation}
    \end{enumerate}
\end{lemma}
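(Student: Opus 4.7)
The plan mirrors the proof of the preceding lemma for $E_4$. Starting from the Fourier expansion
\[E_6(\tau) = 1 - 504\sum_{n=1}^{\infty}\sigma_5(n)q^n, \qquad q=e^{2\pi i\tau},\]
which has real Fourier coefficients, for $y\in\{0.65,0.75\}$ and $N\ge 1$ I would split $E_6(x+yi)=S_N(x)+R_N(x)$ with
\[S_N(x)=1-504\sum_{n=1}^{N}\sigma_5(n)e^{-2\pi yn}e^{2\pi inx}\]
the initial partial sum and $R_N(x)$ the tail. The reverse triangle inequality then shows that $|E_6(x+yi)|$ differs from $|S_N(x)|$ by at most $|R_N(x)|$, reducing the lemma to bounding $|R_N(x)|$ and $\max_x|S_N(x)|$ separately.

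For the tail I would follow the $E_4$ argument essentially verbatim: use $\sigma_5(n)\le\zeta(5)n^5<n^6$, observe that $n^6 e^{-\alpha n}$ is decreasing once $n>6/\alpha$, pull out a factor $e^{-cn}$ with $c$ chosen so that $n^6 e^{-cn}$ is bounded by a small absolute constant at $n=N+1$, and close with a geometric sum. The resulting estimate is entirely analogous to the $<1/5$ bound obtained for the $E_4$ tail at $y=0.65$, with $(\sigma_3,240,n^4)$ replaced by $(\sigma_5,504,n^6)$; at $y=0.75$ the same recipe with $1.3\pi$ replaced by $1.5\pi$ produces an even sharper tail.

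For the leading partial sum the situation departs from the $E_4$ case in an essential way, because every non-constant Fourier coefficient of $E_6$ is negative. At $y=0.65$ with $N=2$ the naive triangle inequality gives
\[|S_2(x)|\le 1+504e^{-1.3\pi}+504\,\sigma_5(2)\,e^{-2.6\pi}\approx 14.20,\]
uncomfortably close to the target $14.26$ once any tail contribution is added. To close the gap I would maximize $|S_N(x)|^2$ directly: writing $c=\cos(2\pi x)$ and using $\cos(n\cdot 2\pi x)=T_n(c)$, the squared modulus becomes a polynomial in $c\in[-1,1]$ whose maximum can be read off either by elementary calculus for small $N$ or by the Chebyshev/Goursat transform procedure employed later in the paper in the analysis of $|g(x)|^2$. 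This should yield a strict improvement over $14.20$, leaving the necessary room for the tail.

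The main obstacle is precisely this numerical tightness at $y=0.65$: unlike in the $E_4$ case, the bound cannot be obtained by a bare triangle inequality on the partial sum, so one must retain the exact values $\sigma_5(1)=1,\sigma_5(2)=33,\sigma_5(3)=244$ throughout and execute the modulus maximization carefully. At $y=0.75$, by contrast, the coefficient $504e^{-3\pi/2}\approx 4.43$ is already comfortably under $5$, and two leading terms plus a small tail suffice to give a bound below $5.25$, in direct parallel with the $E_4$ computation at the same height.
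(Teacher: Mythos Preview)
Your overall plan---split $E_6(x+yi)$ into a short partial sum $S_N$ plus a tail, bound the tail crudely, and bound $\max_x|S_N(x)|$ more carefully---is exactly the paper's strategy. For part (i) your proposed Chebyshev/Goursat maximization of $|S_N|^2$ is equivalent to what the paper does: with $N=2$, writing $S_2(x)=1-c_1e^{2\pi ix}-c_2e^{4\pi ix}$ (so $c_1=504e^{-13\pi/10}\approx 8.49$, $c_2=16632e^{-13\pi/5}\approx 4.72$), one finds that $|S_2(e^{i\phi})|^2$, as a quadratic in $u=\cos\phi$, is increasing on $[-1,1]$ and hence attains its maximum at $u=1$, giving $|S_2|\le |1-c_1-c_2|\approx 12.21$. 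The paper reaches the same number by the algebraic identity $1-c_1 z-c_2 z^2=(1-\tfrac{c_1}{2}z)^2-(c_2+\tfrac{c_1^2}{4})z^2$ and then asserts (without justification) that the modulus is maximized at $z=1$; your Chebyshev argument is actually what supplies that justification. So on (i) you and the paper agree.

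There is, however, a genuine gap in your treatment of part (ii). You assert that at $y=0.75$ ``two leading terms plus a small tail suffice'' because $504e^{-3\pi/2}\approx 4.43$ is under $5$. This does not close: even accepting your numerical value, the triangle inequality on the two leading terms already gives
\[
|S_1(x)|\le 1+504e^{-3\pi/2}\approx 5.53,
\]
(the maximum is at $x=\tfrac12$, not $x=0$, since $e^{2\pi ix}$ ranges over the whole unit circle), and any nonnegative tail pushes you further above $5.25$. Taking $N=2$ does not help either: the naive bound $1+504e^{-3\pi/2}+16632e^{-3\pi}\approx 6.87$ is worse still. The paper faces the same obstruction at $y=0.75$ and resolves it by the \emph{same} completing-the-square device as in (i), obtaining $\max_x|S_2(x)|=|1-c_1-c_2|\approx 4.87$ together with a tail $<0.35$. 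So your diagnosis that only the $y=0.65$ case requires the sharper modulus maximization is incorrect; you must carry out the $|S_N|^2$ maximization at $y=0.75$ as well.
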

\begin{proof}
\begin{enumerate}[label = (\roman*)]
\item \emph{An upper bound for $|E_{6}(x+0.65i)|$:}
For all $n\ge 3$ we have $n^{6}e^{-\frac{13\pi}{20}n}\le \frac{8}{5}$, and thus
\begin{multline*}
      \left|\left|E_{6}\left(x+0.65i\right)\right|-\left|1-504e^{-\frac{13\pi}{10}}e^{2\pi ix}-16632e^{-\frac{13\pi}{5}}e^{4\pi ix}\right|\right| \le 504\sum_{n=3}^{\infty}\sigma_{6}(n)e^{-\frac{13\pi}{10}n} \\ \le 504\sum_{n=3}^{\infty}n^6 e^{-\frac{13\pi}{10}n} \le 504\cdot\frac{8}{5}\sum_{n=3}^{\infty}e^{-\frac{13\pi}{20}n} = \frac{4032}{5}\frac{e^{-\frac{39\pi}{20}}}{1-e^{-\frac{13\pi}{20}}}<2.05.
\end{multline*}
Now, we have 
\begin{multline*}
    \left|1-504e^{-\frac{13\pi}{10}}e^{2\pi ix}-16632e^{-\frac{13\pi}{5}}e^{4\pi ix}\right| = \left|\left(1-252e^{-\frac{13\pi}{10}}e^{2\pi ix}\right)^{2}-80136e^{-\frac{13\pi}{5}}e^{4\pi ix}\right|  \\ \le \abs{\left(1-252e^{-\frac{13\pi}{10}}\right)^{2}-80136e^{-\frac{13\pi}{5}}} < 12.21.
\end{multline*}
Hence,
\begin{equation}
    \left|E_{6}\left(x+0.65i\right)\right| \le 14.26. \tag{\ref{eq: An upper bound for |E_6(x+0.65i)|}}
\end{equation}

\item \emph{An upper bound for $|E_{6}(x+0.75i)|$:}
For all $n\ge 1$ we have $n^{6}e^{-\frac{3\pi}{4}n}\le \frac{7}{10}$, and thus
\begin{multline*}
      \left|\left|E_{6}\left(x+0.75i\right)\right|-\left|1-504e^{-\frac{2\pi}{3}}e^{2\pi ix}-16632e^{-3\pi}e^{4\pi ix}\right|\right| \le 504\sum_{n=3}^{\infty}\sigma_{6}(n)e^{-\frac{2\pi}{3}n} \\ \le 504\sum_{n=3}^{\infty}n^6 e^{-\frac{2\pi}{3}n} \le 504\cdot\frac{7}{10}\sum_{n=3}^{\infty}e^{-\frac{3\pi}{4}n} = \frac{4032}{5}\frac{e^{-\frac{9\pi}{4}}}{1-e^{-\frac{3\pi}{4}}}<0.35.
\end{multline*}
Now, we have 
\begin{multline*}
    \left|1-504e^{-\frac{2\pi}{3}}e^{2\pi ix}-16632e^{-3\pi}e^{4\pi ix}\right| = \left|\left(1-252e^{-\frac{2\pi}{3}}e^{2\pi ix}\right)^{2}-80136e^{-3\pi}e^{4\pi ix}\right|  \\ \le \abs{\left(1-252e^{-\frac{2\pi}{3}}\right)^{2}-80136e^{-3\pi}} < 4.9
\end{multline*}
Hence,
\begin{equation}
    \left|E_{6}\left(x+0.75i\right)\right| < 5.25. \tag{\ref{eq: An upper bound for |E_6(x+0.75i)|}}\qedhere
\end{equation}
\end{enumerate}
\end{proof}

\subsection{Proof of Theorem \ref{QuanMR}} We begin with bounding the expression
\[ \max_{\abs{x}\le\frac{1}{2}}\abs{\frac{E_{k'}\pr{e^{i\theta}}E_{14-k'}\pr{x+ai}}{\Delta\pr{x+ai}\pr{j\pr{x+ai}-j\pr{e^{i\theta}}}}},\] for $a=0.75$ and $a=0.65$ and all six different choices of $k'$. Recall the identities, 
\begin{align*}
     E_{8} & = E_{4}^2 ,\\
     E_{10} & = E_{4}E_{6}, \\
     E_{14} & = E_{4}^{2}E_{6}.
\end{align*}
For all $\theta\in\prb{\frac{\pi}{2},\frac{2\pi}{3}}$, define
\[H_{k',a}\pr{\theta} := \max_{\abs{x}\le\frac{1}{2}}\abs{\frac{E_{k'}\pr{e^{i\theta}}E_{14-k'}\pr{x+ai}}{\Delta\pr{x+ai}\pr{j\pr{x+ai}-j\pr{e^{i\theta}}}}},\]
where $a>0$, and $k'\in\prs{0,4,6,8,10,14}$.
\begin{enumerate}[label= Case \Roman*.]
    \item $y=0.75$ and $\theta \in \left[\frac{\pi}{2},1.9\right]$:\\
    By Proposition \ref{tau bounds} we get $\abs{\Delta(x+0.75i)}>0.007$. Now, 
    \begin{enumerate}[label = \roman*.]
        \item[For] $k'=0$: 
        \begin{equation*}
            H_{0,0.75}\pr{\theta} 
            \le \frac{5.25 \cdot(3.45)^2}{0.007\cdot176}<  51.31.
        \end{equation*}
        \item[For] $k'=4$:
        \begin{equation*}
            H_{4,0.75}\pr{\theta}\le \frac{1.46\cdot5.25 \cdot 3.45}{0.007\cdot176} < 21.72.
        \end{equation*}
        \item[For] $k'=6$:
        \begin{equation*}
            H_{6,0.75}\pr{\theta} \le \frac{1.99 \cdot (3.45)^2}{0.007\cdot176}<19.5.
        \end{equation*}

        \item[For] $k'=8$:
        \begin{equation*}
            H_{8,0.75}\pr{\theta} \le \frac{(1.46)^{2} \cdot 5.25}{0.007\cdot176} < 9.2.
        \end{equation*}
        
        \item[For] $k'=10$:
        \begin{equation*}
            H_{10,0.75}\pr{\theta} \le \frac{1.99\cdot 1.46 \cdot3.45}{0.007\cdot176} < 8.3.
        \end{equation*}
        
        \item[For] $k'=14$:
        \begin{equation*}
            H_{14,0.75}\pr{\theta} \le \frac{1.99\cdot(1.46)^{2}}{0.007\cdot176} < 3.5.
        \end{equation*}
    \end{enumerate}
    Therefore, for any $k'\in\{0,4,6,8,10,14\}$,
    \begin{equation}
    \max_{\abs{x}\le\frac{1}{2}}\left|\frac{E_{k'}(e^{i\theta})E_{14-k'}(x+0.75i)}{\Delta(x+0.75i)\left(j(x+0.75i)-j(e^{i\theta})\right)}\right| < e^{3.94}.
    \end{equation}
    Hence, we got $B_{1}=3.94$.\\
    
    \item $y=0.65$ and $\theta \in \left[1.9,\frac{2\pi}{3}\right]$:
    By Proposition \ref{tau bounds} we get $\abs{\Delta(x+0.65i)}>0.01$. Now,
    \begin{enumerate}[label = \roman*.]
        \item[For] $k'=0$:
        \begin{equation*}
             H_{0,0.65}\pr{\theta}\le \frac{14.26\cdot36}{311\cdot0.01}<166.7.
        \end{equation*}
        
        \item[For] $k'=4$:
        \begin{equation*}
            H_{4,0.65}\pr{\theta} \le \frac{0.9022\cdot14.26\cdot 6}{311\cdot0.01} <25.1.
        \end{equation*}

        \item[For] $k'=6$:
        \begin{equation*}
            H_{6,0.65}\pr{\theta}\le \frac{2.89\cdot 36}{311\cdot0.01} < 33.78.
        \end{equation*}

        \item[For] $k'=8$:
        \begin{equation*}
            H_{8,0.65}\pr{\theta} \le \frac{0.9022^{2} \cdot 14.26}{311\cdot0.01}<3.8.
        \end{equation*}
        
        \item[For] $k'=10$:
        \begin{equation*}
            H_{10,0.65}\pr{\theta} \le \frac{2.89\cdot 0.9022 \cdot6}{311\cdot0.01} < 5.08. 
        \end{equation*}
        
        \item[For] $k'=14$:
        \begin{equation*}
           H_{14,0.65}\pr{\theta}\le \frac{2.89\cdot0.9022^{2}}{311\cdot0.01} < 1.
        \end{equation*}
    \end{enumerate}
    Therefore, for any $k'\in\{0,4,6,8,10,14\}$,
    \begin{equation}
    \max_{\abs{x}\le\frac{1}{2}}\left|\frac{E_{k'}(e^{i\theta})E_{14-k'}(x+0.65i)}{\Delta(x+0.65i)\left(j(x+0.65i)-j(e^{i\theta})\right)}\right| < e^{5.12}.
    \end{equation}
    Hence, we got $B_{2}=5.12$.
\end{enumerate}
Finally, substituting $B_{1}=3.94$ and $B_{2}=5.12$ in \eqref{c_{2}} we obtain
\begin{multline*}
    c_{2}=\max\pr{\frac{3.94-\log\pr{1.995}}{\log\pr{10/7}},\frac{5.12-\log\pr{0.995}}{\log\pr{2}}}\\=\frac{4.04-\log\pr{1.995}}{\log\pr{10/7}}=9.11013\ldots \le 9.5,
\end{multline*}
which proves Theorem \ref{QuanMR}.
\section{Proof of Theorem \ref{the case m=1}}\label{m=1}
In this section, we investigate the zeros of the first element of the Miller basis, i.e.\ $g_{k,1}$. Using Theorem \ref{QuanMR}, we conclude that for any $k=12\ell+k'$ with $\ell> 14$, the form $g_{k,1}$ has all of its zeros in the fundamental domain on the arc $\cA$. Therefore, we need to show that the zeros of the forms $g_{k,1}$ such that $\ell \le 14$ are all on the arc $\cA$. The proof goes through Faber polynomials:

\subsection{Faber Polynomials}\label{FaberPoly}
For any nonzero modular form $f\in M_{k}$, we associate a polynomial $F_{f}\in \C[x]$ of degree $\ell-\ord_{\infty}\pr{f}$ such that $f=\Delta^{\ell}E_{k'}F_{f}(j)$, where $k=12\ell+k'$ and $k'\in\{0,4,6,8,10,14\}$, as before.
The polynomial $F_{f}$ is uniquely determined by $f$ and is called the \emph{Faber polynomial of $f$}. The valence formula \eqref{valence} implies that $f$ attains the zeros of $E_{k'}$; we call these zeros trivial. The roots of $F_{f}$ account for all the nontrivial zeros of $f$, i.e.\ for any $\tau\not\sim i,\rho$, we have $f(\tau)=0$ if and only if $F_{f}(j(\tau))=0$. Together with the fact that $j$ is injective and maps the arc $\cA=\left\{e^{i\theta}:\frac{\pi}{2}\le \theta\le \frac{2\pi}{3}\right\}$ onto the interval $[0,1728]$, we get that a form $f$ has all its zeros in the fundamental domain on the arc $\cA$ if and only if $F_{f}$ has all of its roots in the interval $[0,1728]$.

\begin{remark}
    Given some modular form $f$, it is not hard to compute its Faber polynomial (for more information see \S3 in \cite{rudnick2023}).
\end{remark}  

Hence, it is sufficient to consider the Faber polynomials of the forms $g_{k,1}$ and compute their roots.
\subsection{Some Examples}

\subsubsection{$k=48$}
The Faber polynomial of $g_{48,1}$ is:
\[F_{48,1}(t) = t^{3} -2136t^{2} + 931860t -24903328,\]
and its roots are $28.5703,\, 565.1814,\, 1542.2483 \in [0,1728]$. See Figure \ref{fig:7}.\\

\begin{figure}[ht]
    \begin{center}
    \includegraphics[height=65mm]{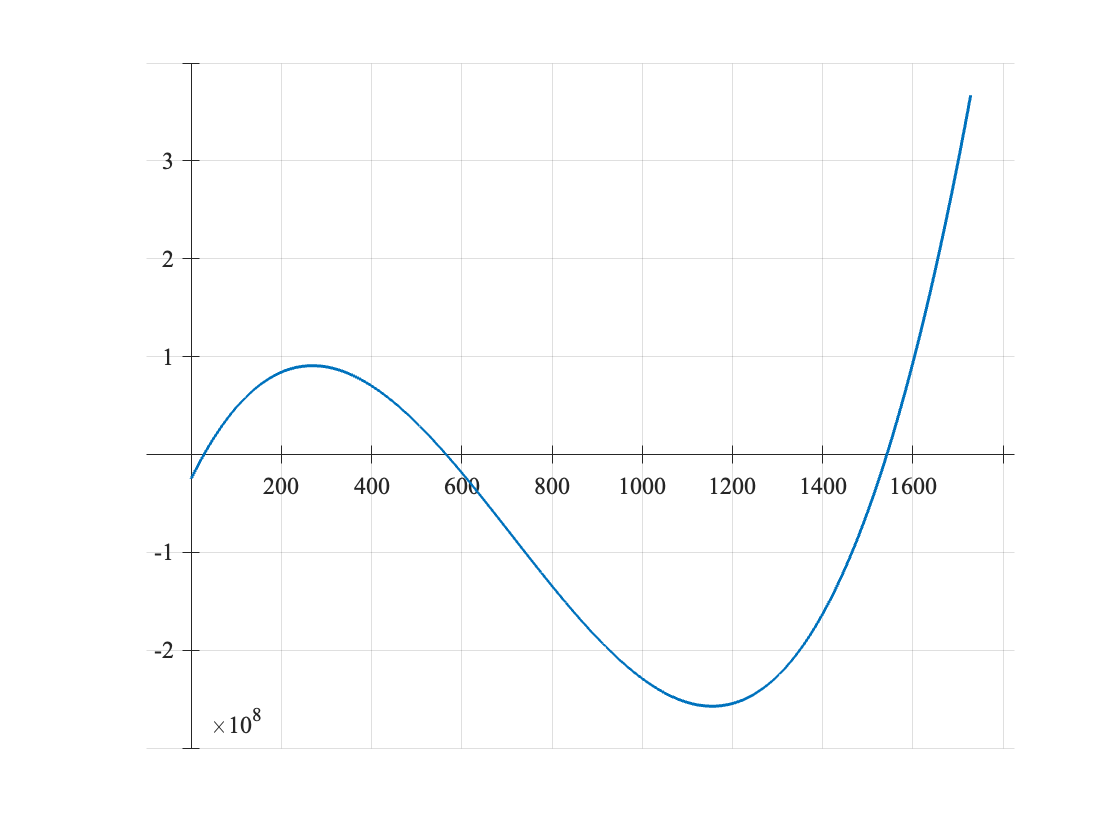}
    \caption{$F_{48,1}$ on the interval $[0,1728]$.}
     \label{fig:7}
    \end{center}
\end{figure}
\subsubsection{$k=124$}
The Faber polynomial of $g_{124,1}$ is:
\begin{multline*}
    F_{124,1}(t) = t^{9} -6696t^{8} + 18182340t^{7} -25703594848t^{6}\\+ 20207360640402t^{5}-8750844530401680t^{4}\\+1942806055074346280t^{3} -188671766710386398400t^{2} \\+ 5718177043459037019855t^{2} -21437679033112542689512.
\end{multline*}
Its roots are $4.3445$, $44.3322$, $153.6441$, $ 350.0448$, $628.6821$, $959.1844$, $1289.5802$,
$1557.8272$,  $1708.3603\in [0,1728]$. See Figures \ref{fig:8}, \ref{fig:9}.\\

\begin{figure}[ht]
    \begin{center}
    \includegraphics[height=65mm]{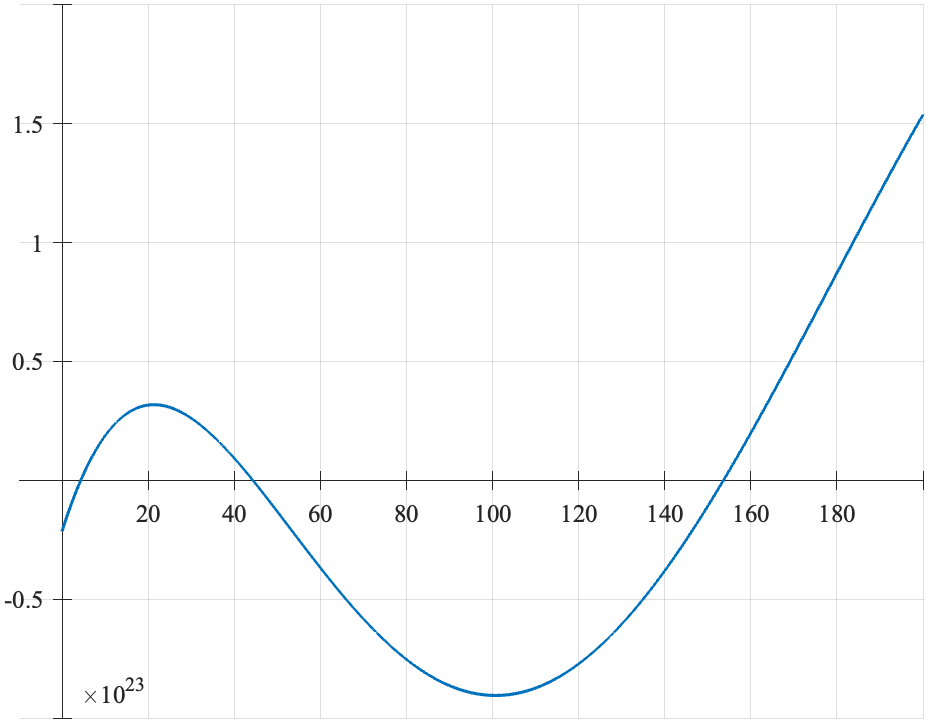}
    \caption{$F_{124,1}$ on the interval $[0,200]$.}
     \label{fig:8}
    \end{center}
\end{figure} 
\begin{figure}[ht]
    \begin{center}
    \includegraphics[height=70mm]{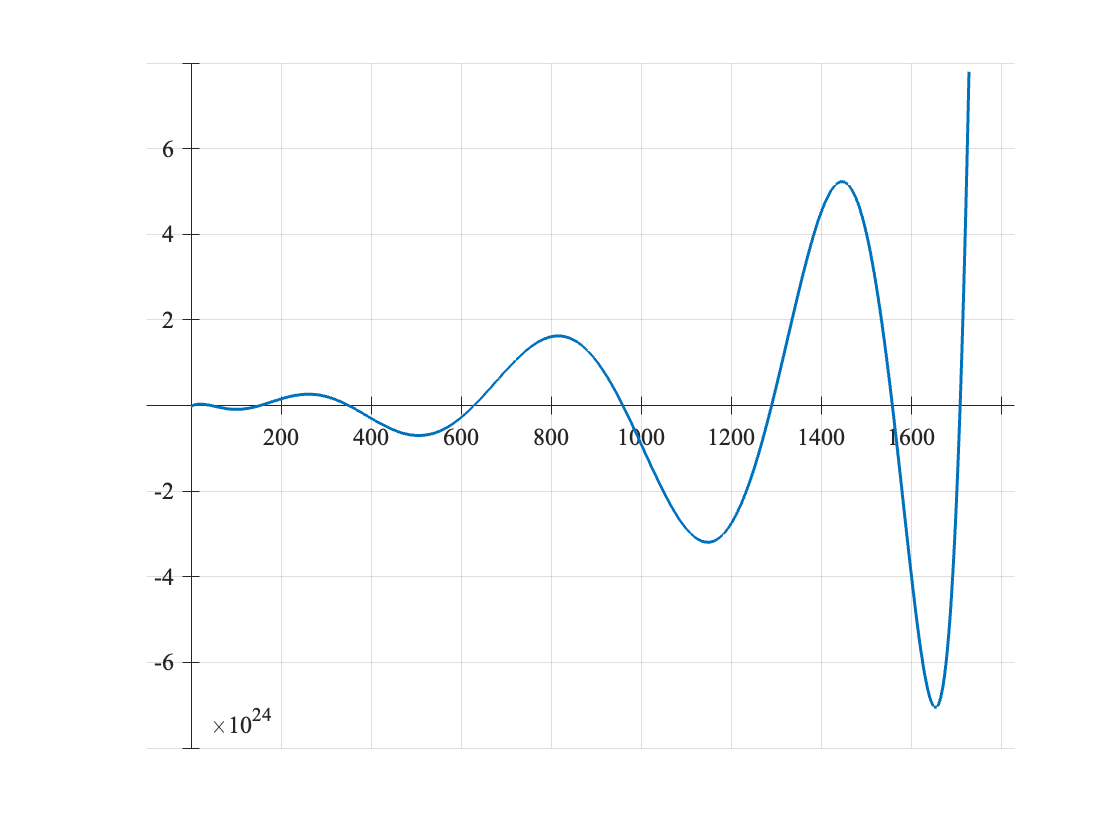}
    \caption{$F_{124,1}$ on the interval $[0,1728]$.}
     \label{fig:9}
    \end{center}
\end{figure}

Using this method, numerically computing the roots of the polynomial $F_{k,1}$ with $1\le\ell \le 14$, we can verify that for any even integer $k>14$, with $1\le\ell\le 14$ the zeros in the fundamental domain of $g_{k,1}$ are all on the arc $\cA$, which concludes the proof of Theorem \ref{the case m=1}.
\section*{Statements and Declarations}
\subsection*{Acknowledgments}
The author thanks Alon Nishry, Ze{\'e}v Rudnick, and the anonymous referees for their helpful comments. 
This research was conducted as part of the author's M.Sc.\ thesis at Tel-Aviv University under the advisement of Prof.\ Ze{\'e}v Rudnick, and supported by the European Research Council (ERC) under the European Union's Horizon 2020 research and innovation program (grant agreement No. 786758).
\subsection*{Data Availability}
We do not analyze or generate any datasets, because our work proceeds within a theoretical and mathematical approach.
\subsection*{Competing Interests}
The author declares no competing interests.

\bibliographystyle{plain}
\bibliography{Refs.bib}

\end{document}